\numberwithin{equation}{section}
\DeclareRobustCommand{\SkipTocEntry}[5]{}
\def\llra{\hbox to 10mm{\rightarrowfill}}
\def\lllra{\hbox to 15mm{\rightarrowfill}}
\def\wA{{\widetilde A}}
\def\wS{{\widetilde S}}
\def\wX{{\widetilde X}}
\def\wY{{\widetilde Y}}
\def\cF{\mathcal{F}}
\def\cO{\mathcal{O}}
\def\cM{\mathcal{M}}
\def\cQ{\mathcal{Q}}
\let\tilde\widetilde
\DeclareMathOperator{\codim}{codim}
\DeclareMathOperator{\Pic}{Pic}
\DeclareMathOperator{\Exc}{Exc}
\DeclareMathOperator{\id}{id}
\DeclareMathOperator{\Aut}{Aut}
\DeclareMathOperator{\vol}{vol}
\newtheorem{lem}{Lemma}[section]
\newtheorem{thm}[lem]{Theorem}
\newtheorem{cor}[lem]{Corollary}
\newtheorem{prop}[lem]{Proposition}
\newtheorem{claim}[lem]{Claim}
\theoremstyle{definition}
\newtheorem{rema}[lem]{Remark}
\newtheorem{rmk}[lem]{Remark}
\newtheorem{qu}[lem]{Question}
\theoremstyle{remark}
\newtheorem*{remark*}{Remark}
\newtheorem*{note*}{Note}
\newcommand{\fP}{\mathfrak{P}}
\newcommand{\fV}{\mathfrak{V}}
\newcommand{\mc}{\mathcal{C}}
\newcommand{\mo}{\mathcal{O}}
\newcommand{\CC}{\mathbb{C}}
\newcommand{\PP}{\mathbb{P}}
\newcommand{\QQ}{\mathbb{Q}}
\newcommand{\RR}{\mathbb{R}}
\newcommand{\ZZ}{\mathbb{Z}}
\newcommand{\et}{\text{\'et}}
\newcommand{\Mov}{\mathrm{Mov}}
\newcommand{\rk}{\mathrm{rk}}
\begin{document}
	\title[numerically trivial automorphisms of threefolds]{On numerically trivial automorphisms of threefolds of general type}
	\author{Zhi Jiang}
	\address{Zhi Jiang \\ Shanghai Center for Mathematical Sciences \\ Xingjiangwan Campus, Fudan University \\ Shanghai 200438, China}
	\email{zhijiang@fudan.edu.cn}
	\author{Wenfei Liu}
	\address{Wenfei Liu \\ School of Mathematical Sciences \\ Xiamen University \\ Siming South Road 422, Xiamen, Fujian 361005, China}
	\email{wliu@xmu.edu.cn}
	\author{Hang Zhao}
	\address{ Hang Zhao\\Department of Mathematics \\ Southern University of Science and Technology \\ No 1088, Xueyuan Rd., Xili, Nanshan District, Shenzhen, Guangdong, China}
	\email{zhaoh3@sustech.edu.cn}
	\date{\today}
	\subjclass[2010]{14J50, 14J30}
	\keywords{numerically trivial automorphism, threefold of general type}
	\maketitle
	\begin{abstract}
		In this paper, we prove that the group $\Aut_\QQ(X)$ of numerically trivial automorphisms are uniformly bounded for \emph{smooth} projective threefolds $X$ of general type which either satisfy $q(X)\geq 3$ or have a Gorenstein minimal model. If $X$ is furthermore of maximal Albanese dimension, then $|\Aut_\QQ(X)|\leq 4$, and the equality can be achieved by an unbounded family of threefolds previously constructed by the third author. Along the way we prove a Noether type inequality for log canonical pairs of general type with the coefficients of the boundary divisor from a given subset $\mc\subset (0,1]$ such that $\mc\cup\{1\}$ attains the minimum.
	\end{abstract}
	\tableofcontents
	\section{Introduction}
	In studying automorphism groups of projective varieties, it is a basic problem to understand its induced action on the cohomology groups, say, with rational coefficients. More explicitly, for a complex smooth projective variety $X$, the action of the automorphism group $\Aut(X)$ on the cohomology with rational cohomology is the group homomorphism $\psi\colon \mathrm{Aut}(X)\rightarrow \Aut(H^*(X,\QQ))$ such that $\psi(\sigma) = (\sigma^{-1})^*$ for any $\sigma\in\Aut(X)$. We denote $\ker(\psi)$ by $\Aut_\QQ(X)$ and call its elements the \emph{numerically trivial automorphisms} of $X$. In this paper, we are interested in the size of $\Aut_\QQ(X)$.
	
	Obviously, the identity component $\Aut_0(X)$ of $\Aut(X)$ is contained in $\Aut_{\QQ}(X)$. It is proved by Lieberman \cite{Lie78} that the index $[\Aut_{\QQ}(X): \Aut_0(X)]$ is finite. The question is how large $[\Aut_{\QQ}(X): \Aut_0(X)]$ can be.
	
	If $\dim X=1$, then $X$ is a curve and $\Aut_{\QQ}(X)=\Aut_0(X)$ holds by the explicit description of $\Aut_0(X)$ and an easy application of the Riemann--Hurwitz formula.
	
	From dimension two on, the situation becomes very delicate. It is part of the package of the Torelli theorem that $\Aut(X)$ acts faithfully on $H^*(X, \QQ)$ (or equivalently, on $H^2(X, \QQ)$) for K3 surfaces \cite{BuR75, PS71}.\footnote{For K3 surfaces in positive characteristics, $\Aut(X)$ acts faithfully on the second $l$-adic \'etale cohomology group $H^2_{\et}(X, \QQ_l)$ by \cite[Theorem~1.4]{Ke16}.} In relation to the Torelli type problem
	in arbitrary dimension, $\Aut_\QQ(X)$ is shown to be trivial for the following classes of varieties: generalized Kummer manifolds (\cite{O20}), smooth cubic threefolds and fourfolds as well as their Fano varieties of lines (\cite{Pan20}), and complete intersections $X\subset \PP^n$ such that $\deg X\geq 3$, $\dim X\geq 2$ and $X$ is not of type $(2,2)$ (\cite{CPZ17}).
	
	However, examples of elliptic surfaces with nontrivial $\Aut_\QQ(X)$ were found around 1980 by several authors (\cite{Pe79, Pe80, BarPe83, D84, MN84}). Mukai--Namikawa \cite{MN84} obtained the optimal inequality $|\Aut_\QQ(X)|\leq 4$ for Enriques surfaces.\footnote{We refer to \cite{D13, DM19, DM21} for numerically trivial automorphisms of Enriques surfaces in positive characteristics.} By a recent work of Catanese and the second author (\cite{CatLiu20}), $[\mathrm{Aut}_{\QQ}(X):\Aut_0(X)]\leq 12$ holds for surfaces with Kodaira dimension
	0, but the index can be arbitrarily large for the class of smooth projective surfaces with Kodaira dimension $-\infty$ and $1$.
	
	If $X$ is a surface of general type, as for all smooth projective varieties of general type, the identity component $\Aut_0(X)$ is trvial, and hence $[\mathrm{Aut}_{\QQ}(X):\Aut_0(X)]= |\Aut_{\QQ}(X)|.$ Some restrictions on the invariants of $X$ with nontrivial $\Aut_{\QQ}(X)$ were found by Peters \cite{Pe80} under the condition that the canonical linear system $|K_X|$ is base point free. Cai	 \cite{Cai04} used the Bogomolov--Miyaoka--Yau inequality to show that $|\Aut_\QQ(X)|$ is uniformly bounded; in fact, $|\Aut(X)|\leq 4$ as soon as $\chi(X, \mo_X)\geq 189$. For irregular surfaces of general type, the results are quite complete by now:
	\begin{enumerate}
		\item  If $q(X)\geq 3$, then $\Aut_\QQ(X)$ is trivial (\cite{Cai12a, CLZ13}).
		\item If $q(X)=2$, then $|\Aut_\QQ(X)|\leq 2$, and there is a complete classification of the equality case: they are certain surfaces isogenous to a product of curves of unmixed type (\cite{Cai12b, CLZ13, Liu18}).
		\item If $q(X)=1$ then $|\Aut_\QQ(X)|\leq 4$, and again equality holds only if $X$ is isogenous to a product of curves of unmixed type. Moreover, there are infinite serie of such surfaces with $\Aut_\QQ(X)=4$ (\cite{CL18}).
	\end{enumerate}
	For regular surfaces of general type, it follows from \cite{Cai06} that $\Aut_\QQ(X)$ is trivial if $X$ has a genus $2$ fibration and $\chi(X, \mo_X)\geq 5$.
	
	Not much is known about $\Aut_\QQ(X)$ for higher dimensional general type varieties.  Unlike the surface case, one easily constructs smooth threefolds of general type with $\Aut_{\QQ}(X)$ nontrivial and $q(X)$ arbitrarily large: Just take $X=S\times C$, where $S$ is a smooth projective surface of general type with nontrivial $\Aut_\QQ(S)$ and $C$ be a smooth projective curve with $g(C)\geq 2$; then $\Aut_\QQ(X)$, containing $\Aut_\QQ(S)\times \{\id_C\}$, is nontrivial, and $q(X)=q(S)+g(C)$ can be arbitrarily large.
	
	Nevertheless, we have the following question about the boundedness of $\Aut_\QQ(X)$ for smooth projective varieties of general type:
	\begin{qu}\label{qu: main}
		Given a positive integer $n$, does there exist a constant $M(n)$, depending on $n$, such that $|\mathrm{Aut}_{\QQ}(X)|\leq M(n)$ for any $n$-dimensional \emph{smooth} projective variety $X$ of general type?
	\end{qu}

	In this paper we give a confirmative partial answer to Question~\ref{qu: main} in dimension $3$.
	\begin{thm}\label{thm: main}
		There is a  positive constant $M$ such that, for any smooth projective threefold of general type satisfying one of the following conditions:
		\begin{enumerate}
			\item[(i)] $q(X)\geq 3$.
			\item[(ii)] $X$ has a Gorenstein minimal model.
		\end{enumerate}
		the inequality $|\Aut_\QQ(X)|\leq M$ holds.
	\end{thm}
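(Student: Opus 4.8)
The plan is to bound the finite group $G:=\Aut_\QQ(X)$ (finite because $X$ is of general type, so $\Aut_0(X)=1$ and Lieberman's theorem applies) by passing to a good model and studying the quotient. First I would run the minimal model program to obtain a minimal model $X'$ with terminal singularities, Gorenstein in case (ii); since $G$ fixes the canonical class, it descends to a biregular action on $X'$ (equivalently, on the canonical model). Forming $\pi\colon X'\to Y:=X'/G$, the orbifold ramification formula yields a boundary $\Delta=\sum_i(1-\tfrac{1}{m_i})\Delta_i$ with $K_{X'}=\pi^*(K_Y+\Delta)$, and hence the exact identity $\vol(X')=|G|\cdot\vol(Y,\Delta)$. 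The coefficients of $\Delta$ lie in $\{1-1/m:m\ge 2\}$, precisely the kind of restricted set $\mc$ for which the Noether-type inequality announced in the abstract is tailored. Thus bounding $|G|$ amounts to bounding the ratio $\vol(X')/\vol(Y,\Delta)$.

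The numerical triviality enters to pin down the invariants of $Y$. Since $G$ acts trivially on $H^*(X,\QQ)$ and preserves the Hodge decomposition, it acts trivially on each $H^{p,q}$, in particular on $H^q(\mo_X)$ for all $q$; as taking $G$-invariants commutes with cohomology of a finite quotient in characteristic zero, $H^q(\mo_Y)=H^q(\mo_{X'})^G=H^q(\mo_{X'})$, whence $\chi(\mo_Y)=\chi(\mo_{X'})$ and (via Serre duality) $p_g(Y)=p_g(X')$. Now I would sandwich the volume ratio between two estimates involving only these preserved invariants: from above, the Miyaoka--Yau-type inequality for the minimal threefold $X'$, namely $\vol(X')=K_{X'}^3\le C_1\,|\chi(\mo_{X'})|$ (recall $\chi<0$ for a minimal threefold of general type); from below, the new Noether-type inequality for the log canonical pair $(Y,\Delta)$, $\vol(Y,\Delta)\ge C_2\,p_g(Y)-C_3$. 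Since $|\chi(\mo_{X'})|\le p_g(X')+q(X)-1$, the two invariants are comparable provided $q(X)$ is bounded, and then $|G|\le C_1|\chi|/(C_2 p_g-C_3)$ is uniformly bounded as soon as the invariants are large; the finitely many bounded values of $\chi(\mo_{X'})$ give threefolds lying in bounded families (Gieseker--Hacon--McKernan--Xu), which have uniformly bounded automorphism groups. This disposes of case (ii), where the Gorenstein hypothesis guarantees usable constants in both inequalities and keeps the singularities of $Y$ log canonical; the residual possibility $q(X)\ge 3$ is exactly case (i).

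For case (i) I would instead exploit the Albanese morphism $a\colon X\to A:=\Alb(X)$ with $\dim A=q(X)\ge 3$. Because $G$ acts trivially on $H^1(X,\QQ)\supset H^0(\Omega^1_X)$, the induced action on $A$ fixes every holomorphic $1$-form and is therefore by translations $t_g$; thus $a$ is $G$-equivariant and $G$ stabilizes the Albanese image $Z=a(X)$. When $X$ is of maximal Albanese dimension, $Z$ is a threefold of general type and $a$ is generically finite, so the translations stabilizing $Z$ form a finite, boundable subgroup, which should give the sharp bound $|G|\le 4$; when $\dim Z\in\{1,2\}$ the fibres of $a$ are positive-dimensional, and I would combine the translation action on the base abelian subvariety with the known bounds for numerically trivial automorphisms of the surface or curve fibres recalled in the introduction. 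Generic vanishing for $a$ is the main tool controlling $p_g$ and the equivariant geometry here.

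The hard part, I expect, is twofold. The genuinely new input is the Noether-type inequality for log canonical pairs $(Y,\Delta)$ with coefficients in the prescribed set $\mc$: one must prove $\vol(Y,\Delta)\ge C_2\,p_g-C_3$ with a constant $C_2$ strong enough that the resulting ratio $C_1/C_2$ stays finite after the invariants are identified across the quotient. Equally delicate is controlling the singularities produced by the $G$-action — I must ensure that $Y$ is at worst log canonical, so that both the volume identity $\vol(X')=|G|\vol(Y,\Delta)$ and the comparison $\chi(\mo_Y)=\chi(\mo_{X'})$ remain valid and Miyaoka--Yau applies to $X'$. It is precisely to keep everything inside a class where these inequalities are available that the Gorenstein assumption is used in (ii) and the Albanese structure in (i).
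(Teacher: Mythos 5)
Your treatment of case (ii) is essentially the paper's argument: form the quotient $\pi\colon X\to Y=X/G$ with $K_X=\pi^*(K_Y+\Delta)$, compare the Miyaoka--Yau upper bound $\vol(K_X)\le 72\chi(X,\omega_X)\le 72(p_g+q-1)$ with the new Noether-type lower bound $\vol(K_Y+\Delta)\ge a\,p_g(Y,\Delta)-b$, use $p_g(X)=p_g(Y,\Delta)$, and absorb the small-$p_g$ range into a bounded family; the reduction of large $q$ in case (ii) to case (i) is also how the paper proceeds. So the genuine issues are all in case (i).

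First, in the maximal Albanese dimension subcase your mechanism is wrong. You propose to bound $G$ by the group of translations of $A_X$ stabilizing $Z=a_X(X)$, but the paper's Proposition~\ref{prop: fix A} shows that for a \emph{smooth} threefold of general type the induced translations are all \emph{trivial} (this itself needs an argument: either $\chi(\mo_X)\neq 0$ via Miyaoka--Yau when a smooth minimal model exists, or a fixed point produced by the first divisorial-to-point contraction in an MMP). Consequently $G\subset \Aut_A(X)$ acts on the \emph{fibers} of the generically finite map $a_X$, and the translation picture says nothing about its order; this is exactly why \cite[Example~6.3]{Z21} produces terminal (non-smooth) threefolds of maximal Albanese dimension with $|\Aut_\QQ|$ unbounded. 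The actual bound $|G|\le 4$ in Theorem~\ref{thm: mAd} comes from a different source: the eventual paracanonical map $\varphi_X\colon X\dashrightarrow Z$ factors through $X\to X/G$, so $|G|\le \deg\varphi_X\le 8$, and the Galois structure of $\varphi_X$ from \cite{J16} (together with the $(\ZZ/2\ZZ)^2$-cover classification of \cite{CDJ14} when $\chi(X,\omega_X)=0$, and a Chen--Jiang decomposition argument to exclude $q(X)>3$) pins down $|G|\le 4$. None of this is present in your sketch.

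Second, for Albanese dimension one or two your plan to invoke ``the known bounds for numerically trivial automorphisms of the surface or curve fibres'' does not work as stated: the restriction $\sigma|_F$ of $\sigma\in\Aut_\QQ(X)$ to a fiber $F$ need not be numerically trivial on $F$, since $H^*(X,\QQ)\to H^*(F,\QQ)$ is far from surjective. What one actually controls is that $G$ acts trivially on $H^*(X,\mo_X)$, hence $p_g(X)=p_g(X/G)$; the paper converts this, via M-regularity and the Chen--Jiang decomposition of $a_{X*}\omega_X$, into $p_g(F)=p_g(F/G)$ for a general fiber, and then bounds $|G|$ by Beauville's degree bounds for canonical maps of surfaces (Albanese dimension one) or by comparing ranks of M-regular pushforwards and, in the hardest subcase, Xiao's slope inequality. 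You would need to supply this bridge from cohomological triviality on $X$ to an invariant of the fiber before any fiberwise bound can be applied.
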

	Curiously, the smoothness condition in Theorem~\ref{thm: main} cannot be weakened to, say, the condition of having terminal singularities: There exist projective threefolds $X$ with terminal singularities and maximal Albanese dimension such that $|\Aut_\QQ(X)|$ can be arbitrarily large (\cite[Example~6.3]{Z21}). We observe that the numerically trivial automorphism $\sigma$ appearing there induces a nontrivial translation of the Albanese variety $A_X$. This cannot happen if $X$ were smooth; see Lemma~\ref{prop: fix A}.

	For smooth threefolds with maximal Albanese dimension, we have a more precise statement.
	\begin{thm}[= Corollary~\ref{cor: mAd}]\label{thm: mAd1}
		If $X$ is smooth projective threefold of general type with maximal Albanese dimension, then $|\Aut_\QQ(X)|\leq 4$, and in the equality case we have $\Aut_\QQ(X)\cong(\ZZ/2\ZZ)^2$ and $q(X)=3$.
	\end{thm}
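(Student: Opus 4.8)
The plan is to reduce the problem to the geometry of the Albanese map and then to a quotient to which the Noether-type inequality applies. Since $X$ is of general type, $\Aut_0(X)=\{\id\}$, so by Lieberman's finiteness the group $G:=\Aut_\QQ(X)$ is finite. Maximal Albanese dimension means the Albanese map $a_X\colon X\to A_X$ is generically finite onto its image, whence $q:=q(X)=\dim A_X\ge 3$, so $X$ already falls under case~(i) of Theorem~\ref{thm: main}. By Lemma~\ref{prop: fix A}, each $\sigma\in G$ induces the identity on $A_X$: numerical triviality forces the induced automorphism of $A_X$ to be a translation, and smoothness of $X$ rules out a nontrivial one. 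Thus $a_X\circ\sigma=a_X$ for all $\sigma\in G$, i.e.\ $G$ acts along the (generically finite) fibres of $a_X$.

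Next I would pass to the quotient $\pi\colon X\to Z:=X/G$. Because $G$ acts trivially on $H^*(X,\QQ)$, the transfer isomorphism gives $H^*(Z,\QQ)\cong H^*(X,\QQ)^G=H^*(X,\QQ)$; as $Z$ has (rational) quotient singularities this refines to an equality of Hodge numbers, so $q(Z)=q$, $p_g(Z)=p_g(X)$ and $\chi(\omega_Z)=\chi(\omega_X)$. The Albanese map factors as $X\to Z\to A_X$, so $Z$ is again of general type and of maximal Albanese dimension with Albanese variety $A_X$. Writing the branch data as a boundary $\Delta=\sum_i(1-1/m_i)\Delta_i$, whose coefficients lie in a set $\mc$ with $\min\mc\ge\tfrac12$, one has $K_X=\pi^*(K_Z+\Delta)$ and hence the volume identity $\vol(X)=|G|\,\vol(K_Z+\Delta)$, with $(Z,\Delta)$ a log canonical pair of general type and of maximal Albanese dimension.

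The numerical triviality now feeds in a second time, at the level of forms. Decomposing $\pi_*\omega_X=\bigoplus_{\rho}(\omega_Z\otimes L_\rho)$ into isotypic components for the $G$-action, triviality of $G$ on every $H^i(X,\omega_X)$ forces each nontrivial component to be acyclic, i.e.\ $\chi(\omega_Z\otimes L_\rho)=0$ for all $\rho\neq 1$, where the $L_\rho$ are the line bundles determined by the branch divisor. Since the branch divisor is effective, the $L_\rho$ carry positivity, so each identity $\chi(\omega_Z\otimes L_\rho)=0$ severely limits how positive the branch divisor can be. Combining this constraint with the Noether-type inequality for the log canonical pair $(Z,\Delta)$ proved earlier in the paper bounds the number of admissible nontrivial characters, and thereby $|G|$. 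The coefficient bookkeeping produces the numerical value: the smallest possible coefficient is $\tfrac12$, realised exactly by order-two stabilisers, and tracing it through the inequality yields $|G|\le 4$.

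Finally, for the equality case, I expect that $|G|=4$ forces every coefficient of $\Delta$ to equal $\tfrac12$ — so all nontrivial stabilisers have order $2$ and $G\cong(\ZZ/2\ZZ)^2$ — and that the equality clause of the Noether-type inequality, which can hold only at the minimal coefficient and the minimal irregularity, pins down $q(X)=3$. The main obstacle is the quantitative heart of the third paragraph: establishing and correctly invoking the Noether-type inequality with the sharp constant attached to the minimal coefficient $\tfrac12$, and controlling its interaction with the quotient singularities of $Z$ so that both the volume identity and the equalities of $\chi$ and $p_g$ hold as stated. Reconciling the vanishing $\chi(\omega_Z\otimes L_\rho)=0$ with the effectivity of the branch divisor over the abelian variety $A_X$ is the other delicate ingredient, and it is precisely what should single out the third author's extremal family in the equality case.
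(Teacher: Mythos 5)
Your opening reduction is the same as the paper's: by Lemma~\ref{lem: Hodge} and Proposition~\ref{prop: fix A}, $\Aut_\QQ(X)\subset\Aut_\mo(X)\cap\Aut_A(X)$, so $G$ acts along the fibres of $a_X$ and the Albanese map factors through $\pi\colon X\to X/G$. After that your strategy diverges from the paper's, and it has a genuine gap at its quantitative core. You propose to bound $|G|$ from the volume identity $\vol(K_X)=|G|\,\vol(K_Z+\Delta)$ together with the Noether-type inequality of the appendix. But that inequality only bounds $\vol(K_Z+\Delta)$ from \emph{below}; to extract any bound on $|G|$ you also need an upper bound on $\vol(K_X)$ (e.g. Miyaoka--Yau, $\vol(K_X)\le 72\chi(X,\omega_X)$), and that is precisely what is unavailable here: the hypothesis is maximal Albanese dimension, not the existence of a Gorenstein minimal model. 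Moreover $\chi(X,\omega_X)=0$ genuinely occurs for such threefolds, which would make any $\chi$-based upper bound vacuous. Even where this volume strategy does work (case (iv) of Theorem~\ref{thm: O trivial}, and \cite{Z21} for threefolds satisfying (i) and (iv) simultaneously), the constants $a_n(\mc),b_n(\mc)$ are not explicit and the best bound previously obtained this way was $6$; it cannot yield the sharp value $4$. The sentence ``tracing it through the inequality yields $|G|\le 4$'' is therefore an assertion, not an argument, and your isotypic decomposition $\pi_*\omega_X=\bigoplus_\rho(\omega_Z\otimes L_\rho)$ additionally presupposes that $G$ is abelian, which you have not established at that point.

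The paper's actual mechanism is entirely different. If $\chi(X,\omega_X)=0$, then $q(X)=3$ and $a_X$ is birationally a $(\ZZ/2\ZZ)^2$-cover by \cite{CDJ14}, so $G\le(\ZZ/2\ZZ)^2$. If $\chi(X,\omega_X)>0$, the eventual paracanonical map $\varphi_X\colon X\dashrightarrow Z$ factors through $\pi$ by \cite[Proposition~1.5]{J16}, and $\deg\varphi_X\le 8$ together with the classification of the cases of degree at least $2$ in \cite[Theorem~1.7]{J16} gives $|G|\le\deg\varphi_X/\deg\varphi_Y\le 4$, with $G\cong(\ZZ/2\ZZ)^2$ in the extremal case. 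The claim $q(X)=3$ at equality is then proved by comparing $f_*(\omega_X\otimes Q)$ and $h_*(\omega_Y\otimes Q)$ over $V/K$ via the Chen--Jiang decomposition, M-regularity and Koll\'ar's torsion-freeness; there is no ``equality clause'' of the Noether-type inequality to invoke, and your sketch of the equality case does not correspond to a workable argument.
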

	This generalizes and refines the effective boundedness results of \cite{Z21}. Thanks to \cite[Example~6.1]{Z21}, the bound of Theorem~\ref{thm: mAd1} is optimal.

	By the Hodge decomposition of $H^*(X, \CC)$, $\Aut_\QQ(X)$ acts trivially on $H^i(X, \mo_X)$ and $H^{i}(X, \omega_X)$ for each $i$. It is this condition we are mainly using in the arguments, and it already yields the boundedness we are searching for. Theorems~\ref{thm: main} and \ref{thm: mAd1} are consequences of the following more technical theorem; see also Corollaries~\ref{cor: mAd}, \ref{cor: Ad1}, \ref{cor: Ad2}. Here we allow mild singularities.
	\begin{thm}\label{thm: O trivial}
		There is a constant $M>0$ satisfying the following. Let $X$ be a projective threefold of general type with canonical singularities. Let $\Aut_\mo(X)<\Aut(X)$ be the group of automophisms acting trivially on $H^i(X, \mo_X)$ (or, equivalently, $H^i(X, \omega_X)$) for $i\geq 0$, and let $\Aut_A(X)<\Aut(X)$ the group of automorphisms preserving each fiber of the Albanese map $a_X\colon X\rightarrow A_X$. If $X$ satisfies one of the following conditions:
		\begin{enumerate}
			\item[(i)] $X$ has maximal Albanese dimension, that is, $\dim a_X(X) =3$;
			\item[(ii)] $X$ has Albanese dimension two and either $q(X)\geq 3$ or $A_X$ is a simple abelian surface;
			\item[(iii)]$X$ has Albanese dimension one and $q(X)\geq 2$;
			\item[(iv)] $X$ has a Gorenstein minimal model;
		\end{enumerate}
		then $|\Aut_\mo(X)\cap \Aut_A(X)|\leq M$. In case (i), we have $|\Aut_\mo(X)\cap \Aut_A(X)|\leq 4$, and if the equality holds then $\Aut_\QQ(X)\cong(\ZZ/2\ZZ)^2$ and $q(X)=3$.
	\end{thm}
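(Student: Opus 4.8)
The plan is to run a $G$-equivariant minimal model program, so that I may assume $X$ is a minimal model of general type with terminal singularities (and, in case (iv), Gorenstein), and then to study the quotient. Write $G := \Aut_\mo(X)\cap \Aut_A(X)$; since $X$ is of general type, $\Aut(X)$ is finite, so $G$ is finite and it suffices to bound $|G|$ \emph{uniformly}. Because every $\sigma\in G$ acts trivially on $H^i(X,\mo_X)$, hence, by Serre duality, on $H^i(X,\omega_X)$, the quotient $\pi\colon X\to Y:=X/G$ preserves the basic invariants: from $H^i(Y,\mo_Y)=H^i(X,\mo_X)^G=H^i(X,\mo_X)$ (finite quotients of rational singularities, characteristic zero) one gets $q(Y)=q(X)$, $p_g(Y)=p_g(X)$ and $\chi(Y,\mo_Y)=\chi(X,\mo_X)$. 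Equipping $Y$ with the orbifold boundary $\Delta=\sum_D (1-1/m_D)D$ recording the ramification of $\pi$, one has $K_X=\pi^*(K_Y+\Delta)$ and therefore $\vol(K_X)=|G|\cdot\vol(K_Y+\Delta)$. The coefficients of $\Delta$ lie in the set $\mc=\{1-1/m: m\geq 2\}$, and it is precisely for this set, where $\mc\cup\{1\}$ attains the minimum, that I intend to apply the log Noether inequality announced in the abstract.

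The engine for uniform boundedness in the non-sharp cases is this Noether-type inequality applied to the log canonical pair $(Y,\Delta)$: it bounds $\vol(K_Y+\Delta)$ from below in terms of the preserved invariant $p_g(Y)=p_g(X)$ (respectively $\chi$). Combined with $\vol(K_X)=|G|\,\vol(K_Y+\Delta)$, a bound $|G|\leq M$ drops out as soon as $\vol(K_X)$ is controlled \emph{from above} by the same invariant. It is this last step that forces the case division by Albanese dimension, since such an upper bound fails for general threefolds but is available in each listed situation. A second structural input I will exploit throughout is that $G$ preserves every fiber not only of the Albanese map $a_X$ but also, since it acts trivially on $H^0(X,\omega_X)$, of the canonical map $\phi_{|K_X|}$; thus $G$ acts purely along the fibers of the combined map.

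For the boundedness cases I reduce along the appropriate fibration. In case (ii) the Albanese fibration has surface fibers and in case (iii) its Stein factorisation gives a fibration over a curve with surface fibers; in both $G$ restricts to a group of the same cohomologically-trivial type on the general fiber, and I invoke the surface results of \cite{Cai12a,Cai12b,CLZ13,CatLiu20} together with the log Noether inequality on $(Y,\Delta)$ to make the fiberwise bound uniform across the family. Case (iv), the Gorenstein case, instead uses Riemann--Roch on the Gorenstein minimal model to bound $\vol(K_X)$ against $\chi(X,\mo_X)$, which feeds directly into the volume identity above. Case (i), carrying the sharp bound, I treat structurally rather than by volumes: since $a_X$ is generically finite, $a_X\circ\sigma=a_X$ forces $G$ to act faithfully on a general finite Albanese fiber, and the triviality of the $G$-action on $H^0(X,\omega_X)=H^0(A_X,{a_X}_*\omega_X)$, read through the generic vanishing / continuous eigensheaf decomposition of ${a_X}_*\omega_X$, confines this action to a very small abelian group. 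One shows $|G|\leq 4$, that equality forces $G\cong(\ZZ/2\ZZ)^2$, and that two independent involutions can be accommodated only when $q(X)=3$, higher irregularity producing extra invariant forms that must be fixed and hence trivialising the action. Via Lemma~\ref{prop: fix A}, which for smooth $X$ realises $\Aut_\QQ(X)$ as a subgroup of $\Aut_A(X)$, this yields the stated conclusion for $\Aut_\QQ(X)$, and optimality follows from the examples of \cite{Z21}.

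The main obstacle I expect is the uniformity of $M$, not finiteness for a fixed $X$. Two points require the most care: first, establishing the three-dimensional log Noether inequality for pairs with coefficients in $\mc$, sharp for $\mc\cup\{1\}$, which is exactly what converts the preserved invariants $p_g$ and $\chi$ into an honest bound on $|G|$; and second, in the fibered cases (ii) and (iii), controlling the automorphism groups of the fibers uniformly even as the fibers vary in moduli — here the essential leverage is that the cohomologically-trivial action on $X$ propagates to a cohomologically-trivial action on the general fiber, so that the known surface bounds apply with constants independent of the fiber.
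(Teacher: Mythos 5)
Your case (iv) is essentially the paper's argument (Theorem~\ref{thm: pg}): form the quotient pair $(Y,\Delta)$ with coefficients in $\{1-1/m\}$, use $\vol(K_X)=|G|\vol(K_Y+\Delta)$ together with the log Noether inequality of the appendix (Theorem~\ref{thm: Noether}) and the Miyaoka--Yau bound $\vol(K_X)\le 72\chi(X,\omega_X)$. But the claim that this ``engine'' also drives the other cases is where the proposal breaks down: without a Gorenstein minimal model there is no upper bound for $\vol(K_X)$ in terms of $p_g$ or $\chi$ (indeed $\chi(X,\omega_X)$ can vanish for threefolds of maximal Albanese dimension, by \cite{CDJ14}), so the volume comparison yields nothing in cases (i)--(iii). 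The paper handles those cases by entirely different, sheaf-theoretic arguments on $A_X$.

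Two concrete gaps. First, in cases (ii)--(iii) you assert that the cohomologically trivial action on $X$ ``propagates to a cohomologically-trivial action on the general fiber,'' and propose to quote the surface bounds of \cite{Cai12a, CLZ13, CatLiu20}. Triviality of $G$ on $H^*(X,\mo_X)$ does not restrict to triviality on $H^*(F,\mo_F)$ for a fiber $F$, so those results do not apply. What the paper actually extracts is the single equality $p_g(F)=p_g(F/G)$, obtained from the M-regularity (Chen--Jiang decomposition) of $a_{X*}\omega_X$ and $a_{Y*}\omega_Y$ on the abelian variety, after which Beauville's bounds on the degree of the canonical map of $F$ give the uniform estimate (Lemma~\ref{lem: Ad1}, Proposition~\ref{prop: Ad2}); the subcase $q(X)=2$ with $A_X$ simple needs a further long argument (Proposition~\ref{prop: Ad2 simple}) involving Koll\'ar's theorems on higher direct images and Xiao's slope inequality, none of which your sketch replaces. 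Second, in case (i) the sharp bound $|G|\le 4$ is asserted (``one shows'') but not derived: the generic-vanishing decomposition of $a_{X*}\omega_X$ alone does not confine $G$ to order $\le 4$. The paper's proof of Theorem~\ref{thm: mAd} rests on the factorization of the eventual paracanonical map through $X\to X/G$ and the classification of eventual paracanonical maps of degree $\ge 2$ from \cite[Theorem 1.7]{J16}, plus the birational $(\ZZ/2\ZZ)^2$-cover structure when $\chi(X,\omega_X)=0$ from \cite{CDJ14}; the claim $q(X)=3$ in the equality case is itself a separate argument via the Chen--Jiang decomposition over $V/K$. These inputs constitute the mathematical content of cases (i)--(iii) and are missing from the proposal.
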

	
	The third author \cite{Z21} proved the inequality $|\Aut_\mo(X)|\leq 6$ for threefolds of general type satisfying the conditions (i) and (iv) simultaneously; the proof was a straightforward application of the Noether--Severi type inequality and the Miyaoka--Yau inequality bounding the volume of $K_X$ in terms of $\chi(X, \omega_X)$ from below and from above respectively.
	
	In dealing with the case (iv) of Theorem~\ref{thm: O trivial}, we take a similar approach, but the required Noether type inequality in this more general setting is not readily available. For that reason, we prove in the appendix of the paper the existence of a Noether type inequality for log canonical pairs with coefficients from a given subset $\mc\subset(0,1]$ such that $\mc\cup\{1\}$ attains the minimum. This result is of independent interest.
	
	In the case (i) of Theorem~\ref{thm: O trivial}, we use the eventual paracanonical maps \cite{J16, BarPS19} to obtain the effective bound $|\Aut_\mo(X)\cap \Aut_A(X)|\leq 4$. The point is that the eventual paracanonical map $\varphi_X\colon X\dashrightarrow Z $ factors through the quotient map $\pi\colon X\rightarrow X/G$, where $G=\Aut_\mo(X)\cap \Aut_A(X)$. It follows that $|G|\leq \deg \varphi_X$. We can then draw on the explicit description of the cases with $\deg\varphi_X\geq 2$ by \cite{J16}.
	
	In general, there is a factorization of the Albanese map $a_X\colon X\xrightarrow{\pi} Y \xrightarrow{a_Y} A_X$, where $Y=X/G$ with $G=\Aut_\mo(X)\cap \Aut_A(X)$ and $\pi\colon X\rightarrow Y$ is the quotient map. If $\dim a_X(X) <3\leq q(X)$, we use the Chen--Jiang decomposition of $a_{X*}\omega_X$ and $a_{Y*}\omega_Y$ to bound the difference between the ranks of $a_{X*}\omega_X$ and $a_{Y*}\omega_Y$, which in turn gives the required bound on $|G|$. The case with $q(X)=2$ and $A_X$ a simple abelian surface requires a more involved application of the Chen--Jiang decomposition, but the idea is similar.

	\section{Preliminaries}\label{sec: prelim}
	We work over the complex numbers throughout the paper.
	
	In this section we recall the notion of Albanese morphisms for normal projective varieties $X$ as well as the Chen--Jiang decomposition on abelian varieties, and prove some basic properties of numerically trivial automorphisms in relation to their induced actions on the cohomology groups $H^*(X, \mo_X)$, $H^*(X, \omega_X)$ as well as the Albanese variety.
	
	\subsection{The Albanese morphism}
	Let $X$ be a normal projective variety. Then there exists a morphism $a_X: X\rightarrow A_X$ to an abelian variety $A_X$ such that any morphism from $X$ to an abelian variety factors through $a_X$ (\cite[Th\'eor\`eme 5]{Ser59}). This morphism is unique up to translations on $A_X$. We call $A_X$ the Albanese variety of $X$ and $a_X: X\rightarrow A_X$ the Albanese morphism.
	
	When $X$ is a smooth projective variety, it is well known that $\dim A_X=q(X):=h^1(X, \mathcal O_X)$ and  $A_X$ is dual to the Picard variety $\Pic^0(X)$.  In general, if $X$ is singular, then the Albanese morphism $a_{\tilde X}$ of $\tilde X$ may not factor through $\rho$, where $\rho: \tilde X\rightarrow X$ is a desingularization.  But when $X$ has rational singularities, then $a_{\tilde X}$ factors through $\rho$ and $A_X\cong A_{\tilde X}$ (see for instance \cite[Lemma 8.1]{Kaw85}).
	
	Let $X$ be a normal projective variety, and $a_X: X\rightarrow A_X$ the Albanese map with respect to a base point. We call the number $\dim a_X(X)$ the \emph{Albanese dimension} of $X$; if $\dim a_X(X) =X$ then $X$ is said to be \emph{of maximal Albanese dimension}. Let $X\rightarrow S \xrightarrow{a_S}A_X$ be the Stein factorization of $a_X$. By the universal property of the Albanese morphisms, it is easy to check that $A_S=A_X$, and the morphism $a_S\colon S\rightarrow A_X$ is indeed the Albanese morphism of $S$.
	For any $\sigma\in \Aut(X)$, by the universal properties of the Albanese morphisms and of the Stein factorization, there are induced automorphisms $\sigma_S\in \Aut(S)$ and $\sigma_A\in \Aut(A_X)$ such that the following diagram is commutative
	\[
	\begin{tikzcd}
	X \arrow[r, "f"] \arrow[d, "\sigma"] & S \arrow[r, "a_S"] \arrow[d, "\sigma_S"] &  A_X \arrow[d, "\sigma_A"] \\
	X \arrow[r, "f"] & S \arrow[r, "a_S"] &  A_X\\
	\end{tikzcd}
	\]
	In this way, we obtain group homomorphisms
	\begin{equation}\label{eq: induce}
	\psi_S\colon \Aut(X)\rightarrow \Aut(S), \, \sigma\mapsto \sigma_S, \text{\hspace{.5cm}and \hspace{.5cm}  }
	\psi_A\colon\Aut(X)\rightarrow \Aut(A_X),\, \sigma\mapsto \sigma_{A}.
	\end{equation}
	In other words, $ \Aut(X)$ acts on $S$ and $A_X$ such that the morphisms $f$ and $a_S$ are $\Aut(X)$-equivariant.
	
	\subsection{The Chen--Jiang decomposition}
	An important ingredient of the proofs of the main results on irregular threefolds is the Chen--Jiang decomposition theorem. We briefly recall this result.
	
	Let $A$ be an abelian variety. For a coherent sheaf $\cF$ on $A$, we define the  $i$-th \emph{cohomological support loci} for integer $i\geq 0$ to be the following closed subset of $\Pic^0(A)$ with the reduced scheme structure:
	$$V^i(\cF):=\{P\in\Pic^0(A)\mid H^i(A, \cF\otimes P)\neq 0\}.$$ We say that $\cF$ is \emph{GV} if $\codim_{\Pic^0(A)}V^i(\cF)\geq i$ for each $i\geq 1$ and we say that $\cF$ is \emph{M-regular} if $\codim_{\Pic^0(A)}V^i(\cF)> i$ for each $i\geq 1$. These two definitions were introduced respectively by Hacon \cite{Hac04} and by Pareschi--Popa \cite{PP03}. GV sheaves have certain positivity properties.   Hacon proved in \cite{Hac04} that if $0\neq \cF$ is GV, then $V^0(\cF)\neq \emptyset$, that is, there exists $P\in \Pic^0(A)$ such that $H^0(A, \cF\otimes P)\neq 0$. M-regular sheaves have stronger positivity properties. Pareschi and Popa proved in \cite{PP03} that if $\cF$ is M-regular, then $V^0(\cF)=\Pic^0(A)$ and, roughly speaking, $\cF$ can be generated by the twisted global sections.
	
	Given a morphism $f: X\rightarrow A$ from a smooth projective variety to an abelian variety, the main result of \cite{Hac04} states that $R^if_*\omega_X$ is a GV-sheaf for each $i\geq 0$.  The following improvement of Hacon's theorem, which was first proved  when $f$ is generically finite in \cite{CJ18} and  later proved for $f$ general  in \cite{PPS17}, is now called the Chen--Jiang decomposition theorem.
	\begin{thm}\label{thm: decomp}
		Let $f: X\rightarrow A$ be a morphism from a smooth projective variety to an abelian variety.  For each integer $k\geq 0$, there exist finitely many quotient maps $p_I: A\rightarrow B_I$ between abelian varieties with connected fibers, and, for each index $I$, an M-regular sheaf $\cF_I$ on $B_I$ and a torsion line bundle $Q_I\in \Pic^0(A)$ such that $$R^kf_*\omega_X\cong \bigoplus_{I}p_I^*\cF_{I}\otimes Q_I.$$
	\end{thm}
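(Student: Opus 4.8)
The plan is to establish the decomposition by reducing to a self-contained statement about a single generic-vanishing sheaf and then running an induction on $\dim A$. First, by Hacon's theorem each $R^kf_*\omega_X$ is GV on $A$; but in fact Saito's theory provides much more. The decomposition theorem for Hodge modules gives that $Rf_*(\omega_X[\dim X])$ splits in the derived category as a direct sum of shifts of polarizable pure Hodge modules $M_k$ on $A$, and $R^kf_*\omega_X$ is recovered as the lowest nonzero graded piece of the Hodge filtration $F_\bullet M_k$. The problem therefore becomes the following structural claim: the lowest nonzero Hodge piece $\cF$ of any polarizable Hodge module on $A$ decomposes as $\bigoplus_I p_I^*\cF_I\otimes Q_I$ with $p_I\colon A\to B_I$ quotients, $\cF_I$ M-regular and $Q_I$ torsion. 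I would prove Theorem~\ref{thm: decomp} by proving this claim.

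The induction is on $\dim A$, the case $\dim A=0$ being vacuous. For the inductive step I would examine the cohomological support locus $V^0(\cF)\subset\Pic^0(A)$. By the generic vanishing theorems of Green--Lazarsfeld together with Simpson's theorem and its refinements, $V^0(\cF)$ is a finite union of torsion translates $Q^{-1}+\widehat{p}(\PB)$, where $\widehat{p}\colon\PB\hookrightarrow\Pic^0(A)$ is the inclusion dual to a quotient $p\colon A\to B$ of abelian varieties and $Q$ is torsion. If $V^0(\cF)=\Pic^0(A)$ and no proper translate occurs, then $\cF$ is already M-regular and we are done with a single summand ($B=A$, $Q$ trivial). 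Otherwise I would isolate a maximal proper component $Q^{-1}+\widehat{p}(\PB)$. The heart of the argument is to split off from $\cF$ a genuine direct summand of the shape $p^*\cG\otimes Q$, where $\cG$ is the lowest Hodge piece of a polarizable Hodge module on the lower-dimensional $B$, obtained by pushing the relevant twist of $M_k$ forward along $p$. Once this factor is split off, the complement is again the lowest Hodge piece of a polarizable Hodge module with strictly smaller support data, so the inductive hypothesis applies to it; applying the inductive hypothesis on $B$ to $\cG$ then rewrites $p^*\cG\otimes Q$ itself in the required form.

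The main obstacle is precisely this splitting step, i.e.\ upgrading the set-theoretic information on $V^0(\cF)$ into an actual direct-sum decomposition of sheaves. When $f$ is generically finite onto its image one can argue more geometrically, comparing Euler characteristics and exploiting that $\chi$ is constant along $\Pic^0(A)$ for GV sheaves to force the twisted piece to split; this is essentially the route of \cite{CJ18}. For a general morphism $f$, however, the higher direct images need not arise from a maximal Albanese dimension situation, and the splitting cannot be detected at the level of sheaves alone. Here one must invoke the full strength of Saito's theory: the decomposition theorem guarantees that pushing a polarizable Hodge module forward along $p\colon A\to B$ splits canonically, and strictness of the Hodge filtration transports these Hodge-module direct sums to direct sums of the associated graded sheaves. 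This Hodge-theoretic input, following \cite{PPS17}, is what makes the induction go through in complete generality, and is the step I expect to require the most care.
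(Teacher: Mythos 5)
The paper does not prove this statement: Theorem~\ref{thm: decomp} is quoted as the Chen--Jiang decomposition theorem, with the proof deferred entirely to \cite{CJ18} (for $f$ generically finite) and \cite{PPS17} (in general). Your outline accurately reproduces the strategy of \cite{PPS17} --- Saito's decomposition theorem realizing $R^kf_*\omega_X$ as the lowest Hodge piece of a polarizable Hodge module, the linearity/torsion structure of $V^0$, and induction on $\dim A$ with the splitting of a summand $p^*\cG\otimes Q$ as the crux --- so it matches the route the paper implicitly relies on, though the decisive splitting step is identified rather than carried out.
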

	\begin{rema}\label{rmk:torsion}
		We  remark that under the assumption of Theorem \ref{thm: decomp}, for any torsion line bundle $Q\in \Pic^0(X)$, $R^kf_*(\omega_X\otimes Q)$ also has the Chen--Jiang decomposition (see for instance \cite[Note on page 2449]{PPS17}). Moreover, any direct summand of $R^kf_*(\omega_X)$ has the Chen--Jiang decomposition (see \cite[Proposition 3.6]{LPS20}).
		Similar decompositions also hold for varieties with klt singularities, see for  instance \cite{J20} and \cite{M20}.
	\end{rema}
	\subsection{Some basic properties of numerically trivial automorphisms}
	Let $X$ be a normal projective variety, and $a_X\colon X\rightarrow A_X$ the Albanese morphism. We are interested in various normal subgroups of the automorphism group $\Aut(X)$ of $X$, defined as follows:
	\begin{itemize}
		\item the group of \emph{numerically trivial automorphisms}
		\[\Aut_\QQ(X):=\{\sigma\in\Aut(X) \mid \sigma|_{H^*(X,\QQ)} = \id_{H^*(X,\QQ)}\},\]
		\item the group of \emph{$\mo$-cohomologically trivial automorphisms}
		\[\mathrm{Aut}_{\cO}(X) := \left\{\sigma\in\mathrm{Aut}(X) \mid \sigma|_{H^{*}(X, \cO_X)}=\id_{H^{*}(X, \cO_X)} \right\},\]
		\item the group of \emph{$A$-trivial automorphisms}
		\[\Aut_A(X):=\{\sigma\in\Aut(X) \mid \sigma|_{A_X}  = \id_{A_X}  \},\]
	\end{itemize}
	where $\sigma|_{H^*(X,\QQ)}$, $\sigma|_{H^{*}(X, \cO_X)}$ and  $\sigma|_{A_X}$  denote the induced actions of $\sigma$ on $H^*(X,\QQ):=\bigoplus_i H^i(X,\QQ)$,  $H^{*}(X, \cO_X):=\bigoplus_i H^i(X, \cO_X)$ and $A_X$ respectively.

	If $X$ is  Cohen--Macaulay, then by Serre duality, $\mathrm{Aut}_{\cO}(X)$ acts trivially on $H^i(X, \omega_X)$ for each $i\geq 0$. This is the case when $X$ has rational singularities. The following Lemma~\ref{lem: Hodge} implies that $\mathrm{Aut}_{\mathbb Q}(X)<\mathrm{Aut}_{\cO}(X)$, provided that there is an effective $\RR$-divisor $\Delta$ such that $(X,\Delta)$ is a log canonical pair.  We will see in Proposition~\ref{prop: fix A} that $\mathrm{Aut}_{\mathbb Q}(X)< \mathrm{Aut}_A(X)$ if $X$ is a smooth projective threefold of general type.
	
	\begin{lem}\label{lem: Hodge}
		Let $X$ be a normal projective variety.
		\begin{enumerate}
			\item[(i)] If there is an effective $\RR$-divisor $\Delta$ on $X$ such that $(X, \Delta)$ is log canonical, then $\Aut_\QQ(X)$ acts trivially on $H^i(X, \mo_X)$ for all $i\geq 0$.
			\item[(ii)] If $X$ is smooth, then $\Aut_\QQ(X)$ acts trivially on $H^i(X, \Omega_X^j)$ for all $0\leq i, j\leq \dim X$.
		\end{enumerate}
	\end{lem}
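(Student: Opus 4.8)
The plan is to treat the two parts separately, using the classical Hodge decomposition in the smooth case and the theory of Du Bois singularities in the singular case. Throughout, I write $\sigma^*$ for the induced action of $\sigma\in\Aut_\QQ(X)$ on a given cohomology group; since by hypothesis $\sigma$ acts trivially on $H^*(X,\QQ)$, it also acts trivially on $H^*(X,\CC)=H^*(X,\QQ)\otimes_\QQ\CC$. The whole point will be to propagate this triviality on complex cohomology to the coherent cohomology groups in question.

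For part (ii), I would invoke the Hodge decomposition $H^n(X,\CC)=\bigoplus_{i+j=n}H^i(X,\Omega_X^j)$ of the smooth projective variety $X$. The key feature is functoriality: a biholomorphic automorphism pulls back $(p,q)$-forms to $(p,q)$-forms, so $\sigma^*$ preserves the Hodge decomposition and acts block-diagonally with respect to it. As $\sigma^*$ is the identity on all of $H^n(X,\CC)$, it must therefore be the identity on each Hodge summand $H^i(X,\Omega_X^j)$. This settles (ii) with no real obstacle.

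For part (i) the difficulty is that $X$ is only normal, so classical Hodge theory on $X$ is not directly available; the resolution is to pass through Du Bois singularities. First I would recall that, since $(X,\Delta)$ is log canonical for some effective $\RR$-divisor $\Delta$, the variety $X$ has Du Bois singularities by the theorem of Koll\'ar--Kov\'acs. Then, for a proper variety with Du Bois singularities, the natural map $H^i(X,\CC)\to H^i(X,\mo_X)$ induced by the projection of the Du Bois complex $\underline{\Omega}_X^\bullet$ onto its zeroth graded piece $\underline{\Omega}_X^0\cong\mo_X$ is surjective for every $i\geq 0$. The crucial compatibility is that the Du Bois complex is functorial for morphisms, so $\sigma$ induces actions on $H^i(X,\CC)$ and on $H^i(X,\mo_X)$ that commute with this surjection. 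Since $\sigma^*=\id$ on $H^i(X,\CC)$ and the map is surjective, it follows that $\sigma^*=\id$ on $H^i(X,\mo_X)$ as well.

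I expect the main obstacle to be purely a matter of assembling the correct inputs for the singular case, namely that log canonical singularities are Du Bois and that the Du Bois property forces the surjectivity of $H^i(X,\CC)\to H^i(X,\mo_X)$, together with verifying the $\Aut(X)$-equivariance of this surjection through the functoriality of the Du Bois complex. Once these are in place, the argument mirrors the smooth case: triviality on rational, hence complex, cohomology propagates to $H^i(X,\mo_X)$ along the equivariant surjection.
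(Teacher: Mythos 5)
Your proof is correct and follows essentially the same route as the paper: part (ii) via the functoriality of the Hodge decomposition, and part (i) via the Koll\'ar--Kov\'acs theorem that log canonical implies Du Bois, which yields the surjectivity of the natural (and visibly $\Aut(X)$-equivariant) map $H^i(X,\CC)\to H^i(X,\mo_X)$. The paper simply cites this surjection as induced by the sheaf inclusion $\CC\subset\mo_X$ rather than unpacking the Du Bois complex, but the argument is the same.
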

	\begin{proof}
		Note that $\Aut_\QQ(X)$ acts trivially on $H^*(X, \CC) = H^*(X, \QQ)\otimes_\QQ \CC$. If $(X, \Delta)$ is log canonical then the homomorphism $H^i(X, \CC)\rightarrow H^i(X, \mo_X)$ induced by the inclusion of sheaves $\CC\subset \mo_X$ is surjective (\cite{KK10}). It follows that $\Aut_\QQ(X)$ acts trivially on $H^i(X, \mo_X)$. If $X$ is smooth, $\Aut_\QQ(X)$ acts trivially on its direct summands $H^i(X, \Omega_X^j)$ of $H^{i+j}(X, \CC)$ in the Hodge decomposition.
	\end{proof}

	\begin{lem}\label{lem: chi 0}
		Let $X$ be a normal projective variety with rational singularities. Suppose that $\sigma\in\Aut(X)$ induces a trivial action on $H^1(X,\cO_X)$. Then the following holds.
		\begin{enumerate}
			\item[(i)] The induced automorphism $\sigma_A\in \Aut(A_X)$ is a translation.
			\item[(ii)] If  $\sigma$ is of finite order and $\sigma_A\neq \id_{A_X}$, then $\chi(X, \omega_X)=(-1)^{\dim X}\chi(X, \mathcal O_X)=0$ and $e(X)=0$.
		\end{enumerate}
	\end{lem}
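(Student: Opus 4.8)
The plan is to prove the two parts with complementary tools: Hodge theory together with the differential of $\sigma_A$ for (i), and the Lefschetz fixed point formulas for (ii), the geometric heart of (ii) being that $\sigma$ has no fixed point. For (i), write $\sigma_A=t_c\circ \ell$ with $\ell\in\Aut(A_X)$ the linear part (fixing the origin) and $t_c$ translation by $c=\sigma_A(0)$. The differential of $\sigma_A$ at $0$ equals $d\ell$, and its transpose is the action of $\sigma_A$ on $H^0(A_X,\Omega^1_{A_X})=T_0^\vee A_X$. Since $a_X\colon X\to A_X$ is $\sigma$-equivariant (with $\sigma_A$ on the target) and $a_X^*\colon H^0(A_X,\Omega^1_{A_X})\xrightarrow{\sim} H^0(X,\Omega^1_X)$ is an isomorphism, it suffices to show $\sigma^*$ acts trivially on $H^0(X,\Omega^1_X)$. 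For $X$ smooth this is immediate, as $H^0(X,\Omega^1_X)$ is the complex conjugate of $H^1(X,\mo_X)$ inside $H^1(X,\CC)$, on which $\sigma^*$ is trivial by hypothesis; in general one passes to a resolution $\rho\colon\tilde X\to X$, using that rational singularities give $A_X\cong A_{\tilde X}$ and $H^1(X,\mo_X)\cong H^1(\tilde X,\mo_{\tilde X})$ compatibly with $\sigma$. Thus $\ell^*=\id$ on $T_0^\vee A_X$, so $d\ell=\id$; as an endomorphism of an abelian variety is determined by its differential at the origin, $\ell=\id_{A_X}$ and $\sigma_A=t_c$ is a translation.

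For (ii), note first that $c=\sigma_A(0)\neq 0$ since $\sigma_A\neq\id$, and $c$ is torsion because $\sigma$ has finite order. The crucial observation is that $\sigma$ is \emph{fixed point free}: a fixed point $x$ would give, by equivariance, $a_X(x)=\sigma_A(a_X(x))=a_X(x)+c$, forcing $c=0$. I would then feed this into the two fixed point formulas. For $\chi$, choose a $\sigma$-equivariant resolution $\tilde X$; the lift $\tilde\sigma$ is again fixed point free (being a lift of a fixed-point-free map), so the holomorphic Lefschetz fixed point theorem gives $\sum_i(-1)^i\tr\bigl(\tilde\sigma^*\mid H^i(\tilde X,\mo_{\tilde X})\bigr)=0$, the fixed locus being empty. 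Using that $\sigma$ acts trivially on $H^\bullet(X,\mo_X)=H^\bullet(\tilde X,\mo_{\tilde X})$, the left-hand side equals $\chi(X,\mo_X)$, whence $\chi(X,\mo_X)=0$; Serre duality (valid since rational singularities are Cohen--Macaulay) gives $\chi(X,\omega_X)=(-1)^{\dim X}\chi(X,\mo_X)=0$. For $e(X)$, the topological Lefschetz fixed point formula applied directly to $\sigma$ on the compact space $X$ gives $\sum_i(-1)^i\tr\bigl(\sigma^*\mid H^i(X,\QQ)\bigr)=e(X^\sigma)=e(\varnothing)=0$, and triviality of $\sigma^*$ on $H^\bullet(X,\QQ)$ turns the left-hand side into $e(X)$.

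The genuinely delicate step—and the one on which the whole argument rests—is the conversion of these vanishing Lefschetz numbers into the vanishing of $\chi$ and of $e$. This requires $\sigma$ to act trivially not merely on $H^1(X,\mo_X)$ but on \emph{all} of $H^\bullet(X,\mo_X)$ (for $\chi$) and on $H^\bullet(X,\QQ)$ (for $e$). Triviality on $H^1(\mo_X)$ by itself does not suffice: already for a surface isogenous to a product, the deck involution $\sigma$ of an \'etale double cover $X\to Y$ induces a nontrivial translation on $A_X$ and is trivial on $H^1(X,\mo_X)$, yet $\chi(X,\mo_X)\neq 0$ and $e(X)\neq 0$ (here $\sigma$ acts by $-1$ on a one-dimensional subspace of $H^2(X,\mo_X)$, so the holomorphic Lefschetz number vanishes while $\chi$ does not). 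Consequently the proof must invoke the full cohomological triviality of $\sigma$ that is available in the intended setting—namely that $\sigma$ lies in $\Aut_\mo(X)$, respectively $\Aut_\QQ(X)$, so that its action on every $H^i(X,\mo_X)$, respectively $H^i(X,\QQ)$, is trivial (cf.\ Lemma~\ref{lem: Hodge}). Everything else is either the clean fixed-point-free observation or formal input (equivariant resolution, rational-singularity comparisons, Serre duality), so it is precisely this upgrade of the triviality from $H^1$ to the whole cohomology that carries the real content.
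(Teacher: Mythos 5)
Your argument for both parts is the same as the paper's: part (i) by transporting the trivial action on $H^1(X,\mo_X)\cong H^1(A_X,\mo_{A_X})$ to the linear part of $\sigma_A$ (the paper works with $H^{0,1}$ directly rather than its conjugate $H^0(\Omega^1_{A_X})$, a cosmetic difference), and part (ii) by observing that a nontrivial translation downstairs forces $\sigma$ to be fixed point free, then applying the holomorphic Lefschetz formula on a $\sigma$-equivariant resolution and the topological Lefschetz formula on $X$.

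Your closing critique is also correct, and it applies verbatim to the paper's own proof: the paper passes from ``the holomorphic (resp.\ topological) Lefschetz number of a fixed-point-free finite-order map vanishes'' to ``$\chi(X,\mo_X)=0$ (resp.\ $e(X)=0$)'' without comment, and this step requires $\sigma$ to act trivially on \emph{every} $H^i(X,\mo_X)$ (resp.\ every $H^i(X,\QQ)$), not only on $H^1(X,\mo_X)$ as the stated hypothesis provides. Your counterexample is essentially right; concretely, take $Y=C\times D$ with $g(C),g(D)\ge 2$, let $L=L_C\boxtimes L_D$ with $L_C,L_D$ nontrivial $2$-torsion, and let $X\to Y$ be the associated \'etale double cover with deck involution $\sigma$. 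Then $h^1(Y,L^{-1})=0$, so $\sigma$ is trivial on $H^1(X,\mo_X)$ and $\sigma_A$ is a nontrivial translation, yet $\chi(X,\mo_X)=2\chi(Y,\mo_Y)\neq 0$ and $e(X)=2e(Y)\neq0$; the Lefschetz numbers still vanish because $\sigma$ acts by $-1$ on the $\chi(Y,\mo_Y)$-dimensional summand $H^2(Y,L^{-1})$ of $H^2(X,\mo_X)$. So part (ii) should be read with the stronger hypothesis $\sigma\in\Aut_\mo(X)$ for the vanishing of $\chi$, and $\sigma\in\Aut_\QQ(X)$ (or at least numerical triviality) for the vanishing of $e$. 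This is harmless for the paper, since every invocation of (ii) is for $\sigma\in\Aut_\mo(X)$ or $\Aut_\QQ(X)$ and only the $\chi$-statement is ever used, but you were right to insist that the upgrade from $H^1$ to the full cohomology is where the actual content lies.
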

	\begin{proof}
		(i) Suppose that $\sigma\in \Aut(X)$ induces the trivial action on $H^1(X, \mo_X)$. Since $a_X^*\colon H^1(A, \mo_{A})\rightarrow H^1(X,\mo_X)$ is an isomorphism satisfying $$\sigma^*\circ a_X^* =  a_X^* \circ\sigma_A^*\colon H^1(A_X, \mo_{A_X}) \rightarrow H^1(X, \mo_X)$$ and $\sigma^*$ is the identity map of $H^1(X,\CC)$, we infer that $\sigma_A^*$ is the identity map of $H^1(A_X, \mo_{A_X})$. It follows that $\sigma_A$ is a translation of $A_X$.
		
		(ii) If $\sigma_A\neq \id_{A_X}$, then it is a nontrivial translation by (i) and hence has no fixed points. It follows that $\sigma$ does not fix any point either. We then conclude by the topological Lefschetz fixed point theorem that $e(X) =0$.
		
		For the vanishing of $\chi(X, \mathcal O_X)$, we take a $\sigma$-equivariant resolution $\rho\colon \tilde X\rightarrow X$ (\cite{EV00}). Since $X$ has rational singularities, we have $a_X\circ \rho = a_X$ and $\chi(X, \mo_X) =\chi(\tilde X, \mo_{\tilde X})$. Let $\tilde\sigma\in \Aut(\tilde X)$ be the lift of $\sigma$. Then $\tilde \sigma$ has no fixed points either.   By the holomorphic Lefschetz fixed point theorem, applied to $\tilde\sigma$, we obtain $\chi(X, \mo_X) =\chi(\tilde X, \mo_{\tilde X})  = 0.$
	\end{proof}
	
	We refer to \cite{KM98} for the notions appearing in Lemma~\ref{lem: descend}.
	\begin{lem}\label{lem: descend}
		Let $(X, \Delta)$ be a projective log canonical pair, and $\varphi_R\colon X\rightarrow Y$ the contraction with respect to a $(K_X+\Delta)$-negative extremal ray $R$ of the Kleiman--Mori cone of $(X, \Delta)$. Then any numerically trivial automorphism $\sigma\in\Aut_\QQ(X)$ induces an automorphism $\sigma_Y\in \Aut(Y)$. Moreover, the following holds.
		\begin{enumerate}
			\item[(i)] If $Y$ has rational singularities, then $\sigma_Y$ is $\mo$-cohomologically trivial, that is, $\sigma_Y\in \Aut_\mo(Y)$.
			\item[(ii)] If $X$ and $Y$ are smooth, then $\sigma_Y$ is numerically trivial, that is, $\sigma_Y\in \Aut_\QQ(Y)$.
		\end{enumerate}
	\end{lem}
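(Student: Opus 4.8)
The plan is to first produce the descended automorphism $\sigma_Y$, and then reduce both (i) and (ii) to the injectivity of the pullback $\varphi_R^*$ on the relevant cohomology, combining this injectivity with the equivariance of $\varphi_R$ and the triviality of the $\sigma$-action on the source.

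First I would check that $\sigma$ preserves the extremal ray $R$. Since $\sigma\in\Aut_\QQ(X)$ acts trivially on $H^2(X,\QQ)$, for every Cartier divisor $D$ on $X$ one has $c_1(\mo_X(\sigma^*D)) = \sigma^*c_1(\mo_X(D)) = c_1(\mo_X(D))$ in $H^2(X,\QQ)$; hence for every curve $C\subset X$ the projection formula gives $D\cdot\sigma_*C = \sigma^*D\cdot C = \langle c_1(\mo_X(D)), [C]\rangle = D\cdot C$, so $\sigma_*$ acts as the identity on $N_1(X)$ and in particular fixes $R$. Consequently $\sigma$ permutes the curves contracted by $\varphi_R$ (those whose class lies in $R$): for a fibre $F=\varphi_R^{-1}(y)$, every component of $\sigma(F)$ has numerical class in $R$, so $\sigma(F)$ is a connected curve contracted by $\varphi_R$, and $\varphi_R(\sigma(F))$ is a point. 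Thus $\varphi_R\circ\sigma$ is constant on the fibres of $\varphi_R$, and since $(\varphi_R)_*\mo_X=\mo_Y$ with $Y$ normal, it factors as $\varphi_R\circ\sigma=\sigma_Y\circ\varphi_R$ for a unique $\sigma_Y\in\Aut(Y)$ (applying the same to $\sigma^{-1}$ shows $\sigma_Y$ is invertible). Taking pullbacks, this equivariance yields $\sigma^*\circ\varphi_R^* = \varphi_R^*\circ\sigma_Y^*$ on cohomology.

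For part (ii), with $X$ and $Y$ smooth, $\varphi_R$ is a surjective morphism of smooth projective varieties, so $\varphi_R^*\colon H^*(Y,\QQ)\to H^*(X,\QQ)$ is injective (choose a subvariety of $X$ mapping generically finitely onto $Y$ and apply the projection formula). Since $\sigma$ is numerically trivial, $\sigma^*=\id$ on $H^*(X,\QQ)$, and the equivariance gives $\varphi_R^*\circ\sigma_Y^*=\varphi_R^*$; injectivity then forces $\sigma_Y^*=\id$, i.e. $\sigma_Y\in\Aut_\QQ(Y)$. For part (i) the same strategy applies once I establish that $\varphi_R^*\colon H^i(Y,\mo_Y)\to H^i(X,\mo_X)$ is injective for all $i$: granting this, Lemma~\ref{lem: Hodge}(i) (using that $(X,\Delta)$ is log canonical) gives $\sigma^*=\id$ on $H^i(X,\mo_X)$, and the equivariance together with injectivity forces $\sigma_Y^*=\id$ on $H^i(Y,\mo_Y)$, i.e. $\sigma_Y\in\Aut_\mo(Y)$. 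To prove the injectivity I would pick a resolution $\mu\colon\tilde Y\to Y$ and a resolution $\rho\colon W\to X$ (with $W$ smooth) resolving the rational map $\mu^{-1}\circ\varphi_R$, so that there is a surjective morphism $g\colon W\to\tilde Y$ with $\mu\circ g=\varphi_R\circ\rho$. Then $\rho^*\circ\varphi_R^* = g^*\circ\mu^*$; here $\mu^*\colon H^i(Y,\mo_Y)\xrightarrow{\ \sim\ }H^i(\tilde Y,\mo_{\tilde Y})$ is an isomorphism because $Y$ has rational singularities, and $g^*\colon H^i(\tilde Y,\mo_{\tilde Y})\to H^i(W,\mo_W)$ is injective since $g$ is a surjective morphism of smooth projective varieties and hence induces an injection of Hodge structures on $H^i(-,\CC)$, restricting to an injection on the $(0,i)$-part $H^i(-,\mo)$. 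Thus $\rho^*\circ\varphi_R^*$ is injective, whence $\varphi_R^*$ is injective, as required.

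The routine part is the descent of $\sigma$ to $\sigma_Y$ and the bookkeeping with the equivariance. The main obstacle is the injectivity of $\varphi_R^*$ on structure-sheaf cohomology in the singular case (i): unlike the smooth case, $\varphi_R^*$ on $H^i(-,\mo)$ is not automatically injective (the Leray edge map $H^i(Y,\mo_Y)\to H^i(X,\mo_X)$ can fail to be so), and it is precisely the rational singularities of $Y$ — making $\mu^*$ an isomorphism and allowing the comparison with a smooth model dominating both $X$ and $\tilde Y$ — that rescues injectivity. It is worth noting that this argument is uniform in whether $\varphi_R$ is birational or of fibre type.
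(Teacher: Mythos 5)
Your proof is correct, and for the construction of $\sigma_Y$ and for part (ii) it is essentially the paper's argument: the paper also deduces from triviality on $H^2(X,\QQ)$ that curves generating $R$ stay contracted (it pulls back an ample class $L_Y$ from $Y$ and computes $L_Y\cdot(\varphi_R\circ\sigma)_*C=(\sigma^*L)\cdot C=0$, which is the same computation as your observation that $\sigma_*=\id$ on $N_1(X)$) and then invokes rigidity; for (ii) it obtains injectivity of $\varphi_R^*$ on $H^*(Y,\CC)$ from the decomposition theorem where you give the elementary projection-formula argument. The genuine divergence is in (i): the paper simply asserts that $\varphi_R$ induces isomorphisms $H^i(X,\mo_X)\cong H^i(Y,\mo_Y)$ compatible with the actions of $\sigma$ and $\sigma_Y$ (implicitly resting on the vanishing $R^j\varphi_{R*}\mo_X=0$ for an extremal contraction), whereas you prove only the injectivity of $\varphi_R^*$ on $H^i(-,\mo)$, by dominating both $X$ and a resolution $\tilde Y$ of $Y$ by a smooth $W$, using rational singularities of $Y$ to identify $H^i(Y,\mo_Y)$ with $H^i(\tilde Y,\mo_{\tilde Y})$, and using strictness of morphisms of Hodge structures to get injectivity of $g^*$ on the $(0,i)$-part. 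Injectivity is all that the conclusion requires, and your route has the advantage of not invoking any relative vanishing for the merely log canonical pair $(X,\Delta)$; the cost is that it yields less than the paper's isomorphism. One cosmetic point: in the fibre-type case a fibre $F$ of $\varphi_R$ need not be a curve, so ``every component of $\sigma(F)$ has numerical class in $R$'' should be read as ``every curve contained in $\sigma(F)$ has class in $R$''; since any two points of the connected projective set $\sigma(F)$ are joined by a chain of such curves, the conclusion that $\varphi_R(\sigma(F))$ is a point is unaffected.
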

	\begin{proof}
		By the contraction theorem, there is a line bundle $L$ on $X$ such that $L=\varphi_R^*L_Y$ for some ample line bundle $L_Y$ on $Y$. Since $\sigma$ acts trivially on $H^2(X,\QQ)$, it preserves the cohomology class of $L$ in $H^2(X,\QQ)$. Note that homological euqivalence coincides with algebraic equivalence for Cartier divisors, we infer that $\sigma^*L$ is numerically equivalent to $L$. For any curve $C$ whose numerical class generates the extremal ray $R$, using the projection formula, 
		\[
		L_Y\cdot(\varphi_R\circ\sigma)_*(C)=L\cdot\sigma_*(C)=(\sigma^*L)\cdot C=0,
		\]
		we get that $C$ is contracted by $\varphi_R\circ\sigma$. By the rigidity of birational contractions, there exists a morphism $\sigma_Y\colon Y\to Y$ such that the following diagram is commutative
		\[
		\begin{tikzcd}
		X \arrow[r, "\sigma"] \arrow[d, "\varphi_R"']& X\arrow[d, "\varphi_R"] \\
		Y\arrow[r, "\sigma_Y"]&Y
		\end{tikzcd}
		\]
		Since $\sigma$ is an isomorphism, we conclude that $\sigma_Y$ is an isomorphism of $Y$.
		
		(i) If $Y$ has rational singularities, then we have isomorphisms 
		\[
		H^i(X, \mo_X) \cong H^i(Y, \mo_Y)
		\]
		for each $i\geq 0$ which is compatible with induced actions of $\sigma$ and $\sigma_Y$ on $H^i(X, \mo_X)$ and $H^i(Y, \mo_Y)$ respectively. Since $\sigma\in \Aut_\mo(X)$, we infer that $\sigma_Y\in \Aut_\mo(Y)$.
		
		(ii) By the decomposition theorem for resolutions (\cite{dCM09}), $\varphi_R^*\colon H^*(Y, \CC)\rightarrow H^*(X, \CC)$ is injective. Thus the numerical triviality of $\sigma$ implies that of $\sigma_Y$.
	\end{proof}
	
	\begin{prop}\label{prop: fix A}
		Let $X$ be a smooth projective threefold of general type. Then $\Aut_\QQ(X)$ acts trivially on $A_X$.
	\end{prop}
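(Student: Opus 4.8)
The plan is to show, by contradiction, that for every $\sigma\in\Aut_\QQ(X)$ the induced automorphism $\sigma_A\in\Aut(A_X)$ is the identity. Since $X$ is smooth, the pair $(X,0)$ is log canonical, so Lemma~\ref{lem: Hodge} gives $\Aut_\QQ(X)<\Aut_\mo(X)$; in particular $\sigma$ acts trivially on $H^1(X,\mo_X)$, and Lemma~\ref{lem: chi 0}(i) shows that $\sigma_A$ is a translation of $A_X$. As $X$ is of general type, $\Aut(X)$ is finite, so $\sigma$ has finite order and $\sigma_A$ is translation by a torsion point $t\in A_X$. The goal is to prove $t=0$, so suppose $t\neq 0$.

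It is worth stressing at the outset why this cannot be seen on cohomology: a translation of an abelian variety is homotopic to the identity, hence acts trivially on $H^*(A_X,\QQ)$, which is exactly why a nontrivial $\sigma_A$ is consistent with numerical triviality. The obstruction must therefore come from general type geometry. First I would extract the numerical consequences of $t\neq 0$. A nonzero torsion translation is fixed-point-free, and by the equivariance $a_X\circ\sigma=\sigma_A\circ a_X$ the automorphism $\sigma$ itself acts freely on $X$. Lemma~\ref{lem: chi 0}(ii) then yields $\chi(X,\omega_X)=\chi(X,\mo_X)=0$ and $e(X)=0$. (Equivalently, on the \'etale quotient $\pi\colon X\to Y:=X/\langle\sigma\rangle$ one gets $\chi(\mo_Y)=e(Y)=0$, and since $\sigma$ acts trivially on cohomology, $q(Y)=q(X)$ and the Hodge numbers of $Y$ agree with those of $X$.)

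The heart of the argument is a case analysis by the Albanese dimension $d:=\dim a_X(X)$, noting $q(X)\geq 1$ since $\sigma_A\neq\id$. If $d=3$, then $X$ is of maximal Albanese dimension and generic vanishing for varieties of general type (Ein--Lazarsfeld) gives $\chi(X,\omega_X)>0$, contradicting $\chi(\omega_X)=0$. If $d=1$, the Stein factorization of $a_X$ is a fibration $f\colon X\to C$ onto a smooth curve of genus $q(X)$, with $A_X\cong JC$; when $g(C)\geq 2$ the induced $\sigma_C$ lies in $\Aut_\QQ(C)=\{\id\}$ (the curve case recalled in the introduction), whereas $t\neq 0$ forces $\sigma_C\neq\id$ (if $\sigma_C=\id$ then $t$ fixes $a_C(C)$, which generates $JC$, so $t=0$), a contradiction. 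The residual cases are $d=2$ and $d=1$ with $C$ elliptic: here one uses that the general Albanese fibre $F$ is positive-dimensional and of general type, since by easy addition $\kappa(F)\geq\kappa(X)-d=3-d>0$. For $C$ elliptic one has $e(F)>0$, and $e(X)=0$ forces $f$ to be a smooth family over a base of Kodaira dimension $0$, hence isotrivial, whence $\kappa(X)=\kappa(F)+\kappa(C)=2$, contradicting general type. For $d=2$ I would instead feed the fibration structure, together with the fact that $t$ preserves the Albanese surface $a_X(X)$, into the Chen--Jiang decomposition of $a_{X*}\omega_X$ to contradict $\chi(\omega_X)=0$.

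I expect the main obstacle to be precisely these non-maximal Albanese dimension cases. For $d=3$, and for $d=1$ with $g(C)\geq 2$, the vanishing $\chi(\omega_X)=0$ is immediately absurd; but for $d=2$ and for the elliptic $d=1$ case the vanishing is perfectly compatible with general type, so the nonzero torsion translation $t$ must be exploited through finer positivity (generic vanishing and the Chen--Jiang decomposition) or through isotriviality/hyperbolicity statements for families of canonically polarized varieties. Making the $d=2$ case fully rigorous---controlling $R^ia_{X*}\omega_X$ on the $t$-invariant Albanese surface---is the step I would expect to require the most care.
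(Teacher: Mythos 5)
Your reduction to showing that the torsion translation $t$ is zero, via Lemmas~\ref{lem: Hodge} and \ref{lem: chi 0}, matches the paper's setup, but the case analysis that follows breaks down in its easiest-looking branch. For $d=3$ you assert that generic vanishing gives $\chi(X,\omega_X)>0$ for a smooth threefold of general type of maximal Albanese dimension; this is false. Generic vanishing only gives $\chi(X,\omega_X)\geq 0$, and Ein--Lazarsfeld's well-known example (and the classification in \cite{CDJ14}, which this paper itself invokes in the proof of Theorem~\ref{thm: mAd}) produces smooth threefolds of general type of maximal Albanese dimension with $\chi(\omega_X)=0$: birationally they are $(\ZZ/2\ZZ)^2$-covers of abelian threefolds. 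Worse, \cite[Example~6.3]{Z21}, quoted in the introduction, exhibits \emph{terminal} threefolds of general type of maximal Albanese dimension on which a numerically trivial automorphism does induce a nontrivial translation of $A_X$; so no argument like yours, which uses smoothness only through the Hodge-theoretic Lemma~\ref{lem: Hodge}, can close the $d=3$ case --- smoothness must enter in an essential geometric way. The $d=2$ branch is likewise not a proof ($\chi(\omega_X)=0$ is again compatible with Albanese dimension two, and you only indicate a hope of contradicting it via Chen--Jiang), and in the $d=1$ elliptic-base branch the implication ``$e(X)=0$ forces the family to be smooth'' is unjustified for surface fibers, after which the isotriviality appeal is heavy machinery.

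The paper avoids all of this with a short MMP argument that uses smoothness exactly where it is needed. Run a minimal model program $X=X_0\dashrightarrow\cdots\dashrightarrow X_n$ over $A_X$. If the minimal model is smooth, the Miyaoka--Yau inequality $\vol(K_X)\leq 72\chi(\mo_X)$ forces $\chi(\mo_X)>0$, and Lemma~\ref{lem: chi 0}(ii) then rules out a nontrivial translation. Otherwise there is a first step $\rho_{i_0}\colon X_{i_0}\rightarrow X_{i_0+1}$ at which smoothness is lost; by Mori's classification of extremal contractions on smooth threefolds this is a divisorial contraction of a divisor $E$ to a point, $\sigma$ descends to $X_{i_0}$ by Lemma~\ref{lem: descend} and preserves $E$, hence $\sigma_A$ fixes the point $a_{i_0}(E)\in A_X$ and, being a translation, is the identity. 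Your observation that the $d=1$, $g(C)\geq 2$ case is elementary is correct, but the uniform MMP argument supersedes the entire case division, and in particular the cases your proposal cannot handle.
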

	\begin{proof}
		The statement is trivial when $q(X)=0$. We can thus assume that $q(X)>0$.
		
		Take $\sigma\in\Aut_{\QQ}(X)$ and let $\sigma_A\in\Aut_{\QQ}(A_X)$ be the induced automorphism of $A_X$. Then $\sigma_A$ is a translation of $A_X$ by Lemmas~\ref{lem: Hodge} and \ref{lem: chi 0}. If $X$ has a smooth minimal model, then the Miyaoka--Yau inequality, $\vol(K_X)\leq 72\chi(\mo_X)$ implies that $\chi(\mo_X)>0$. By Lemma~\ref{lem: chi 0}, $\sigma_A$ is trivial.
		
		Now we assume that the minimal models of $X$ are singular. In order to show that $\sigma_A=\id_{A_X}$, it suffices to show that $\sigma_A$ fixes a point.  Consider a minimal model program
		\[
		\begin{tikzcd}
		X=X_0 \arrow[dashed]{r}{\rho_0}  & X_1  \arrow[dashed]{r}{\rho_1} &X_2 \arrow[dashed]{r}{\rho_2} &\cdots \arrow[dashed]{r}{\rho_{n-1}} &X_n
		\end{tikzcd}
		\]
		where for $0\leq i\leq n$, $X_i$ has terminal singularities, the $\rho_i$ are either divisorial extremal contractions or flips, and $K_{X_n}$ is nef. As we have explained in last section,  the induced maps $a_i\colon X_i\dashrightarrow A_X$ are  indeed the corresponding Albanese morphisms  of $X_i$ and the above minimal model program is over $A_X$.
		
		By assumption, $X_0=X$ is smooth and $X_n$ is singular, so we there is an index $i_0$ such that $X_{i_0}$ is smooth and $X_{i_0+1}$ is singular. By the classification of extremal contractions in dimension three (\cite{Mo82}; see also \cite[Theorem1.32]{KM98}), the birational maps $\rho_i\colon X_i \dashrightarrow X_{i+1}$ are divisorial contractions for $i\leq i_0$ and, moreover,  $\rho_{i_0}\colon X_{i_0}\rightarrow X_{i_0+1}$ is a morphism contracting a divisor $E\subset X_{i_0}$ to a point $p\in X_{i_0+1}$. By Lemma~\ref{lem: descend}, $\sigma$ descends to a numerically trivial automorphism $\sigma_{i_0}\in\Aut_{\QQ}(X_{i_0})$. In particular, $\sigma_{i_0}$ preserves the exceptional locus $\Exc(\rho_{i_0})$, and this implies that the induced automorphism $\sigma_A$ fixes the point $a_{i_0}(E) =a_{i_0+1}(p)\in A_X$.
	\end{proof}

	\section{Irregular threefolds of general type}
	
	In this section, we prove Theorem~\ref{thm: O trivial}, cases (i)-(iii).  Let $X$ be an irregular projective threefold of general type with canonical singularities and $a_X\colon X\rightarrow A_X$ its Albanese map with respect to a base point.

	We proceed according to the Albanese dimension $\dim a_X(X)$.
	
	\subsection{The case $\dim a_X(X)=3$}
	In this subsection we assume that $\dim a_X(X)=3$, so $X$ is of maximal Albanese dimension.
	\begin{thm}\label{thm: mAd}
		Let $X$ be a projective threefold of general type with canonical singularities and of maximal Albanese dimension. Then  $|\mathrm{Aut}_{\cO}(X)\cap \mathrm{Aut}_A(X)|\leq 4$, and if the equality holds then $\mathrm{Aut}_{\cO}(X)\cap \mathrm{Aut}_A(X)\cong (\ZZ/2\ZZ)^2$ and $q(X)=3$.
	\end{thm}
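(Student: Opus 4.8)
The plan is to reduce to a smooth model and then exploit the functoriality of the eventual paracanonical map of \cite{J16, BarPS19}. Set $G=\Aut_\mo(X)\cap\Aut_A(X)$; since $X$ is of general type, $\Aut(X)$ is finite and so is $G$. As $X$ has canonical, hence rational, singularities, I would first take a $G$-equivariant resolution $\rho\colon\tilde X\to X$ (\cite{EV00}). Then $\tilde X$ is smooth, of general type and of maximal Albanese dimension, one has $A_{\tilde X}\cong A_X$ and $\rho$-induced $G$-equivariant isomorphisms $H^i(\tilde X,\mo_{\tilde X})\cong H^i(X,\mo_X)$, so the lifted action realizes $G$ as a subgroup of $\Aut_\mo(\tilde X)\cap\Aut_A(\tilde X)$ of the same order. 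Hence I may assume $X$ smooth.

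Next I would prove that the eventual paracanonical map $\varphi_X\colon X\dashrightarrow Z$ factors through the quotient $\pi\colon X\to X/G$. Each $\sigma\in G$ satisfies $\sigma_A=\id_{A_X}$, so it commutes with $a_X$ and with the multiplication isogenies of $A_X$ used to build $\varphi_X$; by functoriality it therefore induces $\sigma_Z\in\Aut(Z)$ with $\varphi_X\circ\sigma=\sigma_Z\circ\varphi_X$, lying over $\id_{A_X}$ through $a_X=a_Z\circ\varphi_X$. The essential claim is $\sigma_Z=\id_Z$: as $\sigma_A=\id_{A_X}$, $\sigma$ fixes every $P\in\Pic^0(X)\cong\Pic^0(A_X)$ and acts on each $H^0(X,\omega_X\otimes P)$, and the hypothesis $\sigma\in\Aut_\mo(X)$ (equivalently, triviality on $H^*(X,\omega_X)$ by Serre duality) forces this action to be trivial on the paracanonical sections that define $\varphi_X$; since $Z$ is birationally governed by these sections, $\sigma_Z=\id_Z$. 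Consequently $\varphi_X$ is $G$-invariant and factors as $X\xrightarrow{\pi}X/G\dashrightarrow Z$ with both maps generically finite, whence
\[
|G|=\deg\pi\le\deg\varphi_X.
\]

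It then remains to invoke the explicit classification in \cite{J16} of the threefolds of maximal Albanese dimension whose eventual paracanonical map has degree at least $2$. This pins down both the possible values of $\deg\varphi_X$ and the structure of the extremal case, and combined with the displayed inequality it yields $|G|\le 4$. In the equality case $|G|=4$ one is forced to have $\deg\varphi_X=4$ with $X/G\dashrightarrow Z$ birational, so that $G$ is the Galois group of the degree-$4$ eventual paracanonical cover; Jiang's description identifies this cover as a $(\ZZ/2\ZZ)^2$-cover with $q(X)=3$, giving $G\cong(\ZZ/2\ZZ)^2$ as claimed.

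The step I expect to be the main obstacle is the triviality $\sigma_Z=\id_Z$, i.e. that $\varphi_X$ descends to $X/G$. Making it rigorous means tracking the $G$-action through the limiting construction of the eventual paracanonical map over the \'etale covers $X_d\to X$ and verifying that $\mo$-cohomological triviality on $X$, under the constraint $\sigma_A=\id_{A_X}$, forces the action on the eventual paracanonical sections to be genuinely trivial rather than merely to preserve each summand. Matching the group-theoretic output in the boundary case to Jiang's classification—excluding $\ZZ/4\ZZ$ in favour of $(\ZZ/2\ZZ)^2$ and extracting $q(X)=3$—is the other point requiring care.
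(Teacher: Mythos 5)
Your overall strategy for the main case coincides with the paper's: factor the eventual paracanonical map $\varphi_X\colon X\dashrightarrow Z$ through $\pi\colon X\to Y=X/G$ (the paper gets this from \cite[Proposition~1.5]{J16}, using that $\chi(X,\omega_X)=\chi(Y,\omega_Y)$ and generic vanishing force $H^0(Y,\omega_Y\otimes P)=H^0(X,\omega_X\otimes P)$ for general $P$ --- this is the clean mechanism behind your ``$\sigma_Z=\id_Z$'' claim), and then invoke \cite[Theorem~1.7]{J16}. However, there are genuine gaps. First, your argument silently assumes $\chi(X,\omega_X)>0$: when $\chi(X,\omega_X)=0$, generic vanishing gives $h^0(X,\omega_X\otimes P)=0$ for general $P$, so the eventual paracanonical map is not available at all. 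The paper handles this case separately via \cite{CDJ14}, which says $a_X$ is then birationally a $(\ZZ/2\ZZ)^2$-cover (whence $G<(\ZZ/2\ZZ)^2$ and $q(X)=3$). Second, the bound $|G|\le\deg\varphi_X\le 8$ does not by itself give $|G|\le 4$; the paper needs the finer identity $|G|=\deg\varphi_X/\deg\varphi_Y$ where $\varphi_Y$ is the eventual paracanonical map of $Y$, together with the Galois-group description in \cite[Theorem~1.7]{J16} ($\ZZ/2$, $\ZZ/3$, $(\ZZ/2\ZZ)^2$, or $(\ZZ/2\ZZ)^3$) applied according to whether $\deg\varphi_Y=1$ or $\ge 2$. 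Relatedly, your assertion that $|G|=4$ forces $\deg\varphi_X=4$ is false: one can a priori have $\deg\varphi_X=8$ and $\deg\varphi_Y=2$, in which case $G$ is an order-$4$ subgroup of $(\ZZ/2\ZZ)^3$ --- the conclusion $G\cong(\ZZ/2\ZZ)^2$ survives, but only through this branch of the case analysis.

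The most serious omission is $q(X)=3$ in the equality case. This is not ``extracted from Jiang's description'': when $\chi(X,\omega_X)>0$ the paper must rule out $q(X)>3$ by a genuine argument. It uses \cite[Proposition~4.4]{J16} to identify $\pi$ with the eventual paracanonical map, passes to the fibration $f\colon X\to V_B=V/K$ given by the Ueno fibration of the Albanese image, and shows via Koll\'ar's torsion-freeness and the Chen--Jiang decomposition (M-regularity of $f_*(\omega_X\otimes Q)$ and $h_*(\omega_Y\otimes Q)$, so that a nonzero quotient would have $V^0=\Pic^0(B)$, contradicting the equality of twisted $\chi$'s) that $h_*(\omega_Y\otimes Q)=f_*(\omega_X\otimes Q)$; this yields $\chi(X_t,\omega_{X_t})=\chi(Y_t,\omega_{Y_t})$ for the general fibers of the degree-$4$ map $\pi_t\colon X_t\to Y_t$, which is then excluded by Riemann--Roch in fiber dimension $1$ and by \cite[Theorem~1.6]{J16} or \cite[Proposition~3.3]{BarPS19} in fiber dimension $2$. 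You flagged this as ``the other point requiring care,'' but as written the proposal contains no argument for it, and it is where most of the actual work in the paper's proof lies.
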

	\begin{proof}
		Let $G:=\mathrm{Aut}_{\cO}(X)\cap \mathrm{Aut}_A(X)$.
		Then  the Albanese morphism $a_X$ factors through the quotient morphism $\pi: X\rightarrow Y:=X/G$.
		
		If $\chi(X, \omega_X)=0$, then $a_X$ is birationally an $(\mathbb Z/2\ZZ)^2$-cover by \cite{CDJ14}. Thus $G$ is an (abelian) subgroup of $(\mathbb Z/2\ZZ)^2$.
		
		If $\chi(X, \omega_X)>0$, then $\chi(X, \omega_X)=\chi(Y, \omega_Y)>0$.  By \cite{J16}, the eventual paracanonical map $\varphi_X\colon X\dashrightarrow Z$ factors through the quotient map as $\varphi_X\colon X\xrightarrow{\pi}Y\dashrightarrow  Z$, where $Y\dashrightarrow  Z$ is the eventual paracanonical map $\varphi_Y$ of  $Y$ (\cite[Proposition~1.5]{J16}). Note that, $\deg\varphi_X\leq 8$ by \cite[Theorem 1.7]{J16}, and hence
		\begin{equation}\label{eq: deg event paracan}
		|G|=\deg\pi  = \deg\varphi_X / \deg\varphi_Y \leq \frac{8}{\deg\varphi_Y}.
		\end{equation}
		If $\deg\varphi_Y\geq 2$, then either $|G| \leq 3$ or $|G|=4$ and $\deg \varphi_X=8$. In the latter case, $\varphi_X$ is birationally a $(\ZZ/2\ZZ)^3$-cover by \cite[Theorem 1.7]{J16}. If $\deg\varphi_Y=1$, then $\varphi_Y$ is birational and hence
		\[
		\chi(Z, \omega_Z) =\chi(Y, \omega_Y) = \chi(X, \omega_X)>0.
		\]
		By \cite[Theorem 1.7]{J16} again, $\varphi_X$  induces a Galois field extenstion $\mathbb C(X)/\mathbb C(Z)$ with Galois group $\ZZ/2\ZZ$ or $\ZZ/3\ZZ$, or $(\ZZ/2\ZZ)^2$. In conclusion, we have $|G|\leq 4$, and if the equality holds then $G\cong (\ZZ/2\ZZ)^2$.
		
		In the case $G\cong (\ZZ/2\ZZ)^2$, we will show that $q(X)= 3$. When $\chi(X, \omega_X)=0$, we automatically have $q(X)=3$ by \cite{CDJ14}. We can thus assume that $\chi(X, \omega_X)=\chi(Y, \omega_Y)>0$.
		If $q(X)>3$, we are in the situation of \cite[Subsection 4.1, Case 1]{J16}. By \cite[Proposition 4.4]{J16}, the quotient map $\pi$ is birationally equivalent to the eventual paracanonical map of $X$. Let $V:=a_X(X)$ be the Albanese image of $X$ with the reduced scheme structure. Since $\dim A_X>3$, $V$ is fibred by an abelian subvariety $K$ such that the quotient $V/K$ is not fibred by any positive dimensional abelian subvariety of $A_X/K$ and any desingularization of $V/K$ is of general type (see \cite[Theorem 10.9]{Ue75}). Let $B:=A/K$ and $V_B:=V/K$.
		We have the following picture:
		\begin{eqnarray*}
			\xymatrix{
				X\ar[drr]_{f}\ar[r]^{\pi} & Y\ar[dr]^{h}\ar[r]& V\ar[d]\ar@{^{(}->}[r]&A_X\ar[d]\\
				& &V_B \ar@{^{(}->}[r] & B.}
		\end{eqnarray*}
		Let $t\in V_B$ be a general point and let $X_t$ and $Y_t$ be   the fibers of $f$ and $h$ over $t$ respectively. We remark that $X_t$ and $Y_t$ may be reducible and each component of $X_t$ or $Y_t$ is of general type. Fix a general torsion line bundle $Q\in \Pic^0(A_X)$. Note that $A_X$ is the Albanese variety of both $X$ and $Y$. We can simply regard $Q$ as a general torsion line bundle on $X$ or on $Y$. By generic vanishing,
		\[
		h^0(X, \omega_X\otimes Q)=\chi(X, \omega_X\otimes Q)=\chi(X, \omega_X)=\chi(Y, \omega_Y)=\chi(Y, \omega_Y\otimes Q)=h^0(Y, \omega_Y\otimes Q).
		\]
		Moreover, since $Q$ is general, we also have $h^i(X_t, \omega_{X_t}\otimes Q)=h^i(Y_t, \omega_{Y_t}\otimes Q)=0$. Thus both $R^if_*(\omega_X\otimes Q)$ and $R^ih_*(\omega_Y\otimes Q)$ are supported on a proper subset of $V_B$. However, by the main theorem of \cite{Ko86a}, these sheaves are torsion-free on $V_B$ if they are not zero. Thus,
		$R^if_*(\omega_X\otimes Q)=R^ih_*(\omega_Y\otimes Q)=0$ for $i\geq 1$. Therefore, for $Q'\in \Pic^0(B)$ general,
		\begin{multline}\label{equal}
		h^0(V_B, f_*(\omega_X\otimes Q)\otimes Q')=\chi(X, \omega_X\otimes Q\otimes Q') \\
		=\chi(Y, \omega_Y\otimes Q\otimes Q')=h^0(V_B, h_*(\omega_Y\otimes Q)\otimes Q').
		\end{multline}
		
		We claim that the inclusion of coherent sheaves $h_*(\omega_Y\otimes Q)\subset f_*(\omega_X\otimes Q)$ is an isomorphism. Assume the contrary, and let $\cQ$ be the \emph{nonzero} quotient sheaf. Then $V^0(\cQ)$ is a proper subset of $\Pic^0(B)$ by \eqref{equal}. On the other hand, by \cite{Ko86a}, these two sheaves are torsion-free and since $V_B$ is not fibred by positive dimensional abelian subvariety, using the Chen--Jiang decomposition for $h_*(\omega_Y\otimes Q)$ and $f_*(\omega_X\otimes Q)$ (see \ref{thm: decomp} and Remark \ref{rmk:torsion}), both of them are M-regular. It follows that $\cQ$ is also M-regular and hence $V^0(\cQ) = \Pic^0(B)$, which is a contradiction.
		
		The equality $h_*(\omega_Y\otimes Q)= f_*(\omega_X\otimes Q)$ implies that $\chi(X_t, \omega_{X_t})=\chi(Y_t, \omega_{Y_t})$.
		Note that $\pi_t\colon X_t\rightarrow Y_t$ is again of degree $4$. The case $\dim X_t=1$ is impossible by Riemann--Roch. The remaining case $\dim X_t=2$ is impossible  by \cite[Theorem 1.6]{J16} or \cite[Proposition 3.3]{BarPS19}.
	\end{proof}
	
	\begin{cor}\label{cor: mAd}
		Let $X$ be any smooth projective threefold of general type  with maximal Albanese dimension. Then $|\Aut_\QQ(X)|\leq 4$, and if the equality holds then $ \Aut_\QQ(X)\cong (\ZZ/2\ZZ)^2$ and $q(X)=3$.
	\end{cor}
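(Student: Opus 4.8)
The plan is to deduce the corollary directly from Theorem~\ref{thm: mAd} by showing that, for a smooth projective threefold $X$ of general type with maximal Albanese dimension, the subgroup $\Aut_\QQ(X)$ is contained in $\Aut_\mo(X)\cap\Aut_A(X)$; the bound $|\Aut_\QQ(X)|\leq 4$ and the classification of the equality case then follow at once.

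First I would verify the two required inclusions. Since $X$ is smooth, the pair $(X,0)$ is log canonical, so Lemma~\ref{lem: Hodge}(i) (or part (ii) via the Hodge decomposition) gives that every $\sigma\in\Aut_\QQ(X)$ acts trivially on $H^i(X,\mo_X)$ for all $i\geq 0$; hence $\Aut_\QQ(X)<\Aut_\mo(X)$. Next, because $X$ is smooth of general type, Proposition~\ref{prop: fix A} shows that $\Aut_\QQ(X)$ acts trivially on the Albanese variety $A_X$, that is, $\Aut_\QQ(X)<\Aut_A(X)$. Combining the two yields the desired containment
\[
\Aut_\QQ(X)\;<\;\Aut_\mo(X)\cap\Aut_A(X).
\]

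Then I would invoke Theorem~\ref{thm: mAd}. A smooth variety has canonical singularities, and $X$ is of maximal Albanese dimension by hypothesis, so the theorem applies and gives $|\Aut_\mo(X)\cap\Aut_A(X)|\leq 4$, with the group isomorphic to $(\ZZ/2\ZZ)^2$ and $q(X)=3$ whenever the equality holds. By the inclusion above, $|\Aut_\QQ(X)|\leq|\Aut_\mo(X)\cap\Aut_A(X)|\leq 4$.

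Finally, for the equality case: if $|\Aut_\QQ(X)|=4$, then the chain $\Aut_\QQ(X)<\Aut_\mo(X)\cap\Aut_A(X)$ together with $|\Aut_\mo(X)\cap\Aut_A(X)|\leq 4$ forces $\Aut_\QQ(X)=\Aut_\mo(X)\cap\Aut_A(X)$, which is precisely the equality case of Theorem~\ref{thm: mAd}; hence $\Aut_\QQ(X)\cong(\ZZ/2\ZZ)^2$ and $q(X)=3$. Since all the genuine work is already packaged in Theorem~\ref{thm: mAd} and in Proposition~\ref{prop: fix A}, I do not expect any substantial obstacle here; the only point requiring care is confirming that the hypotheses of those results—smoothness of $X$ for the $A$-triviality, the existence of a log canonical pair structure for the $\mo$-cohomological triviality, and canonical singularities together with maximal Albanese dimension for the main theorem—are all satisfied, which they are.
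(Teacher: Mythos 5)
Your proposal is correct and follows exactly the paper's own argument: the paper also deduces the corollary by citing Lemma~\ref{lem: Hodge} and Proposition~\ref{prop: fix A} to get the inclusion $\Aut_\QQ(X)<\Aut_\mo(X)\cap\Aut_A(X)$ and then applying Theorem~\ref{thm: mAd}. Your extra care in spelling out the equality case is a harmless elaboration of the same reasoning.
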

	\begin{proof}
		By Lemmas~\ref{lem: Hodge} and \ref{prop: fix A}, we know that $\Aut_\QQ(X)$ is contained in the group $\Aut_{\cO}(X)\cap \Aut_A(X)$ of Theorem~\ref{thm: mAd}.
	\end{proof}
	Note that the case $\mathrm{Aut}_\QQ(X)\cong (\ZZ/2\ZZ)^2$ is realized by an unbounded series of threefolds by \cite[Example 6.1]{Z21}.

	\subsection{The case $\dim a_X(X)<3$}
	In this subsection we deal with irregular threefolds of general type which are not of maximal Albanese dimension. We will denote by $G:=\Aut_\mo(X)$ the $\mo$-cohomologically trivial automorphism subgroup, and $\pi\colon X\rightarrow Y:=X/G$ the quotient morphism.
	\begin{lem}\label{lem: Ad1}
		There is a positive constant $M_1$ such that the following holds. Let $f\colon X\rightarrow B$ be a fibration from a smooth projective threefold of general type onto a smooth projective curve $B$ with $g(B)\geq 2$. Then $|\Aut_{\cO}(X)|\leq M_1$.
	\end{lem}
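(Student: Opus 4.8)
The plan is to first cut $G:=\Aut_{\cO}(X)$ down to the automorphisms preserving every fibre of $f$, and then to read off the bound from a very general fibre, which is a surface of general type.

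\smallskip

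Since every $\sigma\in G$ acts trivially on $H^1(X,\cO_X)$, Lemma~\ref{lem: chi 0}(i) shows that the induced automorphism $\sigma_A$ of $A_X$ is a translation, say $\sigma_A=t_a$. Composing $f$ with an Abel--Jacobi embedding $j\colon B\hookrightarrow J(B)$ and using the universal property of the Albanese morphism, one obtains a homomorphism $g\colon A_X\to J(B)$ with $j\circ f=g\circ a_X$, and feeding in $\sigma_A=t_a$ gives $j\circ(f\circ\sigma)=t_{g(a)}\circ j\circ f$. Comparing images forces $g(a)\in\Stab(j(B))$, which is finite because $g(B)\ge 2$, so $t_{g(a)}$ restricts to an automorphism of $j(B)\cong B$ and $f\circ\sigma=\sigma_B\circ f$ for a corresponding $\sigma_B\in\Aut(B)$. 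Thus every $\sigma\in G$ descends to $B$. Now $\sigma$ acts trivially on $H^{0}(X,\Omega^1_X)$, the complex conjugate of its trivial action on $H^1(X,\cO_X)$, hence trivially on the subspace $f^*H^0(B,\omega_B)$; as $f^*$ is injective, $\sigma_B$ acts trivially on $H^0(B,\omega_B)$. Since $\Aut(B)$ acts faithfully on $H^0(B,\omega_B)$ when $g(B)\ge 2$ (the hyperelliptic involution acting by $-1$), we get $\sigma_B=\id_B$. Therefore $G\subset\Aut(X/B)$: every element of $G$ preserves each fibre of $f$.

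\smallskip

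Fix a very general point $b_0\in B$, so that $F:=X_{b_0}$ is a smooth projective surface of general type with $\omega_F=\omega_X|_F$. Restriction gives a homomorphism $\rho\colon G\to\Aut(F)$, and it is injective: if $\sigma\in G$ restricts to $\id_F$, then $\sigma$ fixes $F$ pointwise, and since $\sigma$ lies over $B$ it acts trivially on $T_{b_0}B$, hence on the normal direction of $F$ in $X$, so $d\sigma=\id$ along $F$; a finite-order automorphism fixing a divisor pointwise with trivial normal action is the identity near that divisor, whence $\sigma=\id_X$.

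\smallskip

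The heart of the argument is to show $\rho(G)\subset\Aut_{\cO}(F)$, i.e.\ that each $\rho(\sigma)$ acts trivially on $H^i(F,\cO_F)$. Because $G$ acts on $X$ over $B$, it acts $\cO_B$-linearly on the sheaves $R^if_*\cO_X$, and by cohomology-and-base-change the induced action on $(R^if_*\cO_X)\otimes k(b_0)\cong H^i(F,\cO_F)$ is exactly $\rho(\sigma)^*$. The Leray spectral sequence for $f$ (degenerate as $B$ is a curve) realises $H^0(B,R^if_*\cO_X)$ and $H^1(B,R^{i-1}f_*\cO_X)$ as subquotients of $H^{*}(X,\cO_X)$, on which $G$ acts trivially; dually $G$ acts trivially on the corresponding cohomology of $R^jf_*\omega_X$. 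The task is to upgrade this triviality on global cohomology over $B$ to triviality on the fibre at the general point $b_0$. I would do this by splitting the $G$-equivariant sheaf $R^if_*\cO_X$ (equivalently its dual $R^{2-i}f_*\omega_{X/B}$) into $G$-isotypic summands and proving that each non-trivial summand is generically $0$. This is where positivity enters: $f_*\omega_{X/B}$ is nef, and after twisting by the ample $\omega_B$ the Chen--Jiang decomposition (Theorem~\ref{thm: decomp} and Remark~\ref{rmk:torsion}), combined with the fact that $G$ fixes every $P\in\Pic^0(B)$, should force the non-trivial isotypic parts to carry no cohomology for any $P\in\Pic^0(B)$, and hence to vanish at the general fibre. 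This transfer step, propagating $\cO$-cohomological triviality from $X$ to the very general fibre $F$, is the main obstacle, and it is exactly where the hypothesis $g(B)\ge 2$ (making $\omega_B$ ample and base-point-free) is used in an essential way.

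\smallskip

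Granting the claim, $\rho$ embeds $G$ into the group $\Aut_{\cO}(F)$ of $\cO$-cohomologically trivial automorphisms of the surface of general type $F$. By the uniform boundedness of such groups for surfaces of general type, which follows from the Bogomolov--Miyaoka--Yau argument of \cite{Cai04}, there is an absolute constant $c$ with $|\Aut_{\cO}(F)|\le c$. Hence $|\Aut_{\cO}(X)|=|G|\le c=:M_1$, independently of $X$ and $f$, as required.
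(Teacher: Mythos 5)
Your reduction to the fibre is sound and matches the paper's opening moves: $G=\Aut_{\cO}(X)$ descends to $B$, acts trivially there because $g(B)\geq 2$, hence preserves every fibre and restricts injectively to a general fibre $F$. The genuine gap is the step you yourself flag as ``the main obstacle'': you never prove that $\rho(G)\subset\Aut_{\cO}(F)$, and this claim is both stronger than what is needed and not reachable by the isotypic-decomposition argument you sketch. The action of $G$ on $H^1(F,\cO_F)\cong (R^1f_*\cO_X)\otimes k(b_0)$ is simply not controlled by the triviality of $G$ on $H^*(X,\cO_X)$: a nontrivial isotypic summand of $R^1f_*\cO_X$ can be a bundle on $B$ with $h^0=h^1=0$ (e.g.\ generic of degree $g(B)-1$ in rank one), so it contributes nothing to the Leray filtration of $H^*(X,\cO_X)$ and its nontriviality as a $G$-representation at the general point is invisible to your hypotheses. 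So ``trivial on the global cohomology over $B$'' does not upgrade to ``trivial on the fibre'' for $R^1f_*\cO_X$, and the Chen--Jiang input you invoke cannot force those summands to vanish.

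What the paper actually proves, and what suffices, is only that $G$ acts trivially on $H^0(F,\omega_F)$, i.e.\ $p_g(F)=p_g(F/G)$. Writing $\pi_*\omega_X=\omega_Y\oplus\cM$ for $Y=X/G$, the $\cO$-triviality gives $p_g(X)=p_g(Y)$, hence $H^0(Y,\cM)=0$; pushing forward to $J(B)$ (here $g(B)\geq 2$ enters: the image is a curve of general type generating $J(B)$, so by Theorem~\ref{thm: decomp} or \cite[Lemma 2.1]{JLT14} the pushforward is M-regular), a \emph{nonzero} M-regular sheaf would have $V^0=\Pic^0$ and in particular nonzero untwisted sections. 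So the pushforward of $\cM$ is zero, its rank $p_g(F)-p_g(F/G)$ vanishes, and $p_g(F)=p_g(F/G)$. Note the M-regularity is used in the direction ``no sections $\Rightarrow$ zero sheaf,'' not ``no cohomology for any twist.'' With this in hand one does not need $\Aut_{\cO}(F)$ at all: the canonical map of $F$ factors through $F\rightarrow F/G$, so Beauville's bounds ($\deg\varphi_F\leq 9$, or a pencil of curves of genus $\leq 5$, once $\chi(\cO_F)\geq 31$) give $|G|\leq 336$, and the surfaces with $\chi(\cO_F)<31$ form a bounded family. Your final appeal to \cite{Cai04} is also not quite right as cited, since that paper bounds $\Aut_\QQ$ of surfaces rather than $\Aut_{\cO}$; but the essential missing piece is the unproved transfer step.
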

	\begin{proof}
		Since $f^*\colon H^1(B, \mo_B)\rightarrow H^1(X, \mo_X)$ is injective and
		$G:=\Aut_\mo(X)$ acts trivially on $H^1(X, \mo_X)$, $G$ induces a trivial action on $H^1(B, \mo_B)$. Since $g(B)\geq 2$, $G$ acts trivially on $B$, that is, $G\subset \Aut_B(X)$, the group of automorphisms preserving each fiber of $f$.
		
		Let $F$ be a general fiber of $f$. Then $G$ acts on $F$ via the injective homomorphism $G\hookrightarrow \Aut(F)$, $\sigma\mapsto \sigma|_F$. Let $\pi\colon X\rightarrow Y:=X/G$ and
		$\pi_F: F\rightarrow F' := F/G$ be the quotient morphisms. Since $B$ is of general type, by Theorem \ref{thm: decomp} or  \cite[Lemma 2.1]{JLT14}, both $a_{X*}\omega_X$ and $a_{Y*}\omega_Y$ are M-regular. Moreover, $p_g(X)=p_g(Y)$, thus $a_{Y*}\omega_Y=a_{X*}\omega_X$. Hence  $p_g(F)=p_g(F')$.
		
		By \cite[Propositions 2.1 and 4.1]{Bea79},  if $\chi(F, \mathcal O_F)\geq 31$, then the canonical map $\varphi_F$ of $F$ is either of degree $\leq 9$ or is a fibration such that the genus $g$ of a general fiber is $\leq 5$. Thus in this case we obtain
		\[
		|G|\leq
		\begin{cases}
		\deg\varphi_F \leq 9 & \text{if $\varphi_F$ is generically finite}\\
		84(g-1) \leq 336 &\text{if $\varphi_F$ is composed with a pencil}
		\end{cases}
		\]
		where the inequality in second case follows from the Hurwitz bound of the full automorphism group of a curve with genus $g\geq 2$. On the other hand, surfaces $F$ with $\chi(F, \mathcal O_F)<31$ form a birationally bounded family and thus the order of their automorphism groups are bounded by a constant. As a conclusion, there exists a constant $M_1$ such that $|G|\leq M_1$.
	\end{proof}
	
	\begin{cor}\label{cor: Ad1}
		The inequality $|\Aut_{\cO}(X)|\leq M_1$ holds for any smooth projective threefold $X$ of general type with $q(X)\geq 2$ and $\dim a_X(X)=1$.
	\end{cor}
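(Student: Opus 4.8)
The plan is to deduce this directly from Lemma~\ref{lem: Ad1} by producing, out of the Albanese map, a fibration of $X$ over a curve of genus at least $2$. Since $\dim a_X(X)=1$, I would first form the Stein factorization of the Albanese morphism,
\[
a_X\colon X\xrightarrow{\ f\ } B\xrightarrow{\ a_B\ } A_X,
\]
where $f$ has connected fibers and $B$ is the normalization of the image curve $a_X(X)\subset A_X$. As $\dim B=\dim a_X(X)=1$ and $B$ is normal, $B$ is automatically a smooth projective curve, and $f\colon X\rightarrow B$ is a fibration from the smooth projective threefold $X$ of general type onto $B$.

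The key remaining point is to check that $g(B)\geq 2$, which is exactly the hypothesis needed to invoke Lemma~\ref{lem: Ad1}. For this I would use the discussion of the Albanese morphism in Section~\ref{sec: prelim}: by the universal property together with the property of the Stein factorization, $a_B\colon B\rightarrow A_X$ is precisely the Albanese morphism of $B$ and $A_B\cong A_X$. Since $B$ is a smooth projective curve, its Albanese variety is its Jacobian, so
\[
g(B)=\dim A_B=\dim A_X=q(X)\geq 2.
\]

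With $g(B)\geq 2$ established, Lemma~\ref{lem: Ad1} applies verbatim to the fibration $f\colon X\rightarrow B$ and yields $|\Aut_{\cO}(X)|\leq M_1$ with the same constant $M_1$, completing the proof. There is essentially no hard step here: the statement is a clean specialization of Lemma~\ref{lem: Ad1}, and the only content is the identification of the Stein factorization base with a curve of genus $q(X)$ via the universal property of the Albanese morphism. The mild point to be careful about is simply that the normality of the Stein factor forces $B$ to be smooth, so that Lemma~\ref{lem: Ad1} is literally applicable.
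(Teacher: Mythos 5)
Your proof is correct and follows essentially the same route as the paper: the paper likewise factors $a_X$ as a fibration $f\colon X\to C$ onto a smooth curve $C\hookrightarrow A_X$ with $g(C)=q(X)\geq 2$ and then invokes Lemma~\ref{lem: Ad1}. Your version merely spells out the Stein factorization and the identification $g(B)=\dim A_B=q(X)$ via the universal property in more detail, which is fine.
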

	\begin{proof}
		Since $q(X)\geq 2$ and $\dim a_X(X)=1$, we infer that the Albanese image of $X$ factors as $a_X\colon X\xrightarrow{f} C \hookrightarrow A_X$, where $g(C)=q(X)$ and $f$ is a fibration. Now apply Lemma~\ref{lem: Ad1}.
	\end{proof}
	
	\begin{rema}We can indeed work out an explicit bound for $M_1$. Note that in the proof of Lemma~\ref{lem: Ad1}, when $\chi(F, \mathcal O_F)<31$, we have $\mathrm{vol}(K_F)\leq 9\chi(F, \mathcal O_F)\leq 180$ by the Bogomolov--Miyaoka--Yau inequality. Thus $|G|\leq 42^2\mathrm{vol}(K_F)\leq 317520$ by \cite{X95}, and we may take $M_1=317520$.
	\end{rema}
	
	\begin{prop}\label{prop: Ad2}
		Let $X$ be a smooth threefold of general type with $q(X)\geq 3$ and $\dim a_X(X)=2$. Then $|\Aut_{\cO}(X)|\leq M_1$.
	\end{prop}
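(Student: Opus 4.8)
The plan is to run the same Chen--Jiang comparison as in the proof of Theorem~\ref{thm: mAd}, after first peeling off the situations already covered by Lemma~\ref{lem: Ad1}. Set $G:=\Aut_\mo(X)$, let $\pi\colon X\rightarrow Y:=X/G$ be the quotient, and recall that $G$ acts trivially on $H^*(X,\mo_X)$, so that $\chi(Y,\mo_Y)=\chi(X,\mo_X)$ and $q(Y)=q(X)$. By Lemma~\ref{lem: chi 0} every $\sigma\in G$ induces a translation $\sigma_A$ of $A_X$; writing $T:=\psi_A(G)\subset A_X$ for the finite group of these translations and $K:=G\cap\Aut_A(X)=\ker(\psi_A|_G)$, we have $|G|=|K|\,|T|$, and $T\neq 0$ forces $\chi(X,\omega_X)=0$. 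I would first analyse the Albanese image $V:=a_X(X)$, a surface generating $A_X$: by Ueno's structure theorem (\cite{Ue75}) there is an abelian subvariety $K_0\subseteq A_X$ with $V$ fibred by $K_0$ over $V_B:=V/K_0\subseteq B:=A_X/K_0$, where $V_B$ is of general type and not fibred by any positive-dimensional abelian subvariety of $B$. Since $V$ generates $A_X$ and $q(X)\geq 3$, the case $\dim K_0=2$ is excluded, so $\dim K_0\in\{0,1\}$.

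If $\dim K_0=1$, then $V_B$ is a smooth curve of general type, i.e.\ $g(V_B)\geq 2$. Composing $a_X$ with $A_X\rightarrow B$ and taking the Stein factorization $X\rightarrow B'\rightarrow V_B$ produces a fibration of $X$ onto a smooth curve $B'$ with $g(B')\geq g(V_B)\geq 2$; Lemma~\ref{lem: Ad1} then gives $|G|=|\Aut_\mo(X)|\leq M_1$ directly.

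The remaining case is $\dim K_0=0$, so that $V$ itself is of general type and not fibred by any positive-dimensional abelian subvariety. Here I would mimic the proof of Theorem~\ref{thm: mAd}. Restricting first to $K=G\cap\Aut_A(X)$, the map $a_X$ descends to $a_{Y_K}\colon Y_K:=X/K\rightarrow A_X$ with the same image $V$, and $\omega_{Y_K}=(\pi_{K*}\omega_X)^K$ sits inside $\pi_{K*}\omega_X$ as the invariant summand, whence an inclusion $a_{Y_K*}\omega_{Y_K}\hookrightarrow a_{X*}\omega_X$ with quotient $\cQ$. Because $V$ is not fibred by positive-dimensional abelian subvarieties, the Chen--Jiang decomposition (Theorem~\ref{thm: decomp} and Remark~\ref{rmk:torsion}) shows that $a_{X*}\omega_X$ and $a_{Y_K*}\omega_{Y_K}$ are M-regular on $A_X$, hence so is $\cQ$. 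On the other hand $\chi(X,\mo_X)=\chi(Y_K,\mo_{Y_K})$ together with generic vanishing gives $h^0(X,\omega_X\otimes P)=h^0(Y_K,\omega_{Y_K}\otimes P)$ for general $P\in\Pic^0(A_X)$, so that $V^0(\cQ)\neq\Pic^0(A_X)$. As in Theorem~\ref{thm: mAd}, a nonzero M-regular sheaf has $V^0=\Pic^0$, so $\cQ=0$ and $a_{X*}\omega_X=a_{Y_K*}\omega_{Y_K}$. Comparing generic ranks over $V$ yields $g(F)=g(F')$, where $F$ is the general fibre of the induced fibration $X\rightarrow V$ (a curve, since $\dim V=2$) and $F'=F/K$ is the corresponding fibre of $Y_K\rightarrow V$.

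The final step is to extract the bound from $g(F)=g(F')$, and this is where I expect the main difficulty to lie. Since $F\rightarrow F'$ has degree $|K|$, equality of genera forces $g(F)\leq 1$ as soon as $|K|>1$; the value $g(F)=0$ is excluded because a threefold fibred in rational curves is uniruled and cannot be of general type, leaving $g(F)=1$, i.e.\ an elliptic fibration $X\rightarrow S$ over the Stein surface $S$ of $a_X$. The hard part is then to bound the fibrewise automorphisms: an automorphism acting by a nonzero translation on the general elliptic fibre fixes every holomorphic form and is therefore automatically $\mo$-cohomologically trivial, so the M-regularity comparison does not see it. I expect to control these translations through the global structure of the elliptic fibration together with the holomorphic Lefschetz fixed point theorem, while the part of $G$ acting nontrivially on $S$ is bounded by the surface results quoted in the introduction applied to $S$ (of general type with $q(S)=q(X)\geq 3$), and the translation group $T$ on $A_X$ is treated using that $T$ stabilizes the general type surface $V$ and that $T\neq 0$ forces $\chi(X,\omega_X)=0$. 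Assembling these bounds gives $|G|=|K|\,|T|\leq M_1$.
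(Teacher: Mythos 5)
Your skeleton --- the dichotomy on the Ueno fibration of the Albanese image $V$, the reduction to Lemma~\ref{lem: Ad1} when the Ueno quotient is a curve of genus $\geq 2$, and a Chen--Jiang/M-regularity rank comparison when $V$ is of general type --- is the same as the paper's, but your argument does not close in the general type case, in two places. First, the case you single out as ``the hard part'', namely $g(F)=1$, is vacuous: if the general fibre of $X\rightarrow S$ were elliptic, then easy addition of Kodaira dimensions, $\kappa(X)\leq \kappa(F)+\dim S$, would give $\kappa(X)\leq 2$, contradicting that $X$ is of general type. Hence $g(F)\geq 2$, and your Riemann--Hurwitz computation $2g(F)-2=|K|\,(2g(F')-2)+\delta$ together with $g(F)=g(F')$ forces $|K|=1$ outright; there are no fibrewise translations to control and no appeal to the holomorphic Lefschetz theorem or to surface results for $S$ is needed. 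As written you leave this step as an expectation rather than an argument, and that is a genuine gap, even though it is closable in one line.

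Second, the translation part $T=\psi_A(G)$ is never actually bounded. Finiteness of the stabilizer of the general type surface $V$ in $A_X$ gives no \emph{uniform} bound, and the remark that $T\neq 0$ forces $\chi(X,\omega_X)=0$ is left dangling. You could finish by noting that when $V$ is of general type, $a_{X*}\omega_X$ is a nonzero M-regular sheaf, so $\chi(X,\omega_X)=h^0(A_X, a_{X*}\omega_X\otimes P)>0$ for general $P\in\Pic^0(A_X)$, which rules out $T\neq 0$. The paper instead disposes of $K$ and $T$ simultaneously: it first passes to the \'etale quotient $A_X\rightarrow A_X/H$ with $H=\psi_A(G)$, observes that the image $Z/H$ remains of general type, and runs the M-regularity comparison for the full quotient $Y=X/G$ over $A_X/H$; if $G$ were nontrivial, the summand $a_{Y*}\cM$ of $a_{Y*}\pi_*\omega_X$ would be nonzero and M-regular, hence would have a global section, forcing $p_g(X)>p_g(Y)$, a contradiction. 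Either route works, but your writeup as it stands proves only part of the statement.
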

	\begin{proof}
		Since $a_X$ is not surjective, the image $Z$ of the Albanese morphism (or rather its desingularization) has Kodaira dimension $\geq 1$.
		
		Let $H$ be the image of $\Aut_{\cO}(X)$ in $\Aut(A_X)$; see \eqref{eq: induce}. By Lemma \ref{lem: chi 0}, $H$ acts on $A_X$ by translations. Thus the quotient morphism $A_X\rightarrow A_X/H$ is \'etale.
		
		Consider the commutative diagram
		\begin{eqnarray*}
			\xymatrix{
				X\ar[r]^{\pi}\ar@/_2pc/[dd]_{a_X}\ar[d] & Y\ar[d]\ar@/^2pc/[dd]^{a_Y}\\
				Z\ar[r]\ar@{^(->}[d]& Z/H\ar@{^(->}[d]\\
				A_X \ar[r] & A_X/H}
		\end{eqnarray*}
		where $Y=X/G$ and $\pi\colon X\rightarrow Y$ is the quotient map. Since the morphism $Z\rightarrow Z/H$ is \'etale, the Kodaira dimension of the desingularization of $Z/H$ is the same as the Kodaira dimension of the desingularization of $Z$.
		
		If $Z$ is of general type, then so is $Z/H$. Hence $a_{Y*}\omega_Y$ and $a_{Y*}\pi_*\omega_X$ are M-regular sheaves supported on $Z/H$ by Theorem~\ref{thm: decomp}. By the trace map, we write $\pi_*\omega_X=\omega_Y\oplus \cM$. Suppose that $G$ is non-trivial. Then, since the general fibers of $a_Y$ are curves, the rank of $a_{Y*}\pi_*\omega_X$  is strictly larger than that of $a_{Y*}\omega_Y$. Hence $a_{Y*}\cM$ is non-zero and, being M-regular, has a non-zero global section. But then $p_g(X)=h^0(A_X/H, a_{Y*}\pi_*\omega_X)>h^0(A_X/H, a_{Y*}\omega_Y) = p_g(Y)$, which is a contradiction.

		If $Z$ has Kodaira dimension $1$, then it is an elliptic fiber bundle over a curve $C$ with genus $g(C)\geq 2$, by \cite[Theorem 10.9]{Ue75}. Therefore, the Stein factorization $X\to \tilde{C}$ of the natural map $X\to C$ is a fibration with $g(\tilde{C})\geq 2$. It follows that $|G|\leq M_1$, where $M_1$ is as in Lemma~\ref{lem: Ad1}.
	\end{proof}
	
	Using the same argument for Corollary~\ref{cor: mAd}, we obtain
	\begin{cor}\label{cor: Ad2}
		Let $X$ be a  smooth projective threefold of general type such that $\dim a_X(X)=2$ and $q(X)\geq 3$. Then $|\Aut_\QQ(X)|\leq M_1$, where $M_1$ is the constant appearing in Lemma~\ref{lem: Ad1}.
	\end{cor}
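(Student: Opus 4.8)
The plan is to reduce this corollary to Proposition~\ref{prop: Ad2} by exhibiting $\Aut_\QQ(X)$ as a subgroup of $\Aut_\cO(X)$, exactly along the lines of the proof of Corollary~\ref{cor: mAd}. The hypotheses $\dim a_X(X)=2$ and $q(X)\geq 3$ match those of Proposition~\ref{prop: Ad2} verbatim, so once the inclusion of groups is established the numerical bound is immediate.

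First I would invoke Lemma~\ref{lem: Hodge}(i). Since $X$ is smooth, the pair $(X,0)$ is log canonical (take the boundary $\Delta=0$), so the lemma applies and shows that every $\sigma\in\Aut_\QQ(X)$ acts trivially on $H^i(X,\mo_X)$ for all $i\geq 0$. By the very definition of $\Aut_\cO(X)$ this yields the containment $\Aut_\QQ(X)\subset\Aut_\cO(X)$.

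Then I would simply apply Proposition~\ref{prop: Ad2}, which gives $|\Aut_\cO(X)|\leq M_1$. Combining this with the inclusion above produces $|\Aut_\QQ(X)|\leq|\Aut_\cO(X)|\leq M_1$, as claimed.

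I expect essentially no obstacle in this step. Unlike the maximal Albanese dimension case, where Theorem~\ref{thm: mAd} bounds the intersection $\Aut_\cO(X)\cap\Aut_A(X)$ and one additionally needs Proposition~\ref{prop: fix A} to place $\Aut_\QQ(X)$ inside $\Aut_A(X)$, here Proposition~\ref{prop: Ad2} already bounds the full group $\Aut_\cO(X)$. Consequently the single inclusion coming from Lemma~\ref{lem: Hodge} suffices, and no control of the induced action on the Albanese variety is required.
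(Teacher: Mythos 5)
Your proposal is correct and matches the paper's argument: the paper proves Corollary~\ref{cor: Ad2} by "the same argument as for Corollary~\ref{cor: mAd}", i.e.\ using Lemma~\ref{lem: Hodge} to place $\Aut_\QQ(X)$ inside $\Aut_\cO(X)$ and then applying Proposition~\ref{prop: Ad2}. Your observation that Proposition~\ref{prop: fix A} is not needed here (since Proposition~\ref{prop: Ad2} bounds all of $\Aut_\cO(X)$ rather than its intersection with $\Aut_A(X)$) is also accurate.
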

	
	The rest of this section is devoted to the proof of the following
	\begin{prop}\label{prop: Ad2 simple}
		There is a positive constant $M_2$ such that the following holds. Let $X$ be a smooth threefold of general type with $q(X)=\dim a_X(X)=2$ such that its Albanese variety $A_X$ is a simple abelian surface. Then $|\Aut_{\cO}(X)|\leq M_2$.
	\end{prop}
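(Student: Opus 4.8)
The plan is to combine the Chen--Jiang decomposition on the simple abelian surface $A_X$ with a volume comparison between $X$ and the quotient $Y:=X/G$, where $G:=\Aut_{\mo}(X)$ and $\pi\colon X\to Y$ is the quotient morphism. Since $\dim a_X(X)=\dim A_X=2$, the Albanese map $a_X\colon X\to A_X$ is surjective with general fibre a smooth curve $F$, and by easy addition $\kappa(F)\geq\kappa(X)-\dim A_X=1$, so $g:=g(F)\geq 2$. Because $A_X$ is simple, every nontrivial quotient abelian variety of $A_X$ is $A_X$ itself; hence Theorem~\ref{thm: decomp} together with Remark~\ref{rmk:torsion} yields a splitting $a_{X*}\omega_X\cong\cT_X\oplus\cM_X$ in which $\cT_X$ is a direct sum of torsion line bundles in $\Pic^0(A_X)$ and $\cM_X$ is M-regular.

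First I would prove $\chi(X,\omega_X)>0$. On an abelian surface a torsion line bundle has Euler characteristic $0$ while a nonzero M-regular sheaf has Euler characteristic $>0$; combined with $\omega_X=\omega_{X/A_X}$ and a relative-duality computation showing $\chi(A_X,R^1a_{X*}\omega_X)=0$, this gives $\chi(X,\omega_X)=\chi(A_X,\cM_X)$. If this vanished, then $\cM_X=0$ and $a_{X*}\omega_{X/A_X}$ would be numerically flat, so its restriction to every curve $C\subset A_X$ would have degree $0$; the Arakelov-type characterisation of isotriviality for curve fibrations over a curve would then make the moduli map constant on every curve, hence constant, forcing $a_X$ to be isotrivial and $\kappa(X)=\kappa(F)=1$---contradicting that $X$ is of general type. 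Thus $\cM_X\neq0$ and $\chi(X,\omega_X)>0$. By Lemma~\ref{lem: chi 0} no element of $G$ can then induce a nontrivial translation of $A_X$, so $G$ fixes $A_X$ pointwise; in particular $A_Y\cong A_X$ and every $\sigma\in G$ preserves the fibres of $a_X$.

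Next I exploit the $\mo$-cohomological triviality of $G$. Writing $\pi_*\omega_X=\omega_Y\oplus\cR$ for the trace splitting and taking $G$-invariants in $H^i(X,\omega_X)=H^i(Y,\pi_*\omega_X)$, the triviality of the $G$-action gives $H^i(Y,\cR)=0$ for all $i$ and, crucially, $\chi(Y,\omega_Y)=\chi(X,\omega_X)$. Since $H^0(A_X,a_{Y*}\cR)=H^0(Y,\cR)=0$, the M-regular part of $a_{Y*}\cR$ must vanish, so the M-regular summands of $a_{X*}\omega_X$ and $a_{Y*}\omega_Y$ coincide and the rank difference $g-g'=\rank a_{X*}\omega_X-\rank a_{Y*}\omega_Y$ (with $g'$ the genus of the fibre $F/G$ of $a_Y$) is carried entirely by the nontrivial torsion line bundles of $a_{Y*}\cR$.

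Finally I would bound $|G|$ by volumes. As $\pi$ is finite of degree $|G|$ we have $K_X=\pi^*K_Y+R$ with $R\geq0$, hence $\vol(K_X)\geq|G|\,\vol(K_Y)$. Passing to a minimal model of $X$ and using the Miyaoka--Yau inequality gives $\vol(K_X)\leq C_1\chi(X,\omega_X)$, while the Noether-type inequality of the appendix, applied to the canonical (quotient) singularities of $Y$, gives $\vol(K_Y)\geq C_2\chi(Y,\omega_Y)-C_3$ for universal constants $C_1,C_2,C_3>0$. Since $\chi(X,\omega_X)=\chi(Y,\omega_Y)>0$, when this common value is large the two linear estimates force $|G|\leq 2C_1/C_2$, and when it is bounded one instead uses the universal positive lower bound $\vol(K_Y)\geq v_0$ for canonical threefolds of general type to get $|G|\leq\vol(K_X)/\vol(K_Y)\leq C_1\chi(X,\omega_X)/v_0$; taking $M_2$ to be the larger resulting constant completes the proof. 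I expect the main obstacle to be precisely this lower estimate for $\vol(K_Y)$: because $Y$ is singular one really needs the Noether-type inequality for canonical (indeed log canonical) threefolds established in the appendix, matched against a Miyaoka--Yau bound valid on the possibly non-Gorenstein minimal model of $X$; a secondary delicate point is the isotriviality argument used to secure $\chi(X,\omega_X)>0$.
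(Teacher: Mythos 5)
Your middle step (the trace splitting $\pi_*\omega_X=\omega_Y\oplus\cR$, the vanishing $H^i(Y,\cR)=0$, and the observation that the M-regular part of $a_{Y*}\cR$ must vanish so that $a_{Y*}\cR$ is a sum of nontrivial torsion line bundles) is exactly Lemma~\ref{lem: M} of the paper. But the two ends of your argument have genuine gaps. First, the claim $\chi(A_X,R^1a_{X*}\omega_X)=0$ is false in general: by Koll\'ar, $R^1a_{X*}\omega_X\cong a_{S*}\omega_S$ where $X\to S\to A_X$ is the Stein factorization, so this Euler characteristic equals $\chi(S',\omega_{S'})$ for a resolution $S'$ of $S$, which is strictly positive whenever $\kappa(S)=2$ --- a case that really occurs and that the paper has to confront head-on (Lemma~\ref{lem: ST}). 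So your derivation of $\chi(X,\omega_X)>0$, and with it your route to the triviality of the $G$-action on $A_X$, does not go through; the paper instead proves that triviality (Lemma~\ref{lem: B=A}) by pulling back along the \'etale quotient $A_X\to A_X/\psi_A(G)$ and showing that a nontrivial translation would force $q(Y)>2$ via the Chen--Jiang decomposition.

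The second and more serious gap is the endgame. The inequality $\vol(K_X)\leq C_1\chi(X,\omega_X)$ is the Miyaoka--Yau inequality in the form $\vol(K_X)\leq 72\chi(X,\omega_X)$, and this is only available when $X$ admits a \emph{Gorenstein} minimal model; for a general smooth threefold of general type the minimal models are terminal but non-Gorenstein, and the Riemann--Roch formula acquires unbounded correction terms from the baskets of singularities, so no such linear bound in $\chi$ is known. This is precisely why the Gorenstein hypothesis appears in Theorem~\ref{thm: pg} (case (iv)) and why the irregular cases are treated by entirely different means. What the paper actually does here is exploit the structure you set up in the middle step: it takes an abelian \'etale cover $\mu\colon\wA_X\to A_X$ trivializing the torsion line bundles in $R^1a_{Y*}\cM$, base-changes the tower $X\to Y\to S\to A_X$, shows via Koll\'ar's splitting that the relative irregularity of the induced curve fibration satisfies $q_{\tilde f}\geq (\deg a_S)(g(C)-g(D))$, restricts to a general hyperplane section $B\subset\wS$, and then invokes Xiao's theorem that a non-birationally-trivial fibration of genus-$g$ curves over a curve has $q_{f}\leq\frac{5g+1}{6}$. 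Combined with Riemann--Hurwitz for $C\to D=C/G$, this forces either $g(C)\leq 7$ (whence $|G|\leq 504$ by Hurwitz) or $|G|\leq 12$. Without replacing your Miyaoka--Yau step by an argument of this kind, the proof does not close.
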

	We need some preparations for the proof.
	
	Assume that $X$ is as in Proposition~\ref{prop: Ad2 simple}. Since $X$ is of general type, $V^0(a_{X*}\omega_X)$ generates $\Pic^0(A_X)$ by \cite{JS15}. By the simplicity of  $\Pic^0(A_X)$, we conclude that
	\begin{equation}\label{eq: V0}
	V^0(a_{X*}\omega_X)=\Pic^0(A_X)
	\end{equation}
	In particular, $p_g(X)>0$.
	
	Let $f\colon X\rightarrow S$ be the Stein factorization of $a_X\colon X\rightarrow A_X$. Then $S$ is a normal projective surface which is finite over the simple abelian surface $A_X$.
	Consider
	$G:=\Aut_{\cO}(X)$
	and
	its
	images
	$\psi_S(G)\subset\Aut(S)$
	and
	$\psi_A(G)\subset\Aut(A_X)$.
	Then we have the following commutative diagram
	\begin{equation}\label{eq: quotient}
	\begin{tikzcd}
	X \arrow[r, "f"] \arrow[d,"\pi"'] \arrow[rr, bend left =45, "a_X"]& S \arrow[d, "\pi_S"] \arrow[r, "a_S"] & A_X\arrow[d, "\pi_A"]\\
	Y \arrow[r, "h"]  \arrow[rr, bend right =45, "a_Y"']& T \arrow[r, "a_T"] & B
	\end{tikzcd}
	\end{equation}
	where $Y=X/G,\, T=S/\psi_S(G),\, B=A_X/\psi_A(G)$ are the quotient varieties, and $\pi,\,\pi_S,\, \pi_A$ are the quotient maps.  One can check that the morphism $Y\rightarrow B$ is indeed the Albanese morphism of $Y$.
	
	\begin{lem} \label{lem: B=A}
		$\psi_A\colon G\rightarrow \Aut(A_X)$ is trivial, so $B=A_X$ and the quotient map $\pi_A$ is the identity.
	\end{lem}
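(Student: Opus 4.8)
The plan is to argue by contradiction: from a hypothetical nontrivial translation in $\psi_A(G)$ I will build an étale cover of $Y$ along which the paragenus $p_g$ cannot behave as the Chen--Jiang decomposition requires. Throughout recall that $X$, being of general type, has finite automorphism group, so $G$ is finite and every $\sigma\in G$ has finite order. By Lemma~\ref{lem: chi 0}(i) each $\sigma_A=\psi_A(\sigma)$ is a translation of $A_X$; hence $T:=\psi_A(G)$ is a finite (abelian) subgroup of $A_X$ acting by fixed-point-free translations, and $\pi_A\colon A_X\to B=A_X/T$ is étale. Suppose for contradiction that $T\neq\{0\}$.

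First I introduce the intermediate quotient. Let $G_0:=\ker(\psi_A|_G)=G\cap\Aut_A(X)$ and $Y_0:=X/G_0$, so that $A_{Y_0}=A_X$, the residual group $T\cong G/G_0$ acts on $Y_0$, and $Y=Y_0/T$. Since $T$ acts by fixed-point-free translations on $A_X=A_{Y_0}$ and $a_{Y_0}$ is $T$-equivariant, $T$ acts freely on $Y_0$; hence $Y_0\to Y$ is étale and is precisely the pullback of $\pi_A$, i.e. $Y_0\cong Y\times_B A_X$. Because $G$, and hence $G_0$, acts trivially on $H^0(X,\omega_X)$, one gets $p_g(Y_0)=p_g(Y)=p_g(X)$.

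The crux is then a global-section count. Étale base change along the Cartesian square relating $a_{Y_0}$ and $a_Y$ gives $a_{Y_0*}\omega_{Y_0}\cong\pi_A^*a_{Y*}\omega_Y$, and the projection formula together with $\pi_{A*}\cO_{A_X}=\bigoplus_{\chi\in\PT}L_\chi$ (the $L_\chi\in\Pic^0(B)$ being the torsion line bundles of the cover, $L_0=\cO_B$) yields $p_g(Y_0)=\sum_{\chi\in\PT}h^0(B,a_{Y*}\omega_Y\otimes L_\chi)$. The $\chi=0$ summand already equals $p_g(Y)=p_g(Y_0)$, forcing $h^0(B,a_{Y*}\omega_Y\otimes L_\chi)=0$ for every $\chi\neq0$. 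On the other hand $Y$ is of general type and $B$ is simple, so $V^0(a_{Y*}\omega_Y)$ generates $\Pic^0(B)$ by \cite{JS15}; by the Chen--Jiang decomposition (Theorem~\ref{thm: decomp}, Remark~\ref{rmk:torsion}) together with the simplicity of $B$, every summand of $a_{Y*}\omega_Y$ is either supported on a point (contributing finitely many points to $V^0$) or M-regular on all of $B$, and a full $V^0$ therefore forces at least one M-regular summand $\cF\otimes Q$. For such a summand $V^0(\cF\otimes Q)=\Pic^0(B)$, whence $h^0(B,a_{Y*}\omega_Y\otimes L_\chi)\geq h^0(B,\cF\otimes Q\otimes L_\chi)>0$ for all $\chi$, contradicting the vanishing above. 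Thus $T=\{0\}$, $\psi_A(G)$ is trivial, and $\pi_A=\id$.

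The main obstacle I anticipate is the geometric bookkeeping of the middle step: verifying that $T$ acts freely on $Y_0$ and that $Y_0\to Y$ is exactly the pullback of $\pi_A$, so that étale base change legitimately identifies $a_{Y_0*}\omega_{Y_0}$ with $\pi_A^*a_{Y*}\omega_Y$, as well as running the Chen--Jiang decomposition on the singular quotient $Y$ (its quotient singularities are klt, so one either invokes the klt version cited in Remark~\ref{rmk:torsion} or passes to a resolution, using that $Y$ has rational singularities). The simplicity of $B$ is exactly what makes the argument work, since it guarantees that the positive-dimensional part of the decomposition is genuinely M-regular and hence has full cohomological support; this is the feature that distinguishes the present case from the generically finite situation of Proposition~\ref{prop: Ad2}.
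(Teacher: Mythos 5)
Your proof is correct and follows the paper's skeleton almost exactly up to the key vanishing: like the paper, you pass to the intermediate quotient $Y_0=X/\ker(\psi_A|_G)$ (the paper's $Y'$), identify it with $Y\times_B A_X$, use $\pi''_*\omega_{Y_0}=\bigoplus_{P\in\ker\pi_A^*}(\omega_Y\otimes P)$ together with $p_g(Y_0)=p_g(Y)$ to conclude that $H^0(Y,\omega_Y\otimes P)=0$ for every nontrivial $P\in\ker\pi_A^*$, i.e.\ that $V^0(a_{Y*}\omega_Y)$ is a proper subset of $\Pic^0(B)$. Where you diverge is the endgame. The paper concludes from the Chen--Jiang decomposition and the simplicity of $B$ that $a_{Y*}\omega_Y=\bigoplus_i Q_i$ with $Q_i$ torsion, notes that $p_g(Y)=p_g(X)>0$ forces some $Q_i=\mo_B$, and then uses the Leray/Koll\'ar spectral sequence to count trivial summands and deduce $q(Y)>2$, contradicting $q(Y)=q(X)=2$. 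You instead apply the result of \cite{JS15} to $Y$ itself (which is of general type with Albanese fiber dimension one over the simple $B$) to force $V^0(a_{Y*}\omega_Y)$ to generate, hence equal, $\Pic^0(B)$, so the decomposition must contain an M-regular summand whose full $V^0$ contradicts the vanishing directly. Both arguments are valid; yours trades the irregularity count for a second invocation of \cite{JS15} (essentially re-deriving the analogue of the paper's equation \eqref{eq: V0} for $Y$ rather than for $X$), and you correctly flag the only points needing care, namely the identification $Y_0\cong Y\times_B A_X$ and the passage to a resolution of the klt quotient $Y$ before invoking \cite{JS15} and the Chen--Jiang decomposition.
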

	\begin{proof}
		Assume on the contrary that $\psi_A(G)$ is nontrivial. Note that $\psi_A(G)$ consists of translations of $A_X$, so $\pi\colon A_X\rightarrow B=A_X/\psi_A(G)$ is \'etale.
		Let $Y'\cong X/\ker(\psi_A)$. Then $Y=Y'/\bar G$, where $\bar G =G /\ker(\psi_A)$, and we have a commutative diagram:
		\begin{eqnarray*}
			\xymatrix{
				X\ar[r]^{\pi'}\  \ar[d]_{a_X}& Y'\ar[r]^{\pi''}\ar[d]_{a_{Y'}}& Y \ar[d]^{a_Y}\\
				A_X\ar@{=}[r] & A_X\ar[r]^{\pi_A} & B.
			}
		\end{eqnarray*}
		where $\pi'\colon X\rightarrow Y'$ and $\pi''\colon Y'\rightarrow Y$ are the quotient maps. By the universal property of fiber products, one sees easily that $Y'= Y\times_B A_X$, and $\pi'\colon Y'\rightarrow Y$ and $a_{Y'}\colon Y'\rightarrow A_X$ are the projections under this identification. Since $\displaystyle \pi_{A*}\mo_A = \bigoplus_{P\in \ker \pi_A^*} P$, where $\pi_A^*\colon \Pic^0(B)\rightarrow \Pic^0(A_X)$ is the pull-back, we have
		\begin{equation}\label{eq: KY'}
		\pi''_*\omega_{Y'}=\bigoplus_{P\in \ker \pi_A^*} (\omega_Y\otimes P).
		\end{equation}
		
		Since $G$ acts trivially on $H^0(X, \omega_X)$, we have 
		\[
		H^0(X, \omega_X)\cong H^0(Y', \omega_{Y'})\cong H^0(Y, \omega_Y).
		\] 
		Thus this implies that for $P$ non-trivial in \eqref{eq: KY'}, $H^0(Y, \omega_Y\otimes P)=0$. Thus $V^0(a_{Y*}\omega_Y)$ is a proper subset of $\Pic^0(B)$. Since $A_X$ and hence $B$ is simple, so are $\Pic^0(A_X)$ and $\Pic^0(B)$. Thus $V^0(a_{Y*}\omega_Y)$ consists of finitely many points.  By Theorem \ref{thm: decomp},
		\begin{equation}\label{eq: CJ Y}
		a_{Y*}\omega_Y = \bigoplus_i Q_i,
		\end{equation}
		where $Q_i\in \Pic^0(B)$ are torsion line bundles on $B$.
		
		On the other hand, as a consequence of \eqref{eq: V0}, we have $p_g(Y)=p_g(X)>0$. It follows that some $Q_i$ in the decomposition \eqref{eq: CJ Y} must be $\mathcal O_B$. By the Leray spectral sequence, $q(Y)-q(B)$ is the number of trivial line bundles in \eqref{eq: CJ Y}, and hence
		\[
		q(Y) =q(B) + (q(Y) - q(B)) >2
		\]
		which contradicts the assumption that $q(Y)=q(X)=2$.
	\end{proof}
	
	By Lemma~\ref{lem: B=A}, the commutative diagram \eqref{eq: quotient} simplifies to
	\begin{equation}\label{diag: ST}
	\begin{tikzcd}
	X \arrow[r, "f"] \arrow[d,"\pi"'] \arrow[drr, bend left =90, "a_X"]& S \arrow[d, "\pi_S"'] \arrow[dr, "a_S"] & \\
	Y \arrow[r, "h"]  \arrow[rr, bend right =45, "a_Y"']& T \arrow[r, "a_T"] & A_X
	\end{tikzcd}
	\end{equation}
	Note that every variety in \eqref{diag: ST} have $A_X$ as its Albanese variety.
	
	\begin{lem}\label{lem: M}
		Let $\pi_*\omega_X=\omega_Y\oplus \cM$ be the splitting induced by the trace map. Then the following holds.
		\begin{enumerate}
			\item[(i)]$H^i(Y, \cM)=0$ and $H^i(A_X, R^ja_{Y*}\cM)=0$ for all $i,j \geq 0$.
			\item[(ii)] $R^j a_{Y*}\cM =\bigoplus_k Q_{k}^j$, where $Q_{k}^j\in \Pic^0(A_X)$ are nontrivial torsion line bundles on $A_X$ for $j\geq 0$.
		\end{enumerate}
	\end{lem}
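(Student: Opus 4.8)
The plan is to exploit that $G=\Aut_\cO(X)$ acts trivially not only on $H^i(X,\mathcal O_X)$ but also, by Serre duality, on $H^i(X,\omega_X)$ for every $i$, and to read off $\cM$ as the non-invariant part of $\pi_*\omega_X$. Since $\pi\colon X\to Y$ is finite, $R^k\pi_*\omega_X=0$ for $k>0$, and hence $H^i(X,\omega_X)=H^i(Y,\pi_*\omega_X)=H^i(Y,\omega_Y)\oplus H^i(Y,\cM)$. Because we work in characteristic $0$, taking $G$-invariants is exact and commutes with cohomology, so $H^i(Y,\omega_Y)=H^i(Y,(\pi_*\omega_X)^G)=H^i(X,\omega_X)^G$ is precisely the $G$-invariant part, while $H^i(Y,\cM)$ is its complement. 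As $G$ acts trivially on $H^i(X,\omega_X)$, the whole space is invariant and the complement vanishes: $H^i(Y,\cM)=0$ for all $i$. This is the first half of (i).

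For the second half I would pass to the relative picture. Since $G\subset\Aut_A(X)$ by Lemma~\ref{lem: B=A}, the Albanese map factors as $a_X=a_Y\circ\pi$, and finiteness of $\pi$ gives $R^ja_{X*}\omega_X=R^ja_{Y*}(\pi_*\omega_X)=R^ja_{Y*}\omega_Y\oplus R^ja_{Y*}\cM$, where (again by exactness of $G$-invariants, which commute with $R^ja_{Y*}$) $R^ja_{Y*}\cM$ is the non-invariant summand. Koll\'ar's splitting $Ra_{X*}\omega_X\cong\bigoplus_j R^ja_{X*}\omega_X[-j]$ is canonical, hence $G$-equivariant, and restricting it to the non-invariant part yields $Ra_{Y*}\cM\cong\bigoplus_j R^ja_{Y*}\cM[-j]$. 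Consequently the Leray spectral sequence degenerates and $H^n(Y,\cM)=\bigoplus_{i+j=n}H^i(A_X,R^ja_{Y*}\cM)$. Combined with $H^n(Y,\cM)=0$ from the previous paragraph, this forces $H^i(A_X,R^ja_{Y*}\cM)=0$ for all $i,j$, completing (i).

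To prove (ii) I would feed (i) into the Chen--Jiang machinery. The sheaf $R^ja_{Y*}\cM$ is a direct summand of $R^ja_{X*}\omega_X$ with $X$ smooth, so by Theorem~\ref{thm: decomp} and Remark~\ref{rmk:torsion} it carries a Chen--Jiang decomposition $R^ja_{Y*}\cM\cong\bigoplus_I p_I^*\cF_I\otimes Q_I$ with $p_I\colon A_X\to B_I$ a quotient of abelian varieties, $\cF_I$ $M$-regular on $B_I$, and $Q_I\in\Pic^0(A_X)$ torsion. Here the simplicity of $A_X$ is decisive: each quotient $B_I$ is either a point or all of $A_X$. If some $B_I=A_X$, then $\cF_I$ is $M$-regular on $A_X$, so $V^0(\cF_I)=\Pic^0(A_X)$ and in particular $H^0(A_X,\cF_I\otimes Q_I)\neq0$, which would contribute a nonzero summand to $H^0(A_X,R^ja_{Y*}\cM)$, contradicting (i). Hence every $B_I$ is a point and each summand is $Q_I^{\oplus r_I}$ with $Q_I$ a torsion line bundle. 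Finally, a trivial $Q_I=\mathcal O_{A_X}$ would contribute $H^0(A_X,\mathcal O_{A_X})\neq0$, again contradicting the vanishing in (i); so all the $Q_I$ are nontrivial, and relabelling gives $R^ja_{Y*}\cM=\bigoplus_k Q_k^j$ with $Q_k^j$ nontrivial torsion.

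The main obstacle I anticipate is the bookkeeping needed to make the invariant-versus-non-invariant decomposition genuinely $G$-equivariant and compatible with Koll\'ar's splitting of $Ra_{X*}\omega_X$ --- in particular checking that $(\pi_*\omega_X)^G=\omega_Y$ and that forming $G$-invariants commutes with $R^ja_{Y*}$ and with the degenerate Leray spectral sequence. Once this is in place, both the vanishing (i) and its consequence (ii), via the Chen--Jiang decomposition and the simplicity of $A_X$, are essentially formal.
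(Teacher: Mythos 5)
Your proof is correct, and its overall skeleton (trivial $G$-action on $H^*(X,\omega_X)$ gives $H^*(Y,\cM)=0$; Leray; Chen--Jiang plus simplicity of $A_X$) is the same as the paper's, but you obtain the vanishing of $H^i(A_X,R^ja_{Y*}\cM)$ in a different order. The paper first applies the Chen--Jiang decomposition to $a_{Y*}\cM$ using only $H^0(A_X,a_{Y*}\cM)=H^0(Y,\cM)=0$ and the simplicity of $A_X$, concludes that $a_{Y*}\cM$ is a sum of nontrivial torsion line bundles and hence has vanishing cohomology in \emph{all} degrees, and only then runs the Leray spectral sequence: since $R^ja_{Y*}\cM\subset R^ja_{X*}\omega_X=0$ for $j\geq 2$, there are only two possibly nonzero rows, the bottom row contributes nothing, and $H^{i+1}(Y,\cM)=0$ forces $H^i(A_X,R^1a_{Y*}\cM)=0$ with no degeneration statement required. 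You instead invoke Koll\'ar's splitting of $Ra_{X*}\omega_X$ to degenerate the Leray spectral sequence for the summand $\cM$ and extract all the vanishings at once, and then feed them into Chen--Jiang for each $R^j$ separately; this buys a more uniform treatment of all $j$ at the cost of a derived-category input. The one point you should phrase more carefully is the assertion that Koll\'ar's splitting is ``canonical, hence $G$-equivariant'': the splitting maps in the derived category are not canonical, so restricting the isomorphism to a direct summand is not literally justified. What you actually need is only that the Leray spectral sequence of $a_Y$ with coefficients in $\cM$ degenerates at $E_2$, and this does follow because the Leray spectral sequence is functorial in the coefficient sheaf: the spectral sequence for $\pi_*\omega_X=\omega_Y\oplus\cM$ coincides with that of $a_X$ and $\omega_X$ (as $\pi$ is finite), hence degenerates by Koll\'ar, and its differentials restrict to the differentials of each summand, which therefore also vanish. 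With that substitution your argument for (i) is complete, and your part (ii) is the same Chen--Jiang-plus-simplicity argument as in the paper.
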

	\begin{proof}
		Since $G$ acts trivially on $H^i(X, \omega_X)$, we have $H^i(X, \omega_X)\cong H^i(Y, \omega_Y)$, hence $H^i(Y, \cM)=0$ for all $i\geq 0$. Pushing the decomposition $\pi_*\omega_X=\omega_Y\oplus \cM$ forward to $A_X$ by $a_{Y*}$, we obtain
		\begin{equation}\label{eq: M}
		a_{X*}\omega_X=a_{Y*}\omega_Y\oplus a_{Y*}\cM.
		\end{equation}
		
		Note that $H^0(A, a_{Y*}\cM)=H^0(Y, \cM)=0$. By Theorem \ref{thm: decomp} and Remark \ref{rmk:torsion}, taking the simplicity of $A_X$ into account, we infer that
		\[
		a_{Y*}\cM = \bigoplus_{k} Q_k
		\]
		where $Q_k\in \Pic^0(A)$ are nontrivial torsion line bundles on $A_X$.  This implies that (\cite[Corollary~3.2]{Hac04})
		\[
		H^i(A_X, a_{Y*}\cM)= \bigoplus_{k} H^i(A_X, Q_k) = 0\, \text{ for all } i\geq 0.
		\]
		Now it follows from the Leray spectral sequences for $H^i(Y,\cM)$ that 
		\[
		H^i(A_X, R^1a_{Y*}\cM)=0
		\]
		for all $i\geq 0$. Using the Chen--Jiang decomposition of $R^1a_{Y*}\cM$ and the simplicity of $A_X$, we obtain
		\[
		R^1a_{Y*}\cM =\bigoplus_k Q_{k}^1
		\]
		where the $Q_{k}^1$ are all nontrivial torsion line bundles on $A_X$.
		
		Finally, we remark that  $R^ja_{Y*}\cM\subset R^ja_{Y*}\omega_Y=0$ for $j\geq 2$ by \cite[Theorem~2.1]{Ko86a}.
	\end{proof}
	
	\begin{lem}\label{lem: ST}
		The homomorphism $\psi_S\colon G\rightarrow \Aut(S)$ is trivial, or equivalently, the quotient morphism $\pi_S\colon S\rightarrow T=S/\psi_S(G)$ in \eqref{diag: ST} is an isomorphism.
	\end{lem}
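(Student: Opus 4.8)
The plan is to detect the degree of the finite quotient map $\pi_S\colon S\to T$ in \eqref{diag: ST} cohomologically, by comparing the \emph{top} relative canonical sheaves of the curve fibrations $f\colon X\to S$ and $h\colon Y\to T$ after pushing everything down to the common Albanese variety $A_X$. Since $X$ is of general type with $\dim a_X(X)=2$, the Stein factorization $f$ has one-dimensional general fibers, and $a_S\colon S\to A_X$ is the Albanese morphism of $S$. First I would record, via relative Grothendieck duality and $\omega_{A_X}\cong\mo_{A_X}$, the identification
\[
R^1a_{X*}\omega_X\cong (a_{X*}\mo_X)^\vee=(a_{S*}\mo_S)^\vee\cong a_{S*}\omega_S,
\]
where $a_{X*}\mo_X=a_{S*}\mo_S$ comes from the Stein factorization. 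In particular this sheaf has generic rank $\deg a_S$, and the same computation on the $Y$-side gives $R^1a_{Y*}\omega_Y\cong a_{T*}\omega_T$, of generic rank $\deg a_T$.

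Next, since $\pi$ is finite and $a_X=a_Y\circ\pi$, applying $R^1a_{Y*}$ to the trace splitting $\pi_*\omega_X=\omega_Y\oplus\cM$ from Lemma~\ref{lem: M} yields
\[
a_{S*}\omega_S\cong R^1a_{X*}\omega_X\cong R^1a_{Y*}\omega_Y\oplus R^1a_{Y*}\cM .
\]
By Lemma~\ref{lem: M}(ii), $R^1a_{Y*}\cM=\bigoplus_k Q_k^1$ with all $Q_k^1\in\Pic^0(A_X)$ \emph{nontrivial} torsion line bundles. Comparing generic ranks, the number of summands $Q_k^1$ equals $\deg a_S-\deg a_T=(\deg\pi_S-1)\deg a_T$. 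Hence it suffices to prove that $a_{S*}\omega_S$ admits \emph{no} nontrivial torsion line bundle as a direct summand: this forces $R^1a_{Y*}\cM=0$, so $\deg\pi_S=1$, i.e. $\pi_S$ is an isomorphism and $\psi_S(G)$ is trivial.

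The crux is therefore this no-torsion-summand statement for $a_{S*}\omega_S\cong R^1a_{X*}\omega_X$. As $X$ is smooth, Theorem~\ref{thm: decomp} (together with Remark~\ref{rmk:torsion}) applies to $R^1a_{X*}\omega_X$, so it has a Chen--Jiang decomposition; since $A_X$ is simple, every quotient $p_I\colon A_X\to B_I$ occurring is either the identity or the projection to a point, whence each summand is either M-regular or a torsion line bundle. A direct summand of an M-regular sheaf is M-regular while a nontrivial torsion line bundle is not, so it is enough to show $a_{S*}\omega_S$ is M-regular when $\deg a_S\geq 2$ (the case $\deg a_S=1$ being immediate: then $a_S$ is an isomorphism, $a_{S*}\omega_S\cong\mo_{A_X}$ has no nontrivial torsion summand, and $\psi_S(G)\cong\psi_A(G)$ is already trivial by Lemma~\ref{lem: B=A}). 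When $\deg a_S\geq 2$ the cover $S\to A_X$ is ramified, and simplicity of $A_X$ forces $\kappa(S)=2$ (any $\kappa(S)\leq 1$ would exhibit a positive-dimensional abelian subvariety of $A_X$). Thus $S$ is of general type and of maximal Albanese dimension with simple Albanese variety $A_X$, carrying no irregular fibration through a proper quotient of $A_X$; by the M-regularity criterion underlying the Chen--Jiang decomposition, $a_{S*}\omega_S$ is M-regular.

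I expect the main obstacle to be exactly this last point. The decisive inputs are that $a_S$ is genuinely the Albanese morphism of $S$ (so one cannot split off torsion bundles arising from an étale cover, as happens for an isogeny onto $A_X$) together with the simplicity of $A_X$ (which removes all intermediate fibrations). Some additional care is needed to justify the relative-duality identifications over the possibly singular normal surfaces $S$ and $T$, and to handle the non-generic, possibly two-dimensional fibers of $a_X$ when passing to generic ranks; but these are technical rather than conceptual difficulties.
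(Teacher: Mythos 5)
Your reduction is attractive and genuinely different from the paper's route: you want to identify $R^1a_{X*}\omega_X\cong a_{S*}\omega_S$ and $R^1a_{Y*}\omega_Y\cong a_{T*}\omega_T$ (which is fine via Koll\'ar's $R^1f_*\omega_X=\omega_S$ and finiteness of $a_S$, rather than the slightly misstated duality $(a_{X*}\mo_X)^\vee$, and modulo the care you flag about the singular models $Y,T$), so that Lemma~\ref{lem: M}(ii) exhibits the difference $a_{S*}\omega_S\ominus a_{T*}\omega_T$ as a sum of $(\deg\pi_S-1)\deg a_T$ \emph{nontrivial} torsion line bundles; it then suffices to rule such summands out. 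The paper instead disposes of $\kappa(S)\le 1$ directly, and for $\kappa(S)=2$ proves $\chi(S,\omega_S)=\chi(T,\omega_T)$ (using the same splitting and the vanishing $H^i(A_X,a_{Y*}\cM)=0$) and invokes the classification of eventual paracanonical maps of surfaces \cite[Theorem~1.6]{J16} to get birationality of $\pi_S$.

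However, your justification of the crux step is false: $a_{S*}\omega_S$ is \emph{never} M-regular for $a_S$ generically finite and surjective. Indeed $H^2(A_X,a_{S*}\omega_S)=H^2(S,\omega_S)\cong H^0(S,\mo_S)^\vee\neq 0$, so the origin lies in $V^2(a_{S*}\omega_S)$ and $\codim V^2\le 2$, violating the definition; equivalently, $\mo_{A_X}$ always occurs as a direct summand in the Chen--Jiang decomposition of $a_{S*}\omega_S$. What you actually need is the weaker assertion that no \emph{nontrivial} torsion line bundle is a direct summand of $a_{S*}\omega_S$. This is true, but it does not follow from anything you wrote and requires its own argument, for instance: if $Q$ of order $m>1$ were a summand, pull back along the degree-$m$ \'etale cover $\mu\colon A'\to A_X$ trivializing $Q$; since $a_S$ is the Albanese morphism of $S$, the pullback $a_S^*$ is injective on $\Pic^0(A_X)$, so $S'=S\times_{A_X}A'$ is connected and $h^2(S',\omega_{S'})=h^0(S',\mo_{S'})=1$, while $a_{S'*}\omega_{S'}=\mu^*a_{S*}\omega_S\supseteq \mo_{A'}^{\oplus 2}$ forces $h^2(S',\omega_{S'})\ge 2$, a contradiction. (This is the same mechanism as the paper's Lemma~\ref{lem: B=A}.) With that repair your approach goes through, but as written the decisive claim is incorrect and the lemma is not proved.
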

	\begin{proof}
		
		If $\kappa(S)=0$, then $S$ is an abelian surface and $S\rightarrow A_X$ is an isomorphism by the universal properties of the Stein factorization and the Albanese morphism.
		As a consequence, $S=T$.
		
		If $\kappa(S)=1$ then there is a pencil of (possibly singular) elliptic curves on $S$, whose image in $A_X$ is necessarily again a pencil of elliptic curves, contradicting the simplicity of $A_X$.
		
		Suppose now $\kappa(S)=2.$ Since $\pi_S$ is finite, in order to show that $\pi_S$ is an isomorphism, it suffices to show that $\pi_S$ is birational. For this purpose, we may pass to smooth higher birational models $X', Y', S', T'$ of $X, Y, S$ and $T$ respectively, and by abuse of notation, assume that $S$ and $T$ are themselves smooth. We will show that $\chi(S, \omega_{S}) =\chi(T, \omega_{T})$. Grant this for the moment. Then, since the eventual paracanonical map of $S$ factors through $S\rightarrow T$ by \cite[Proposition~1.5]{J16}, we can apply \cite[Theorem~1.6]{J16} to conclude that $\pi_S\colon S\rightarrow T$ is birational.
		
		It remains to show that $\chi(S, \omega_{S}) = \chi(T, \omega_{T})$. By \cite[Proposition~7.6]{Ko86a}, we have $R^1f_*\omega_X=\omega_{S}$ and $R^1h_*\omega_Y=\omega_{T}$. Therefore,
		\begin{equation}\label{eq: XSYT}
		\chi(X, \omega_X)=\chi(S, f_*\omega_X)-\chi(S, \omega_{S}) \text{ and } \chi(Y, \omega_Y)=\chi(T, h_*\omega_Y)-\chi(T, \omega_{T}).
		\end{equation}
		As in Lemma~\ref{lem: M}, we have a splitting $\pi_*\omega_X=\omega_Y\oplus\cM$ such that
		\begin{equation}\label{eq: vanish M}
		H^i(A_X, a_{Y*}\cM) =0 \text{ for any } i \geq 0
		\end{equation}
		Since $a_{S*}f_*\omega_X = a_{X_*}\omega_X=a_{Y*}\omega_Y\oplus a_{Y*}\cM=a_{T*}h_*\omega_Y\oplus a_{Y*}\cM$, we obtain
		\begin{multline}\label{eq: ST}
		\chi(S, f_*\omega_X)=\chi(A_X, a_{S*}f_*\omega_X)=\chi(A_X, a_{T*}h_*\omega_Y) + \chi(A_X, a_{Y*}\cM) \\
		=\chi(A_X, a_{T*}h_*\omega_Y) = \chi(T, h_*\omega_Y)
		\end{multline}
		where the first and the last equalities are due to the vanishing of $R^1a_{S*}(f_*\omega_X)$ and $R^1a_{T*}(h_*\omega_Y)$ by \cite[Theorem~3.4 (iii)]{Ko86b} and the third equality is by \eqref{eq: vanish M}. Moreover, since $h^i(X, \omega_X)=h^i(Y, \omega_Y)$ for each $i\geq 0$, we have $\chi(X, \omega_X)=\chi(Y, \omega_Y)$. Combining this with \eqref{eq: XSYT} and \eqref{eq: ST}, we obtain $\chi(T, \omega_{T}) = \chi(S, \omega_{S})$.
	\end{proof}
	
	By Lemma~\ref{lem: ST}, the commutative diagram~\eqref{diag: ST} further simplifies to
	\begin{equation}\label{diag: S}
	\begin{tikzcd}
	X \arrow[dr, "f"] \arrow[d,"\pi"'] \arrow[drr, bend left =45, "a_X"]&  & \\
	Y \arrow[r, "h"]  \arrow[rr, bend right =45, "a_Y"']& S \arrow[r, "a_S"] & A_X
	\end{tikzcd}
	\end{equation}

	\begin{proof}[Proof of Proposition~\ref{prop: Ad2 simple}]
		In the diagram~\eqref{diag: S}, the general fiber $C$ of $f$ is preserved by the action of $G$. Let $D=\pi(C)$ be the fiber of $h$ over the point $f(C)\in S$. Then $D\cong C/G$ and $\pi|_C \colon C \rightarrow D$ is the quotient map under the action of $G$. By the Riemann--Hurwitz formula, we have
		\begin{equation}\label{eq: RH}
		2g(C)-2 = |G|(2g(D)-2)+\delta
		\end{equation}
		where $\delta$ is the degree of the ramification divisor of $\pi|_C \colon C \rightarrow D$. Also, we have
		\[
		\rk(a_{Y*}\cM) = \rk(a_{S*}h_*\cM) = (\deg a_S)\cdot \rk(h_*\cM) =  (\deg a_S)(g(C)-g(D)),
		\]
		where $\rk(\cdot)$ denotes the rank of a sheaf. One has $p_g(Y)=p_g(X)>0$ by \eqref{eq: V0} and hence $g(D)\geq 1$.
		
		By Lemma~\ref{lem: M}, there is an abelian \'etale cover $\mu: \wA_X\rightarrow A_X$ such that $\mu^*(R^1a_{Y*}\cM)$
		is a direct sum of trivial line bundles.  Consider the base change of $X\rightarrow Y\rightarrow S\rightarrow A_X$ by $\mu\colon\tilde A_X\rightarrow A_X$,
		\[
		\begin{tikzcd}
		\tilde X\arrow[r, "\tilde \pi"] \arrow[d, "\mu_X"] \arrow[rr, bend left = 45, "\tilde f"]& \tilde Y\arrow[r, "\tilde h"]\arrow[d, "\mu_Y"] &\tilde S \arrow[r, "\tilde a_{S}"] \arrow[d, "\mu_S"]& \tilde A_X\arrow[d, "\mu"]\\
		X\arrow[r, "\pi"] \arrow[rr, bend right = 45, "f"]&  Y\arrow[r, "h"]&S \arrow[r, "a_S"] & A_X
		\end{tikzcd}
		\]
		where $\tilde X=X\times_{A_X} \tilde A_X,\, \tilde Y\times_{A_X} \tilde A_X$ and $\tilde S=S\times_{A_X}\tilde A_X$, and the squares are those of fiber products.
		
		By \cite[Corollary 3.2]{Ko86b}, we have
		\begin{eqnarray*}
			H^2(\wX, \omega_{\wX})&\cong & H^1({\tilde S}, R^1{\tilde f_*}\omega_{\wX} )\oplus H^2({\tilde S}, {\tilde f_*}\omega_{\wX})\\
			&\cong& H^1({\tilde S}, \omega_{\wS})\oplus H^2(\wA_X, \mu^*(a_{X*}\omega_X)),
		\end{eqnarray*}
		so $q_{\tilde f} := q(\tilde X) - q(\wS) = h^2(\wA_X, \mu^*(a_{X*}\omega_X))$.
		Since $a_{Y*}\cM$ is a direct summand of $a_{X*}\omega_X$, $\mu^*(a_{Y*}\cM)$ is a direct sum of $\mu^*(a_{X*}\omega_X)$ and we conclude that
		\[
		h^2(\wA_X, \mu^*(a_{X*}\omega_X))\geq h^2(\wA_X, \mu^*(a_{Y*}\cM))=\mathrm{rk}( a_{Y*}\cM)=(\deg a_{S})(g(C)-g(D)),
		\]
		where the second equality is because $\mu^*(a_{Y*}\cM)$ is a direct sum of copies of $\mo_{\tilde A_X}$ by construction.
		
		Let $B$ be a general hyperplane section of $\wS$ and $\tilde X_B=\tilde f^{-1}(B)$. Let $\tilde f_B\colon \tilde X_B \rightarrow B$ be the restriction of $\tilde f$ over $B$.
		\begin{claim}\label{clm: equal H1}
			$h^0(B, R^1\tilde f_{B*}\cO_{\tilde X_B})  = h^0(\wS, R^1{\tilde f_*} \cO_{\wX}) $
		\end{claim}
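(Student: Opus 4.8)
The plan is to realize both quantities as invariants of the single sheaf $\cG:=R^1\tilde f_*\cO_{\wX}$ on the surface $\wS$, namely $h^0(B,\cG|_B)$ and $h^0(\wS,\cG)$, and to show these agree. First I would record that for a general member $B$ of a very ample system on $\wS$ cohomology and base change applies along the Cartesian square
\[
\begin{tikzcd}
\tilde X_B \arrow[r] \arrow[d, "\tilde f_B"'] & \wX \arrow[d, "\tilde f"] \\
B \arrow[r] & \wS
\end{tikzcd}
\]
Since $\tilde f$ has relative dimension one we have $R^q\tilde f_*\cO_{\wX}=0$ for $q\ge 2$, so taking $B$ transverse to the finitely many degenerate loci of $\tilde f$ and disjoint from the associated points of $\cG$ yields a natural isomorphism $R^1\tilde f_{B*}\cO_{\tilde X_B}\cong \cG|_B$. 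The claim thus becomes the equality $h^0(\wS,\cG)=h^0(B,\cG|_B)$.

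Next I would compute each side by the Leray spectral sequence. For a fibration the pullback $\tilde f^*$ is injective on $H^2(-,\cO)$ (test against a multisection, i.e. a complete intersection surface in $\wX$ mapping finitely to $\wS$), so the differential $d_2\colon H^0(\wS,\cG)\to H^2(\wS,\cO_{\wS})$ vanishes and the low-degree exact sequence gives $h^0(\wS,\cG)=q(\wX)-q(\wS)=q_{\tilde f}$. Over the curve $B$ the term $H^2(B,\cO_B)$ vanishes, so the same argument gives $h^0(B,\cG|_B)=q(\tilde X_B)-g(B)$. Hence the claim is equivalent to the numerical identity $q(\tilde X_B)-g(B)=q(\wX)-q(\wS)$.

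To prove this identity I would pass to the topology of the family (replacing $\wS$ by a smooth model if necessary). By Deligne's degeneration of the Leray spectral sequence, $h^0(\wS,\cG)$ is the $(0,1)$-part of the space of monodromy-invariant classes of the local system $R^1\tilde f_*\QQ$ over the Zariski-open locus $\wS^\circ$ where $\tilde f$ is smooth; likewise $h^0(B,\cG|_B)$ is the $(0,1)$-part of the invariants of the restricted local system over $B^\circ=B\cap\wS^\circ$, which is the pullback of $R^1\tilde f_*\QQ$. Since $B$ is a general hyperplane section, the second Lefschetz theorem for the quasi-projective surface $\wS^\circ$ gives a surjection $\pi_1(B^\circ)\tto\pi_1(\wS^\circ)$; as the local system on $B^\circ$ is pulled back and the fundamental group surjects, the two spaces of invariants are literally equal, and therefore so are their Hodge pieces. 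This yields the desired equality.

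The main obstacle I expect is making these two reductions precise. First, the Hodge-theoretic identification of $h^0$ of $R^1\tilde f_*\cO$ with the $(0,1)$-summand of the invariants is Deligne's theorem applied after a smooth compactification and relies on the variation being polarized; second, one must ensure the $\pi_1$-surjectivity survives the removal of the discriminant, i.e. a Lefschetz theorem for the open surface $\wS^\circ$, for which the generality of $B$ and its transversality to the discriminant are essential. As a purely algebraic safeguard for the injectivity half, one can instead use relative duality, $\cG\cong(\tilde f_*\omega_{\wX/\wS})^\vee$ on the flat locus, together with the nefness of the Hodge bundle $\tilde f_*\omega_{\wX/\wS}$ (Fujita--Kawamata--Kollár): a nonzero section of $\cG(-B)$ would produce a surjection from the nef bundle $\cG^\vee$ onto the anti-ample line bundle $\cO_{\wS}(-B)$, which is impossible since a quotient line bundle of a nef bundle has nonnegative degree, so $H^0(\wS,\cG(-B))=0$ and hence $H^0(\wS,\cG)\hookrightarrow H^0(B,\cG|_B)$.
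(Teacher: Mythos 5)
Your first reduction coincides with the paper's: pushing $\tilde f_*$ through the ideal-sheaf sequence of $B$ gives, for $B$ general, a short exact sequence
\[
0\rightarrow (R^1\tilde f_*\cO_{\wX})(-B)\rightarrow R^1\tilde f_*\cO_{\wX}\rightarrow R^1\tilde f_{B*}\cO_{\tilde X_B}\rightarrow 0,
\]
identifying $R^1\tilde f_{B*}\cO_{\tilde X_B}$ with $\cG|_B$ for $\cG:=R^1\tilde f_*\cO_{\wX}$. Where you genuinely diverge is in proving $h^0(\wS,\cG)=h^0(B,\cG|_B)$. The paper stays entirely coherent: it asserts $H^0(\wS,\cG(-B))=H^1(\wS,\cG(-B))=0$ for $B$ sufficiently ample and reads the equality off the long exact sequence. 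You instead compute both sides by the Leray spectral sequence (using $d_2=0$ and $H^2(B,\cO_B)=0$) as $q(\wX)-q(\wS)$ and $q(\tilde X_B)-g(B)$, identify each with the $(0,1)$-piece of the monodromy invariants of $R^1\tilde f_*\QQ$ via Deligne degeneration and the global invariant cycle theorem, and conclude by the surjection $\pi_1(B^\circ)\tto\pi_1(\wS^\circ)$. Your route is heavier, but it buys something real: the vanishing $H^1(\wS,\cG(-B))=0$ is \emph{not} a formal consequence of $B$ being ample when $\cG$ fails to be locally free (for the ideal sheaf of a point on a smooth surface this $H^1$ is nonzero for every ample $B$), whereas your argument never needs any vanishing for $\cG(-B)$ and only uses that $\wX$ is smooth projective; it also explains conceptually why the claim holds, namely that both sides compute $q_{\tilde f}$, which is exactly how the claim is used afterwards.

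Two points must be tightened before your sketch is a proof. First, $\wS$ is a priori only a normal surface finite over an abelian surface; the identities $h^0(\wS,\cG)=q(\wX)-q(\wS)$, the purity and Hodge-theoretic interpretation of $H^1(\wS,\QQ)$, and the injectivity of the edge map on $H^2(-,\cO)$ all need to be justified for this possibly singular base. ``Replacing $\wS$ by a smooth model'' is not innocuous, since it changes $\tilde f$, the sheaf $\cG$, and the hyperplane section $B$, so the claim would have to be transported back. Second, your closing ``algebraic safeguard'' via relative duality and nefness of the Hodge bundle only yields the injectivity $H^0(\wS,\cG)\hookrightarrow H^0(B,\cG|_B)$, which is the easy half; the substance of the claim is the surjectivity, so that paragraph cannot serve as a fallback if the Hodge-theoretic part were to fail.
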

		\begin{proof}[Proof of the claim.]
			Applying ${\tilde f_*}$ to the short exact sequence of sheaves on $\tilde X$:
			\[
			0\rightarrow \mo_{\tilde X}(-\tilde f^*B) \rightarrow \mo_{\tilde X} \rightarrow \mo_{\tilde X_B} \rightarrow 0
			\]
			we obtain an exact sequence of sheaves on $\tilde S$:
			\[
			f_* \mo_{\tilde X_B} \rightarrow (R^1 {\tilde f_*} \mo_{\tilde X})(-B) \rightarrow  R^1 {\tilde f_*} \mo_{\tilde X} \rightarrow R^1 {\tilde f}_{B*}\mo_{\tilde X_B} \rightarrow  (R^2 {\tilde f_*} \mo_{\tilde X})(-B)
			\]
			For a general hyperplane $B\subset \wS$, one deduces
			\[
			0\rightarrow (R^1 {\tilde f_*} \mo_{\tilde X})(-B) \rightarrow  R^1 {\tilde f_*} \mo_{\tilde X} \rightarrow R^1 {\tilde f}_{B*}\mo_{\tilde X_B} \rightarrow 0
			\]
			Since we have $H^0(\wS, (R^1 {\tilde f_*} \mo_{\tilde X})(-B))  =0$ and $H^1( \wS, (R^1 {\tilde f_*} \mo_{\tilde X})(-B)) = 0$ for a sufficiently ample hypersurface $B$ on $\tilde S$, the induced exact sequence of cohomology groups gives the desired isomorphism:
			\[
			H^0(\wS,R^1 {\tilde f_*} \mo_{\tilde X}) \cong H^0(B, R^1 \tilde f_{B*}\mo_{\tilde X_B}).
			\]
		\end{proof}
		By Claim~\ref{clm: equal H1}, we obtain
		\begin{equation}\label{eq: q_f}
		q_{\tilde f_B} = h^0(B, R^1\tilde f_{B*}\cO_{\tilde X_B})  = h^0(\wS, R^1{\tilde f_*} \cO_{\wX}) = q_{\tilde f}\geq (\deg a_S)(g(C)-g(D)).
		\end{equation}
		
		Suppose that $g(D)=1$. If $g(C)\geq 8$ then $q_{\tilde f_B}\geq g(C)-1\geq \frac{5g(C)+1}{6}$, and hence $\tilde f_B$ is birationally trivial by Xiao's result \cite{X87}. Since $B\subset \tilde S$ is a general hypersurface, $\pi_1(B)\rightarrow \pi_1(\tilde S)$ is surjective and the birational triviality of $\tilde f_B\colon \tilde X_B\rightarrow B$ implies that of $\tilde f\colon \tilde X\rightarrow \tilde S$. Therefore, $\tilde{X}$ is birational to ${\tilde S}\times C$ and $\tilde{Y}$ is birational to $\tilde S\times D$. By the fact that $$\chi(\wX, \omega_{\wX})=(\deg \mu) \chi(X, \omega_X)=(\deg\mu )\chi(Y, \omega_Y)=\chi(\wY, \omega_{\wY}),$$ we infer that $C=D$, which is absurd. Thus in this case $g(C)\leq 7$ and hence $|G|\leq |\mathrm{Aut}(C)|\leq 84\times (7-1)=504$, where the second inequality is Hurwitz's bound of the full automorphism group of a curve.
		
		If $g(D)\geq 2$, then by \eqref{eq: RH} and \eqref{eq: q_f}
		\[
		q_{\tilde f_B} \geq (\deg a_S)(g(C)-g(D)) = \left(\deg a_S\right)\left(\frac{\delta}{2}+(|G|-1)(g(D)-1)\right).
		\]
		where $\delta$ is as in \eqref{eq: RH}. By a simple calculation, if $g(C)\geq 8$ and $|G|\geq 13$, then $q_{\tilde f_B}>\frac{5g(C)+1}{6}$ and by Xiao's result \cite{X87}, $\tilde f_B$ and hence $\tilde f$ is birationally trivial, and we draw a contradiction by the same argument as in the last paragraph. Thus either $|G|\leq 12$ or $g(C)\leq 7$, in which case $|G|\leq \frac{g(C)-1}{g(D)-1} \leq 6$.
	\end{proof}
	
	\section{Threefolds of general type with Gorenstein minimal models}
	\begin{thm}\label{thm: pg}
		There is a constant $M$ such that the following holds. Let $X$ be a projective threefold of general type with canonical singularities such that its minimal models are Gorenstein. Then $|\Aut_\mo(X)|<M$.
	\end{thm}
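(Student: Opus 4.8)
The plan is to adapt the volume-comparison strategy of \cite{Z21}, replacing the Severi inequality used there (which requires maximal Albanese dimension) by the log Noether inequality for pairs established in the appendix. The skeleton is: bound $\vol(K_X)$ from above by Miyaoka--Yau in terms of $\chi(X,\omega_X)$, bound it from below through the quotient by $G:=\Aut_\mo(X)$ via the log Noether inequality in terms of $p_g$, and reconcile the two. First I would reduce to the case where $X$ is itself a Gorenstein minimal model. Running a $G$-equivariant minimal model program and using that $H^i(-,\mo)$ is a birational invariant among varieties with rational singularities, compatibly with the $G$-action (compare Lemma~\ref{lem: descend}(i)), the group $G$ descends faithfully to a group of $\mo$-cohomologically trivial automorphisms of a Gorenstein minimal model $X'$; the descent is injective because $X\dashrightarrow X'$ is birational, so $|\Aut_\mo(X)|\le|\Aut_\mo(X')|$ and I may assume $K_X$ is nef, big and Cartier.

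Next I would split off the highly irregular configurations already handled in the previous section. If $X$ is not of maximal Albanese dimension and $q(X)\ge 3$, or $\dim a_X(X)=2$ with $A_X$ simple, then $|\Aut_\mo(X)|$ is already bounded by Corollary~\ref{cor: Ad1}, Proposition~\ref{prop: Ad2} and Proposition~\ref{prop: Ad2 simple}. After removing these, I may assume that \emph{either} $X$ has maximal Albanese dimension \emph{or} $q(X)\le 2$. Now let $\pi\colon X\to Y:=X/G$ be the quotient and write $K_X=\pi^*(K_Y+\Delta)$, where $\Delta=\sum_i(1-\tfrac1{r_i})B_i$ is the branch divisor; then $(Y,\Delta)$ is a log canonical pair of general type whose coefficients lie in $\mc=\{1-\tfrac1r:r\ge 2\}$, a set for which $\mc\cup\{1\}$ attains its minimum $\tfrac12$, and
\[
\vol(K_X)=|G|\,\vol(K_Y+\Delta).
\]
Since $K_X$ is nef and big, the Miyaoka--Yau inequality gives $\vol(K_X)\le C_0\,\chi(X,\omega_X)$ for a universal $C_0$, with $\chi(X,\omega_X)>0$; and since $G$ acts trivially on $H^*(X,\omega_X)$ we have $p_g(Y)=p_g(X)$ and $\chi(Y,\omega_Y)=\chi(X,\omega_X)$. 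Feeding $(Y,\Delta)$ into the log Noether inequality of the appendix yields $\vol(K_Y+\Delta)\ge c_1\,p_g(Y,\Delta)-c_2$ with $c_1>0$ and $c_2$ depending only on $\mc$; as $p_g(Y,\Delta)=h^0(Y,\lfloor K_Y+\Delta\rfloor)\ge h^0(Y,\omega_Y)=p_g(X)$, this gives $\vol(K_Y+\Delta)\ge c_1\,p_g(X)-c_2$.

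Combining the two estimates,
\[
|G|\bigl(c_1\,p_g(X)-c_2\bigr)\le \vol(K_X)\le C_0\,\chi(X,\omega_X).
\]
The main obstacle is precisely that Miyaoka--Yau controls $\vol(K_X)$ by $\chi(X,\omega_X)$ whereas Noether controls it by $p_g(X)$, and these differ by the irregularity through the identity $\chi(X,\omega_X)=p_g(X)-h^2(X,\mo_X)+q(X)-1$. This is where the case split above is used. If $X$ has maximal Albanese dimension, then (passing to a resolution) $a_{X*}\omega_X$ is a GV-sheaf by Theorem~\ref{thm: decomp}, so generic vanishing gives $\chi(X,\omega_X)=h^0(X,\omega_X\otimes P)\le h^0(X,\omega_X)=p_g(X)$ for general $P\in\Pic^0(A_X)$, the inequality being semicontinuity. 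If instead $q(X)\le 2$, the displayed identity gives $\chi(X,\omega_X)\le p_g(X)+1$ directly. In either case $\chi(X,\omega_X)\le p_g(X)+1$, hence
\[
|G|\le \frac{C_0\,(p_g(X)+1)}{c_1\,p_g(X)-c_2}
\]
whenever $c_1\,p_g(X)>c_2$, which is bounded by a universal constant once $p_g(X)$ exceeds a fixed threshold.

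Finally, for the remaining $X$ with $p_g(X)$ below that threshold we have $\vol(K_X)\le C_0\,\chi(X,\omega_X)\le C_0\bigl(p_g(X)+1\bigr)$ bounded, so such $X$ form a birationally bounded family; the automorphism groups of a bounded family of canonical models are uniformly bounded, and $|\Aut_\mo(X)|$ is controlled accordingly. Taking the maximum of the constants produced in all the above cases gives the required $M$. The genuinely technical point on which everything rests is the log Noether inequality of the appendix, whose constants $c_1,c_2$ must be \emph{uniform} over all admissible boundaries; this is exactly what the minimality hypothesis on $\mc\cup\{1\}$ secures, and it is the step I expect to be the hardest.
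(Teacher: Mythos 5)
Your proposal is correct and follows essentially the same route as the paper: sandwich $\vol(K_X)$ between the Miyaoka--Yau upper bound in terms of $\chi(X,\omega_X)$ and the lower bound $|G|\,(a\,p_g(X)-b)$ coming from the quotient $X\to X/G$ and the appendix's log Noether inequality with coefficient set $\{1-\tfrac1r\}$, then treat small $p_g$ by boundedness. The only (harmless) divergences are that you absorb the maximal-Albanese-dimension case into the volume argument via generic vanishing ($\chi\le p_g$, as in \cite{Z21}) instead of delegating all of $q(X)\ge 3$ to Section~3 as the paper does, and you insert a dispensable equivariant reduction to a Gorenstein minimal model, whereas the paper simply uses birational invariance of $\vol$, $\chi$ and $p_g$.
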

	
	\begin{proof}
		Since $X$ has a  Gorenstein minimal model, the Miyaoka--Yau inequality gives
		\begin{multline}\label{eq: MY}
		\vol(K_X)\leq 72\chi(X, \omega_X) = 72(p_g(X) - h^1(X,\omega_X) + q(X) -1) \\
		\leq 72(p_g(X) +q(X) -1 )
		\end{multline}
		In particular, $\chi(X, \omega_X)>0$.
		
		By Lemma~\ref{lem: chi 0}, $\Aut_\mo(X) = \Aut_\mo(X)\cap \Aut_A(X)$. If $q(X)\geq 3$ then $|\Aut_\mo(X)|$ is uniformly bounded by Theorem~\ref{thm: mAd}.
		
		In the following we can assume that $q(X)\leq 2$. Let $Y=X/\Aut_\mo(X)$ be the quotient variety and $\pi\colon X\rightarrow Y$ the quotient morphism. Then there is an effective divisor $B$, which is supported on the branched divisor of $\pi$ and with coefficients from the set $\mc_2:=\{1-\frac{1}{n}\mid n\in \ZZ_{>0}\}$, such that $K_X= \pi^*(K_Y+\Delta)$. Obviously, $K_Y+\Delta$ is big and, by \cite[Proposition~5.20]{KM98}, the pair $(Y, \Delta)$ is klt. Since $\Aut_\mo(X)$ acts trivially on $H^0(X, K_X)$, one has $p_g(X)  = p_g(Y, \Delta)$.
		
		By Theorem~\ref{thm: Noether}, there are some positive constants $a$ and $b$ such that
		\begin{equation}\label{eq: Noether}
		\vol(K_X)=|\Aut_\mo(X)|\vol(K_Y+\Delta) \geq |\Aut_\mo(X)|(ap_g(X) -b).
		\end{equation}
		Combining the inequalities \eqref{eq: MY} and \eqref{eq: Noether}, we obtain
		\[
		|\Aut_\mo(X)|(ap_g(X) -b) \leq  72(p_g(X) +1 ).
		\]
		Now one sees easily that, there is a sufficiently large integer $N$ such that $|\Aut_\mo(X)|\leq \lceil \frac{72}{a}+1\rceil$ holds if $p_g(X)>N$. On the other hand, if $p_g(X)\leq N$ then, by \eqref{eq: MY}, we have $\vol(K_X)\leq 72(N+1)$, which implies that this class of threefolds of general type are bounded. It follows that there is a constant $M'>0$ such that $|\Aut(X)|\leq M'$, provided $p_g(X)\leq N$.
		
		Then $M=\max\{\lceil\frac{72}{a}+1\rceil, M'\}$ is the bound we wanted.
	\end{proof}
	
	\begin{cor}\label{cor: Aut Q}
		Let $X$ be a projective threefold of general type with canonical singularities such that its minimal models are Gorenstein. Then $|\Aut_\QQ(X)|\leq M$, where $M$ is as in Theorem~\ref{thm: pg}.
	\end{cor}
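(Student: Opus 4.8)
The plan is to deduce this corollary directly from Theorem~\ref{thm: pg} by showing the inclusion $\Aut_\QQ(X)\subset\Aut_\mo(X)$, after which the desired bound $|\Aut_\QQ(X)|\leq|\Aut_\mo(X)|< M$ is immediate. The substantive content has already been carried out in the proof of Theorem~\ref{thm: pg} (via the Miyaoka--Yau inequality \eqref{eq: MY} and the Noether type inequality \eqref{eq: Noether}); here only the comparison between the two automorphism subgroups remains.

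First I would observe that since $X$ has canonical singularities, the pair $(X,0)$ is log canonical. Applying Lemma~\ref{lem: Hodge}(i) with $\Delta=0$, every $\sigma\in\Aut_\QQ(X)$ acts trivially on $H^i(X,\mo_X)$ for all $i\geq 0$. By the very definition of $\Aut_\mo(X)$ this means $\sigma\in\Aut_\mo(X)$, so that $\Aut_\QQ(X)\subset\Aut_\mo(X)$. (One could equivalently phrase the triviality on the groups $H^i(X,\omega_X)$, using that canonical singularities are rational and hence Cohen--Macaulay, so that Serre duality identifies the two actions as noted in the preliminaries.)

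With the inclusion in hand, Theorem~\ref{thm: pg} gives $|\Aut_\QQ(X)|\leq|\Aut_\mo(X)|< M$ with the same constant $M$, which is exactly the assertion. This is the same mechanism used to pass from Theorem~\ref{thm: mAd} to Corollary~\ref{cor: mAd} and from Proposition~\ref{prop: Ad2} to Corollary~\ref{cor: Ad2}, the only differences being that here the hypothesis on minimal models replaces the Albanese-dimension hypotheses, and that no appeal to Proposition~\ref{prop: fix A} (hence no passage to $\Aut_A(X)$) is needed, since Theorem~\ref{thm: pg} already controls all of $\Aut_\mo(X)$. There is no genuine obstacle in this step; the only point requiring a word of care is the verification that canonical singularities make $(X,0)$ log canonical so that Lemma~\ref{lem: Hodge}(i) applies, which is standard.
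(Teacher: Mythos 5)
Your proposal is correct and follows exactly the paper's argument: the paper's proof is the one-line observation that Lemma~\ref{lem: Hodge} (applied with $\Delta=0$, using that canonical singularities are log canonical) gives $\Aut_\QQ(X)\subset\Aut_\mo(X)$, after which Theorem~\ref{thm: pg} yields the bound. Your additional remarks on Serre duality and the analogy with Corollaries~\ref{cor: mAd} and \ref{cor: Ad2} are accurate but not needed.
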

	\begin{proof}
		By Lemma~\ref{lem: Hodge}, $\Aut_\QQ(X)$ is contained in the group $\Aut_\mo(X)$.
	\end{proof}
	
	Recall that a smooth projective variety is isogenous to a product of curves if it admits the product of smooth projective curves as an \'etale cover.
	\begin{cor}\label{cor: isog}
		Let $X$ be a projective threefold of general type isogenous to a product of curves. Then $|\Aut_\QQ(X)|\leq M$, where $M$ is as in Theorem~\ref{thm: pg}.
	\end{cor}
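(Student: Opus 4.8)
The plan is to reduce the statement to Corollary~\ref{cor: Aut Q} by verifying that $X$ carries a Gorenstein minimal model. By the definition recalled just before the statement, being isogenous to a product of curves means that there is a product $T=C_1\times C_2\times C_3$ of smooth projective curves together with a finite \'etale cover $\pi\colon T\to X$; equivalently, $X=T/G$ for a finite group $G$ acting freely on $T$. First I would observe that, since $\pi$ is \'etale, one has $K_T=\pi^*K_X$, so $T$ is again of general type. A product of smooth projective curves is of general type precisely when each of its factors has genus at least $2$; hence $g(C_i)\ge 2$ for $i=1,2,3$.

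Next I would note that $K_T=\sum_i \mathrm{pr}_i^*K_{C_i}$ is ample, being a sum of pullbacks of ample divisors under the projections from the product. Since $\pi$ is finite and surjective and $\pi^*K_X=K_T$ is ample, ampleness descends along $\pi$, so $K_X$ is ample as well. In particular $X$ is a smooth projective threefold with $K_X$ ample; therefore $X$ is already its own minimal model, and being smooth this minimal model is Gorenstein.

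Finally, $X$ is a projective threefold of general type with (trivially) canonical singularities whose minimal models are Gorenstein, so Corollary~\ref{cor: Aut Q} applies verbatim and yields $|\Aut_\QQ(X)|\le M$ with the same constant $M$ as in Theorem~\ref{thm: pg}. The only point requiring any care --- and the closest thing to an obstacle here --- is the descent of ampleness of $K_X$ along the finite surjective map $\pi$, which guarantees that $X$ is minimal and hence serves as its own Gorenstein minimal model; this is a standard fact, after which the conclusion is immediate from Corollary~\ref{cor: Aut Q}.
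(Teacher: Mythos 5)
Your proposal is correct and follows essentially the same route as the paper: the paper's proof simply observes that $X$ is smooth with ample $K_X$ (hence is its own Gorenstein minimal model) and invokes Corollary~\ref{cor: Aut Q}. You have merely spelled out the standard facts behind that observation (étale pullback of $K_X$, ampleness of $K_T$ on the product, and descent of ampleness along a finite surjective morphism), all of which are fine.
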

	\begin{proof}
		Since $X$ is isogenous to a product of curves, it is smooth with ample $K_X$. Now apply Corollary~\ref{cor: Aut Q}.
	\end{proof}

	\appendix
	
	\section{Noether type inequalities for log canonical pairs of general type}
	
	Let $\mc\subset(0,1]$ be a subset, possibly empty.
	Define $\fP_n (\mc)$ to be the set of $n$-dimensional projective log canonical pairs $(X, \Delta)$ such that $\mc_\Delta\subset\mc$ and $K_X+\Delta$ is big, where $\mc_\Delta$ denotes the set of nonzero coefficients appearing in $\Delta$. Furthermore, set
	\[
	\fP_n^+ (\mc):=\{(X, \Delta) \in\fP_n (\mc) \mid p_g(X, \Delta)>0\},
	\]
	where $p_g(X, \Delta):=h^0(X, K_X+\lfloor \Delta \rfloor)$, and
	\[
	\fV_n^+(\mc):=\{\vol(K_X+\Delta) \mid (X, \Delta)\in \fP_n^+(\mc)\} \text{ and }v_n^+(\mc): =\inf\fV_n^+(\mc).
	\]
	
	\begin{rmk}
		If $\mc$ satisfies the descending chain condition, then  the set
		\[
		\fV_n(\mc):=\{\vol(K_X+\Delta) \mid (X, \Delta)\in \fP_n(\mc)\}
		\]
		also satisfies the descending chain condition (\cite[Theorem~1.3]{HMX14}).  In particular, any nonempty subset of $\fV_n(\mc)$ attains its minimum.
	\end{rmk}
	{
		Now we state the main result of this appendix.
		\begin{thm}\label{thm: Noether}
			Let $n$ be a positive integer and $\mc\subset (0,1]$ a subset such that $\mc\cup\{1\}$ attains the minimum, denoted by $c$. Then there exist positive numbers $a_n(\mc)$ and $b_n(\mc)$, depending on $n$ and $c$, such that for any $(X, \Delta)\in \fP_n(\mc)$ the following inequality holds
			\[
			\vol(K_X +\Delta) \geq a_n(\mc) p_g(X, \Delta) -b_n(\mc)
			\]
		\end{thm}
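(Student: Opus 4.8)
The plan is to induct on the dimension $n$, reducing at each stage to a statement about the linear system associated to the ``geometric-genus divisor'' $N:=K_X+\lfloor\Delta\rfloor$, whose sections compute $p_g(X,\Delta)=h^0(X,N)$. At the outset one discards the pairs with $p_g(X,\Delta)=0$, for which the desired inequality holds vacuously as soon as $b_n(\mc)>0$, and freely passes to a log resolution, which changes neither $\vol(K_X+\Delta)$ nor $p_g$. The base case $n=1$ is a direct Riemann--Roch computation on a curve $C$ of genus $g$: writing $\Delta=\Delta_{=1}+\Delta_{<1}$ according to whether a coefficient equals $1$, one has $p_g=h^0(K_C+\Delta_{=1})=g-1+\deg\Delta_{=1}$ while $\vol(K_X+\Delta)=2g-2+\deg\Delta\ge g-1+\deg\Delta_{=1}$, giving $\vol\ge p_g-1$.

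For the inductive step I would study the map $\phi_{|N|}\colon X\dashrightarrow\PP^{p_g-1}$ on a resolution, writing $\mu^*N=M+F$ with $M$ base-point free and $F\ge 0$, and set $d=\dim\phi(X)$. The decisive dichotomy is whether $d=n$ or $d<n$. When $\phi$ is generically finite ($d=n$), its image is a nondegenerate $n$-fold in $\PP^{p_g-1}$, hence of degree at least $p_g-n$; since $M\le N\le K_X+\Delta$ and $\vol$ is monotone, this yields the clean bound $\vol(K_X+\Delta)\ge\vol(N)\ge M^n\ge p_g-n$, i.e.\ the theorem with $a_n=1$, $b_n=n$. When $d<n$ one has $M^n=0$, so all of the positivity must come from the fractional part $\{\Delta\}$; equivalently $N$ fails to be big, and this is exactly where the hypothesis that the nonzero coefficients are bounded below by $c$ enters, since each component of $\{\Delta\}$ carries coefficient $\ge c$.

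In this fibration case I would cut down by a general member $H\in|M|$ and use adjunction, $(K_X+\Delta+H)|_H\sim_{\QQ}K_H+\Delta_H$, to produce an $(n-1)$-dimensional log canonical pair $(H,\Delta_H)$ whose boundary coefficients lie in the adjunction set $D(\mc)$; the key point is that $D(\mc)\cup\{1\}$ again attains its minimum and that this minimum is $\ge\min(c,\tfrac12)$, a constant depending only on $c$, so the inductive hypothesis applies with uniformly controlled constants. A section-counting argument gives $p_g(H,\Delta_H)\ge p_g(X,\Delta)-1$, and the induction hypothesis then bounds $\vol_{n-1}(K_H+\Delta_H)$ linearly from below in $p_g$. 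The hard part will be converting this lower bound on the restricted volume back into a lower bound on $\vol(K_X+\Delta)$: because $K_X+\Delta$ is big but not nef, the naive inequality $\vol(K_X+\Delta)\ge\vol_{n-1}((K_X+\Delta)|_H)$ is not automatic and must be justified through movable (restricted-volume) intersection estimates or by passing to a suitable nef model, and one must ensure that the accumulated volume grows \emph{proportionally} to $p_g$ rather than merely staying positive. I expect this volume-comparison step, together with the bookkeeping needed to keep the minimal coefficient — and hence the constants $a_n(\mc),b_n(\mc)$ — dependent only on $n$ and $c$ throughout the induction, to be the main obstacle; the case where $N$ is not big, so that the entire positive contribution to the volume is supplied by the boundary, is the genuinely new feature absent from the classical smooth Noether inequality.
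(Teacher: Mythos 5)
Your overall skeleton (induction on $n$, base case via Riemann--Roch, cutting by a general member $H$ of the movable part of $|K_X+\lfloor\Delta\rfloor|$ and applying adjunction on $H$) matches the paper's proof, and your treatment of the generically finite case via the classical degree bound $M^n\ge p_g-n$ is a clean shortcut the paper does not need. But there are two genuine gaps. The first is the one you yourself flag and then leave open: the comparison $\vol(K_X+\Delta)\gtrsim\vol(K_H+\Delta_H)$. This is not a bookkeeping step to be ``justified through movable intersection estimates'' --- it is the technical heart of the appendix. The paper proves it as Lemma~\ref{lem: ind}, with the explicit constant $2^{1-n}$, by perturbing $\Delta$ to klt $\QQ$-divisors $\Delta^{(s)}$, running an MMP for $(X,\Delta^{(s)}+H)$ to reach a dlt minimal model, and establishing a pluri-log-canonical extension statement (Lemma~\ref{lem: ext}) via Kawamata--Viehweg vanishing on that model, so that the volume of $K_H+\Delta_H$ is computed by an honest nef intersection number $q_s^*(K_{X_s}+\Delta_s+H_s)^{n-1}\cdot\tilde H_s$. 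Without supplying this lemma or an equivalent restricted-volume argument, the inductive step does not close.

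The second gap is in your case $d<n$ when $d=1$: there $H$ is disconnected with $k\ge p_g(X,\Delta)-1$ components, each of which may have $p_g(H_i,\Delta_{H_i})=1$, so the inductive linear bound only yields $\sum_i\bigl(a_{n-1}\cdot 1-b_{n-1}\bigr)$, which need not grow with $p_g$ and can even be negative. The paper instead invokes the descending chain condition for volumes (Hacon--McKernan--Xu) to obtain a uniform positive constant $v_{n-1}^+(\{c,1\})>0$ bounding below the volume of every $(n-1)$-dimensional pair with positive $p_g$ and coefficients in $\{c,1\}$, and multiplies it by the number of components. This --- not the observation that each coefficient of $\{\Delta\}$ is at least $c$ --- is where the hypothesis that $\mc\cup\{1\}$ attains its minimum is actually used: it allows one to push all coefficients into the finite set $\{c,1\}$ so that DCC applies. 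Relatedly, your heuristic for the fibered case is off: $K_X+\Delta$ is big by hypothesis in all cases, and when $M$ is not big the missing positivity may sit in the fixed part of $|K_X+\lfloor\Delta\rfloor|$ or in $K_X$ itself (e.g.\ $\Delta=0$ with $|K_X|$ composed with a pencil), not only in the fractional part of $\Delta$.
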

		
		We make some preparation before proving Theorem~\ref{thm: Noether}.  First we prove a lemma on the extension of pluri-log-canonical sections:
		\begin{lem}\label{lem: ext}
			Let $(X, \Delta + H)$ be a divisorial log terminal pair with $\QQ$-coefficients such that $\lfloor\Delta\rfloor =0$ and $H$ is a reduced smooth divisor.  Suppose that $K_X+ \Delta + H$ is big and no component of $H$ is contained in the stable base locus of $K_X+ \Delta + H$.
			Then the natural restriction map
			\[
			r_m\colon H^0(X, m(K_X+\Delta +H)) \rightarrow H^0(H, m(K_H +\Delta_H ))
			\]
			is surjective for any sufficiently divisible positive integer $m\geq 2$, where $\Delta_H = \Delta|_H$ be restriction of $\Delta$ to $H$.
		\end{lem}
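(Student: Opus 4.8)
The plan is to recognize this as an instance of the pluri-log-canonical extension (restriction surjectivity) theorem and to massage the hypotheses into a form where such a result applies. First I would record the adjunction setup. Since $(X, \Delta + H)$ is dlt with $\lfloor \Delta + H\rfloor = H$ and $\lfloor \Delta\rfloor = 0$, and $H$ is smooth, the pair is in fact plt; by adjunction one has $(K_X + \Delta + H)|_H = K_H + \Delta_H$ with $\Delta_H = \Delta|_H$ and $(H, \Delta_H)$ klt. Thus the target $H^0(H, m(K_H + \Delta_H))$ is genuinely the restriction of $|m(K_X + \Delta + H)|$ to $H$, and the content of the lemma is exactly that this restriction loses no sections for $m$ sufficiently divisible (so that $m\Delta$ and $mH$ are integral and $m(K_X+\Delta+H)$ is Cartier).

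Next, because $H$ is smooth and reduced, its connected components are disjoint smooth prime divisors $H = \bigsqcup_i H_i$, so restriction splits as $H^0(H, \cdot) = \bigoplus_i H^0(H_i, \cdot)$, and the hypothesis that no component of $H$ lies in the stable base locus $\mathbf{B}(K_X + \Delta + H)$ passes to each $H_i$. Using $H_j|_{H_i}=0$ for $j\neq i$, it therefore suffices to prove surjectivity of $H^0(X, m(K_X + \Delta + H)) \to H^0(H_i, m(K_{H_i} + \Delta_{H_i}))$ for each $i$ separately. This reduces us to the case where the reduced boundary $S := H_i$ is a single prime divisor, $(X, \Delta + S)$ is plt, $K_X + \Delta + S$ is big, and $S \not\subset \mathbf{B}(K_X + \Delta + S)$. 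In this prime case I would invoke the extension theorem for plt pairs, in the spirit of Siu's invariance of plurigenera and in the algebraic form due to Hacon--McKernan and Takayama (see also Ein--Popa): under precisely these hypotheses every section of $m(K_S + \Delta_S)$ extends to one of $m(K_X + \Delta + S)$ for all sufficiently divisible $m$. Recombining over the $H_i$ gives the surjectivity of $r_m$.

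The main obstacle is the extension step itself. The naive attempt is to use the short exact sequence $0 \to \cO_X(L - H) \to \cO_X(L) \to \cO_H(L|_H) \to 0$ with $L = m(K_X + \Delta + H)$ and to deduce surjectivity from the vanishing of $H^1(X, L - H)$, where a direct computation gives $L - H = K_X + (m-1)(K_X + \Delta + H) + \Delta$. Kawamata--Viehweg vanishing would close the argument if $K_X + \Delta + H$ were nef, but here it is only big, so this vanishing is unavailable and the elementary approach breaks down. It is exactly this passage from a nef to a merely big log-canonical class that the extension theorem bridges, with the condition that $H$ avoid the stable base locus serving as the hypothesis that makes the extension possible. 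A final point requiring attention is uniformity: one must ensure that the ``sufficiently divisible'' threshold supplied by the extension theorem can be taken common to the finitely many components $H_i$, so that a single $m$ works for the whole of $H$.
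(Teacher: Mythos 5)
Your overall strategy---recognizing the statement as an instance of the pluri-log-canonical extension theorem of Hacon--McKernan and Takayama---is a legitimate alternative to the paper's argument, but there is a concrete logical gap in your reduction step. You decompose $H=\bigsqcup_i H_i$ into connected components and assert that it suffices to prove surjectivity of $H^0(X, m(K_X+\Delta+H))\rightarrow H^0(H_i, m(K_{H_i}+\Delta_{H_i}))$ for each $i$ separately. This is false: surjectivity onto each direct summand of $\bigoplus_i H^0(H_i,\cdot)$ does not imply surjectivity onto the direct sum (consider the diagonal map $k\to k\oplus k$). To realize an arbitrary tuple of sections, one component per $H_i$, you would need each section on $H_i$ to extend to a section on $X$ that moreover \emph{vanishes} along all $H_j$ with $j\neq i$, which is not what the prime-divisor extension theorems provide. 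This matters for the paper's application: in the proof of Theorem~A.2 the divisor $H$ has many connected components, and the volume computation in Lemma~\ref{lem: ind} genuinely requires the joint surjectivity. The fix is either to invoke a version of the lifting lemma stated for reducible $\lfloor\Delta+H\rfloor$ (such versions exist, with the hypothesis that no component lies in the stable base locus), or to produce the vanishing $H^1\bigl(X, m(K_X+\Delta+H)-H\bigr)=0$, which kills the obstruction for all components simultaneously.

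The latter is exactly what the paper does, by a different and more self-contained route: it perturbs $\Delta+H$ into a klt boundary $D$ with $K_X+D\sim_\QQ(1+\epsilon_2)(K_X+\Delta+H)$ (using bigness and the base-locus hypothesis to write $K_X+\Delta+H\sim_\QQ A+B$ with $A$ ample and $H\not\subset\Supp B$), runs a BCHM minimal model program to reach a model where $K_{\bar X}+\bar\Delta+\bar H$ is nef, checks that no component of $H$ is contracted (this is the \emph{second} use of the stable-base-locus hypothesis, via a discrepancy computation), applies Kawamata--Viehweg vanishing to $(m-1)(K_{\bar X}+\bar\Delta+\bar H)$ there, and transports the surjectivity back through a common log resolution; ``sufficiently divisible'' then just means $m(K_{\bar X}+\bar\Delta+\bar H)$ is Cartier. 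Your route is salvageable, but you should also be more careful with the hypotheses of the extension theorems you cite: they are typically stated for log smooth (or at least smooth ambient) plt pairs whose boundary contains a general ample $\QQ$-divisor, so the same perturbation and a passage to a log resolution are needed before they literally apply; they do not hold ``under precisely these hypotheses'' for an arbitrary dlt pair. Your identification of why the naive exact-sequence argument fails (bigness versus nefness) is correct and is indeed the crux that both approaches must address.
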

		\begin{proof}
			The proof is similar to the one sketched in \cite[Remark~2.5]{CJ17}.   Since $K_X+ \Delta + H$ is big and $H$ is not contained in the stable base locus of $K_X+ \Delta + H$,  there are an ample $\QQ$-divisor $A$ and an effective $\QQ$-divisor $B$ such that $K_X+\Delta+H\sim_\QQ A+B$ and $H$ is not contained in the support of $B$.  For $0<\epsilon_1\ll \epsilon_2\ll 1$, we may choose an effective $\QQ$-divisor
			\[
			D \sim_\QQ (1-\epsilon_1)(\Delta+ H) + (\epsilon_1 (\Delta+ H) +\epsilon_2 A) + \epsilon_2 B
			\]
			so that $(X, D)$ is klt. By \cite{BCHM10}, there is a minimal model program yielding a birational contraction $\pi\colon (X, D) \dashrightarrow (\bar X, \bar D)$ onto a minimal model $(\bar X, \bar D)$ of $(X, D)$.
			
			Now observe that $K_X + D \sim_\QQ (1+\epsilon_2)(K_X+\Delta + H)$, so $\pi$ yields also a (divisorial log terminal) minimal model $(\bar X, \bar\Delta + \bar H)$ of $(X, \Delta+H)$, where $\bar \Delta = \pi_*\Delta$ and $\bar H =\pi_*H$; see \cite[3.31]{KM98}.
			
			We claim that no component of $H$ is contracted by $\pi$. Otherwise, suppose that a component $H_i$ of $H$ is contracted by $\pi$. Then the discrepancy of $H_i$ with respect to $(\bar X, \bar\Delta+\bar H)$ is larger than $-1$. It follows that $H_i$ is contained in the stable base locus of $K_X+\Delta+H$, which is a contradiction to the assumption.
			
			The Kawamata--Viehweg vanishing theorem, applied to the big and nef divisor $(m-1)(K_{\bar X} +\bar\Delta +\bar H)$ on the Kawamata log terminal pair $(\bar X,\bar\Delta)$, gives the vanishing $H^1(X,  m(K_{\bar X}+\bar\Delta+\bar H)-\bar H)=0$ for any positive integer $m\geq 2$ such that $m(K_{\bar X}+\bar\Delta+\bar H)$ is Cartier. By the long exact sequence of cohomology groups, the natural restriction map $\bar r_m\colon H^0(\bar X, m(K_{\bar X}+\bar\Delta +\bar H)) \rightarrow H^0(\bar H, m(K_{\bar H} +\bar \Delta_{\bar H}))$ is surjective, where $\bar \Delta_{\bar H} := \bar \Delta|_{\bar H}$.
			
			By going to a common log resolution of $(X, \Delta+H)$ and $(\bar X, \bar \Delta+\bar H)$, one sees easily that there is a commutative diagram
			\[
			\begin{tikzcd}
			H^0(\bar X, m(K_{\bar X}+\bar\Delta +\bar H)) \arrow[r, "\bar r_m"]  \arrow[d, "\cong"'] &  H^0(\bar H, m(K_{\bar H} +\bar \Delta_{\bar H})) \arrow[d, "\cong"]\\
			H^0(X, m(K_X+\Delta +H)) \arrow[r, " r_m"]  &  H^0(H, m(K_H +\Delta_H ))
			\end{tikzcd}
			\]
			where the vertical arrows are isomorphisms induced by $\pi\colon X\dashrightarrow \bar X$ and $\pi|_H\colon H\dashrightarrow \bar H$. Now the surjectivity of $r_m$ follows from that of $\bar r_m$.
		\end{proof}
		
		\begin{lem}\label{lem: ind}
			Let $(X,\Delta)$ be a projective log canonical pair of dimension $n$ such that $X$ is smooth and $K_X+\Delta$ is big. Suppose that the linear system $|K_X+\lfloor \Delta \rfloor |$ has positive dimension and its movable part $\Mov|K_X+\lfloor \Delta \rfloor |$ is base point free. Let $H\in \Mov|K_X+\lfloor \Delta \rfloor |$ be a general element, and $\Delta_H := \Delta|_H$ the restriction of $\Delta$ to $H$. Then
			\[
			\vol(K_X+\Delta) \geq 2^{1-n}\vol(K_H +\Delta_H ).
			\]
		\end{lem}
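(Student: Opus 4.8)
The plan is to reduce the asserted $n$-dimensional inequality to a single extension statement on $H$ together with the log-concavity of restricted volumes, so that the factor $2^{1-n}$ drops out numerically. Write $L := K_X+\Delta$. Decomposing $|K_X+\lfloor\Delta\rfloor|$ into its movable and fixed parts and choosing $H\in\Mov|K_X+\lfloor\Delta\rfloor|$ general, Bertini makes $H$ smooth and irreducible, while $K_X+\lfloor\Delta\rfloor\sim H+(\text{fixed part})$ shows that $L-H=(\text{fixed part})+\{\Delta\}$ is effective; in particular $L-H$ is pseudo-effective and $0\le H\le L$. Set $D:=L+H=K_X+\Delta+H$. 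By adjunction on the smooth divisor $H$ one has $D|_H=(K_X+H+\Delta)|_H=K_H+\Delta_H$, so that $\vol(K_H+\Delta_H)=\vol_H(D|_H)$.

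First I would record two elementary consequences of $0\le H\le L$. On the one hand $D=L+H\le 2L$, whence by monotonicity of the volume $\vol(D)\le\vol(2L)=2^n\vol(L)$. On the other hand, writing $D-tH=L+(1-t)H$ and noting that $L-sH=(1-s)L+s(L-H)$ is big for every $s\in[0,1)$ (a positive multiple of the big class $L$ plus the pseudo-effective class $L-H$), the threshold $\mu:=\sup\{t>0 : D-tH \text{ is big}\}$ satisfies $\mu\ge 2$.

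The heart of the argument is the comparison between $\vol(D)$ and $\vol_H(D|_H)$. Here I would invoke the extension of pluri-log-canonical sections (the preceding Lemma~\ref{lem: ext}, proved via a minimal model and Kawamata–Viehweg vanishing): since $D=K_X+\Delta+H$ is big and $H$ is general, hence not contained in the stable base locus, the restriction $H^0(X,mD)\to H^0(H,m(K_H+\Delta_H))$ is surjective for all sufficiently divisible $m$. This says exactly that the restricted volume equals the honest volume on $H$, namely $\vol_{X|H}(D)=\vol_H(D|_H)=\vol(K_H+\Delta_H)$. Feeding this into the Lazarsfeld–Mustața integral formula $\vol(D)=n\int_0^{\mu}\vol_{X|H}(D-tH)\,dt$ and using the log-concavity (Brunn–Minkowski) of $t\mapsto\vol_{X|H}(D-tH)^{1/(n-1)}$, which vanishes at $t=\mu$ and equals $\vol_{X|H}(D)^{1/(n-1)}$ at $t=0$, one gets $\vol_{X|H}(D-tH)\ge(1-t/\mu)^{n-1}\vol_{X|H}(D)$, and hence $\vol(D)\ge\mu\,\vol_{X|H}(D)\ge 2\,\vol(K_H+\Delta_H)$.

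Combining the two estimates gives $2^n\vol(L)\ge\vol(D)\ge 2\,\vol(K_H+\Delta_H)$, that is $\vol(K_X+\Delta)\ge 2^{1-n}\vol(K_H+\Delta_H)$, which is the claim; the factor $2^{1-n}$ is seen to come from the width $\mu\ge 2$ in the $H$-direction against the loss $D\le 2L$. The step I expect to be the main obstacle is the equality $\vol_{X|H}(D)=\vol_H(D|_H)$: everything else is formal, but this identity is precisely what fails for a general big class, and making it hold is exactly the role of the log-canonical Kawamata–Viehweg input of Lemma~\ref{lem: ext}. A technical point to watch is that Lemma~\ref{lem: ext} is stated for a divisorial log terminal pair with $\lfloor\Delta\rfloor=0$, so when $\lfloor\Delta\rfloor\neq 0$ one must first absorb the reduced components of $\lfloor\Delta\rfloor$ into the boundary alongside $H$ (or pass to a dlt modification) before the extension theorem can be applied.
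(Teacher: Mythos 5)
Your argument reaches the right inequality but by a genuinely different route. The paper perturbs $\Delta$ to a $\QQ$-divisor $\Delta^{(s)}$ with $\lfloor\Delta^{(s)}\rfloor=0$, runs an MMP on $(X,\Delta^{(s)}+H)$, applies Lemma~\ref{lem: ext} on the resulting minimal model to identify $\vol(K_H+\Delta_H^{(s)})$ with the intersection number $q_s^*(K_{X_s}+\Delta_s+H_s)^{n-1}\cdot \tilde H_s$, and extracts the factor $2-\frac{1}{s}$ from the nefness of $K_{X_s}+\Delta_s+H_s$ together with $K_{X_s}+\Delta_s+H_s\geq(2-\frac{1}{s})H_s$, letting $s\to\infty$ at the end. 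You replace the MMP/intersection-theoretic middle step by the Boucksom--Favre--Jonsson/Lazarsfeld--Musta\c{t}\u{a} differentiation formula for volumes and the Ein--Lazarsfeld--Musta\c{t}\u{a}--Nakamaye--Popa log-concavity of restricted volumes. Both arguments obtain $2^{1-n}$ from the same two facts, namely $K_X+\Delta+H\leq 2(K_X+\Delta)$ and ``width at least $2$ in the $H$-direction,'' and both rest on Lemma~\ref{lem: ext} at the crucial extension step; yours is shorter once the restricted-volume machinery is granted, at the price of importing it. Two points you should make explicit: the differentiation formula needs each component of $H$ to lie outside $\mathbf{B}_+(D-tH)$ for $t<\mu$ (true here because $H$ moves in a base-point-free system, but it must be said), and $H$ is \emph{not} irreducible in general --- Bertini only gives smoothness, and the disconnected case genuinely occurs in the application to Theorem~\ref{thm: Noether} --- so all restricted-volume statements must be summed over the connected components of $H$ (harmless, by additivity).

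The one genuine gap is your reduction to Lemma~\ref{lem: ext}. That lemma requires a dlt pair with $\QQ$-coefficients and $\lfloor\Delta\rfloor=0$, whereas the $\Delta$ of the present lemma is an $\RR$-divisor which may have reduced components (in the application its coefficients lie in $\{c,1\}$ with $c$ possibly irrational, so ``sufficiently divisible $m$'' does not even parse). Your proposed fix --- absorbing $\lfloor\Delta\rfloor$ into the ``$H$''-slot of Lemma~\ref{lem: ext} --- does not work: that slot must be occupied by a \emph{smooth} divisor, while $H+\lfloor\Delta\rfloor$ will in general have intersecting components; the conclusion would then be surjectivity onto sections over all of $H+\lfloor\Delta\rfloor$ rather than over $H$ alone; and the proof of Lemma~\ref{lem: ext} uses Kawamata--Viehweg vanishing for the \emph{klt} pair $(\bar X,\bar\Delta)$, which is lost once $\bar\Delta$ acquires coefficient-one components. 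A dlt modification does not help either, since the obstruction is the presence of reduced components, not the singularities. The correct repair is exactly the paper's: replace $\Delta$ by rational $\Delta^{(s)}\leq\Delta$ with $\lfloor\Delta^{(s)}\rfloor=0$ and $\Delta^{(s)}\to\Delta$, run your argument for each $s$ with $D_s=K_X+\Delta^{(s)}+H$ (noting that the bigness threshold of $D_s-tH$ is only $\geq 2-\frac{1}{s}$, since $K_X+\Delta^{(s)}-H$ need no longer be effective), and pass to the limit using continuity of the volume. With that modification your proof goes through.
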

		\begin{proof}
			Since $\Mov|K_X+\lfloor \Delta \rfloor |$ is a positive dimensional base-point-free linear system, its general element $H$ is a (nonzero) smooth divisor by the Bertini theorem.
			
			Since $(X,\Delta)$ is a projective log canonical pair such that $X$ is smooth and $K_X+\Delta$ is big, we can decrease the coefficients of $\Delta$ slightly to obtain a $\QQ$-divisor $\Delta^{(s)} $  for each positive integer $s$ such that the following holds.
			\begin{enumerate}
				\item  $(K_X, \Delta^{(s)} + H)$ is a divisorial log terminal pair with $\QQ$-coefficients such that $\lfloor\Delta^{(s)} \rfloor =0$.
				\item $ K_X+\Delta^{(s)}$ is big, and  $K_X+\Delta^{(s)} - (1-\frac{1}{s})H$ is $\QQ$-linearly equivalent to an effective $\QQ$-divisor.
				\item $\lim_{s\rightarrow \infty} \Delta^{(s)} = \Delta$.
			\end{enumerate}
			By \cite{BCHM10}, there is a minimal model program yielding a birational contraction 
			\[
			\pi_s\colon (X, \Delta^{(s)} +H)\dashrightarrow (X_s, \Delta_s + H_s),
			\]
			where $\Delta_s = \pi_{s*}\Delta^{(s)}$ and $H_s=\pi_{s*} H$, such that $ (X_s, \Delta_s+H_s)$ is a minimal model of $(X, \Delta^{(s)}+H)$ with divisorial log terminal singularities. Resolving the  indeterminacy of  $\pi_s$, we obtain the following commutative diagram of birational maps:
			\[
			\begin{tikzcd}
			& \tilde X_{s} \arrow[ld, "p_s"'] \arrow[rd, "q_s"]& \\
			X \arrow[rr, dashed, "\pi_s"] && X_s
			\end{tikzcd}
			\]
			where $p_s$ and $q_s$ are log resolutions of $(X, \Delta^{(s)} +H )$ and $(X_s, \Delta_s +H _s)$ respectively.
			
			Denote by $\tilde H_s $ be the strict transform of $H$ on $\tilde X_s$. There is a uniquely determined divisor $\tilde \Delta_s$ on $\tilde X_s$ such that $p_s^*(K_X+\Delta^{(s)} + H) = K_{\tilde X_s}+\tilde \Delta_s + \tilde H_s $ and $p_{s*}\tilde \Delta_s =\Delta^{(s)}$.  Let $\Delta_{\tilde H_s}$ be the restriction of $\tilde \Delta_s^{>0}$ to $\tilde H_s $, and $\Delta_H^{(s)}$ the restriction of $\Delta^{(s)}$ to $H$. We claim that
			\begin{equation}\label{eq: equal vol}
			\vol\left(K_H+\Delta_H^{(s)}\right) = \vol(K_{\tilde H_s}+ \Delta_{\tilde H_s}).
			\end{equation}
			In fact, since $p_{s*}\Delta_{\tilde H_s} = \Delta_H^{(s)}$, one has $\vol(K_H+\Delta_H^{(s)}) \geq \vol(K_{\tilde H_s}+ \Delta_{\tilde H_s})$. On the other hand,
			\begin{multline*}
			p_s^*(K_H+\Delta_H^{(s)}) = p_s^*(K_X+H+\Delta^{(s)})|_{\tilde H_s} = (K_{\tilde X_s}+\tilde H_s+\tilde \Delta_s)|_{\tilde H_s} \\
			\leq  (K_{\tilde X_s}+\tilde H_s+\tilde \Delta_s^{>0})|_{\tilde H_s} = K_{\tilde H_s}+ \Delta_{\tilde H_s}
			\end{multline*}
			and thus $\vol(K_H+\Delta_H^{(s)})  = \vol(p_s^*(K_H+\Delta_H^{(s)}))\leq \vol(K_{\tilde H_s}+ \Delta_{\tilde H_s})$.
			
			For sufficiently large and divisible integer $m$, consider the commutative diagram
			\begin{equation}\label{eq: surj}
			\begin{tikzcd}
			H^0(\tilde X_s, mq_s^*(K_{X_s}+\Delta_s +H_s)) \arrow[r, "r_{s,m}'"] \arrow[d, "\cong"', "\varphi_{s,m}"]&  H^0(\tilde H_s, mq_s^*(K_{X_s}+\Delta_s +H_s)|_{\tilde H_s}) \arrow[d, hook, "\lambda_{s,m}"]\\
			H^0(\tilde X_s, m(K_{\tilde X_s}+\tilde \Delta_s^{>0} + \tilde H_s)) \arrow[r, twoheadrightarrow, " r_{s,m}"] &  H^0(\tilde H_s, m(K_{\tilde X_s}+\tilde \Delta_s^{>0} + \tilde H_s)|_{\tilde H_s}) \\
			\end{tikzcd}
			\end{equation}
			where the horizontal arrows are restriction maps and the vertical maps are induced by inclusions of sheaves and hence injective. Since $(X_s, \Delta_s +H_s)$ is a minimal model of $(\tilde X_s, \tilde \Delta_s^{>0} + \tilde H_s)$, the map $\varphi_{s,m}$ is an isomorphism; the restriction map $r_{s,m}$ is surjective by Lemma~\ref{lem: ext}. It follows that $\lambda_{s,m}$ is surjective and hence an isomorphism. Letting $m$ go to infinity, we obtain
			\begin{multline}\label{eq: vol K_H}
			\vol(K_{\tilde H_s} + \Delta_{\tilde H_s}) = \vol((K_{\tilde X_s}+\tilde \Delta_s^{>0} + \tilde H_s)|_{\tilde H_s}) \\
			= \vol (q_s^*(K_{X_s}+\Delta_s +H_s)|_{\tilde H_s}) = q_s^*(K_{X_s}+\Delta_s +H_s)^{n-1}\cdot \tilde H_s
			\end{multline}
			where the second equality is due to the isomorphisms $\lambda_{s,m}$ and the last equality is because of the nefness of $q_s^*(K_{X_s}+\Delta_s +H_s)|_{\tilde H_s}$.
			
			Since $K_X+\Delta \geq H$ and $K_X+\Delta \geq K_{X} +\Delta^{(s)}$ by construction, we have
			\begin{equation}\label{eq: 2K}
			2(K_X+\Delta)  \geq K_{X} +\Delta^{(s)} + H
			\end{equation}
			Also, since $K_{X} +\Delta^{(s)}\geq (1-\frac{1}{s}) H$, we have
			\begin{equation}\label{eq: K+H}
			K_{X_s} + \Delta_s + H_s = \pi_{s*} (K_X+\Delta^{(s)}+H) \geq \left(2-\frac{1}{s}\right) \pi_{s*} H = \left(2-\frac{1}{s}\right) H_s.
			\end{equation}
			Now we can compute the volume
			\begin{align*}
			2^n\vol(K_X+\Delta) &=\vol(2(K_X+\Delta)) \\
			&\geq \vol(K_X+\Delta^{(s)}+H) \hspace{.5cm}\text{(by \eqref{eq: 2K})}\\
			& =\vol(K_{X_s}+\Delta_s+ H_s) \\
			&=(K_{X_s}+\Delta_s+ H_s)^n  \hspace{.5cm}\text{(since $K_{X_s}+\Delta_s +H_s$ is nef)}\\
			&\geq (K_{X_s}+\Delta_s+ H_s)^{n-1}\cdot \left(2-\frac{1}{s}\right) H_s \hspace{.5cm}\text{(by \eqref{eq: K+H})} \\
			&= \left(2-\frac{1}{s}\right)  (q_s^*(K_{X_s}+\Delta_s +H_s))^{n-1}\cdot q_s^*H_s  \\
			& \geq \left(2-\frac{1}{s}\right) (q_s^*(K_{X_s}+\Delta_s +H_s))^{n-1}\cdot\tilde H_s \hspace{.5cm}\text{(since $q_s^*H_s \geq \tilde H_s$)}\\
			&  = \left(2-\frac{1}{s}\right)\vol(K_{\tilde H_s}  +\Delta_{\tilde H_s} ) \hspace{.5cm}\text{(by \eqref{eq: vol K_H})} \\
			&= \left(2-\frac{1}{s}\right)\vol(K_H +\Delta_H^{(s)} ) \hspace{.5cm}\text{(by \eqref{eq: equal vol})}
			\end{align*}
			Since $\lim_{s\rightarrow\infty} \Delta^{(s)}_H=\Delta_H$, we obtain
			\[
			\vol(K_X+\Delta)  \geq 2^{-n}\lim_{s\rightarrow\infty} \left(2-\frac{1}{s}\right)\vol\left(K_H +\Delta_H^{(s)} \right)  \geq 2^{1-n}\vol(K_H +\Delta_H).
			\]
		\end{proof}

		\begin{proof}[Proof of Theorem~\ref{thm: Noether}]
			By ensuring that $b_n(\mc)\geq a_n(\mc)$, the inequality of the theorem is trivially true in the case $p_g(X, \Delta)\leq 1$. We can thus assume that $p_g(X, \Delta)\geq 2$.
			
			We proceed by induction on the dimension $n$.
			
			If $n=1$ then $X$ is a smooth projective curve, and, using the Riemann--Roch theorem,
			\[
			\vol(K_X +\Delta) = \deg (K_X +\Delta) \geq \deg (K_X +\lfloor\Delta\rfloor) \geq h^0(X, K_X +\lfloor\Delta\rfloor) -1
			\]
			where the last inequality is an equality if and only if $g(X)=0$. Thus we can take $a_1(\mc) =b_1(\mc) = 1$ for any $\mc\subset (0,1]$.
			
			From now on, we assume that $n\geq 2$ and that the theorem has been proven in lower dimensions. Let $(X, \Delta)$ be an $n$-dimensional projective log canonical pair of general type with coefficients of $\Delta$ belonging to $\mc$.
			
			First we assume that $1\in \mc$. Let $\rho\colon \tilde X\rightarrow X$ be a log resolution of $(X, \Delta)$ such that the movable part $\Mov|K_{\tilde X}+\lfloor\tilde \Delta^{>0}\rfloor|$ of the linear system $|K_{\tilde X}+\lfloor\tilde \Delta^{>0}\rfloor|$ is base point free, where $\tilde \Delta$ is the divisor on $\tilde X$ satisfying $K_{\tilde X} + \tilde \Delta = \rho^*(K_X+\Delta)$ and $\rho_*\tilde\Delta =\Delta$, and $\tilde \Delta^{>0}$ denotes the effective part of $\tilde \Delta$. Let $\tilde \Delta'$ be the divisor on $\tilde X$, obtained from $\tilde \Delta^{>0}$ by raising the coefficients not in $\mc$ to 1 and then decreasing those coefficients in $\mc\cap(0,1)$ (if any) to $c=\min (\mc\cup\{1\})$. Then $(\tilde X, \tilde \Delta')\in\fP_n(\{c,1\})$, and  one sees easily that
			\[
			0<\vol(K_{\tilde X}+ \tilde \Delta') \leq \vol(K_X+\Delta) \text{ and } p_g(\tilde X, \tilde \Delta') = p_g(X, \Delta).
			\]
			Replacing $(X, \Delta)$ with $(\tilde X, \tilde \Delta')$, we can assume that $X$ is smooth, $\Delta$ is a simple normal crossing divisor with coefficients lying in $\{c,1\}$, and $\Mov|K_X+\lfloor \Delta\rfloor|$ is base point free.
			
			Let $H\in \Mov|K_X+\lfloor \Delta\rfloor|$ be a general member. Then $H$ is smooth and $H+\Delta$ is simple normal crossing by the Bertini theorem. Let $\Delta_H$ be the restriction of $\Delta$ to $H$.

			Let $f\colon X\rightarrow \PP^{p_g-1}$ denote the morphism defined by $| K_X+\lfloor \Delta\rfloor|$, where $p_g = p_g(X, \Delta)$.
			In order to bound $\vol(K_X +\Delta)$ from below in terms of $p_g(X, \Delta)$, we have to distinguish two cases depending on $\dim f(X)$.
			
			\medskip
			
			\noindent{\bf Case $\dim f(X)\geq 2$.} In this case, $H$ is connected. Then $(H, \Delta_H)\in\fP_{n-1}(\{c,1\})\subset \fP_{n-1}(\mc)$, and hence by induction there are constants $a_{n-1}(\mc)$ and $b_{n-1}(\mc)$ such that
			\begin{equation}\label{eq: n-1}
			\vol(K_H+\Delta_H)\geq a_{n-1}(\mc)p_g(H, \Delta_H) - b_{n-1}(\mc)
			\end{equation}
			Since $p_g(H, \Delta_H)\geq p_g(X, \Delta) -1$, we have by Lemma~\ref{lem: ind} and \eqref{eq: n-1}:
			\begin{equation}\label{eq: case1}
			\begin{split}
			\vol(K_X+\Delta)  &\geq 2^{1-n}\vol(K_H+\Delta_H) \\
			&\geq 2^{1-n}\left(a_{n-1}(\mc)(p_g(X, \Delta)-1) - b_{n-1}(\mc)\right) \\
			& = 2^{1-n}a_{n-1}(\mc)p_g(X, \Delta) -  2^{1-n}\left(a_{n-1}(\mc) + b_{n-1}(\mc)\right)
			\end{split}
			\end{equation}
			\medskip
			
			\noindent{\bf Case $\dim f(X)=1$.} In this case, the number $k$ of connected components of $H$ is $\deg f(X)\geq p_g(X, \Delta) -1$. Let $H_1, \dots, H_k$ be the connected components of $H$. For each $1\leq i\leq k$,  let $\Delta_{H_i}$ be the restriction of $\Delta$ to $H_i$. Then $(H_i, \Delta_{H_i})\in\fP_{n-1}^+(\{c,1\})$, and $\vol(K_{H_i}+\Delta_{H_i})$ is independent of $i$. We have thus
			\begin{equation}
			\vol(K_H +\Delta_H) = k\vol(K_{H_i}+\Delta_{H_i}) \geq v_{n-1}^+(\{c,1\}) (p_g(X, \Delta) -1).
			\end{equation}
			Combining this with Lemma~\ref{lem: ind}, we obtain
			\begin{equation}\label{eq: case2}
			\vol(K_X+\Delta) \geq 2^{1-n}v_{n-1}^+(\{c,1\}) (p_g(X, \Delta) -1).
			\end{equation}
			
			By \eqref{eq: case1} and $\eqref{eq: case2}$, if we set
			\begin{align*}
			a_n(\mc) &= 2^{1-n} \min\{a_{n-1}(\mc), v_{n-1}^+(\{c,1\})  \}, \\
			b_n(\mc) &=2^{1-n}\max\{a_{n-1}(\mc) + b_{n-1}(\mc),  v_{n-1}^+(\{c,1\}) \}.
			\end{align*}
			then the inequality $\vol(K_X +\Delta) \geq a_n(\mc) p_g(X, \Delta) -b_n(\mc)$ is valid in both cases.
			
			In general, if $1\notin \mc$, since $\fP_n(\mc)\subset \fP_n(\mc\cup\{1\})$, we can simply set $a_n(\mc):=a_n(\mc\cup\{1\})$ and $b_n(\mc):=b_n(\mc\cup\{1\})$.
		\end{proof}
		\begin{rmk}
			The choices of $a_n(\mc)$ and $b_n(\mc)$ in the proof of Theorem~\ref{thm: Noether} are not optimal, but are computable. If the following Question~\ref{qu: min vol} admits a positive answer, then one can see by induction that, for $n\geq 2$,
			\[
			a_n(\mc)= \frac{1}{2^{n-1}}v_{n-1}^+(\{c,1\})
			\]
			 and 
			 \[
			b_n = a_{n-1} + a_{n-2} +\cdots + a_1+b_1 = 2+\sum_{1\leq i\leq n-1}\frac{1}{2^{i-1}}v_{i-1}^+(\{c,1\}).
			\]
		\end{rmk}
		
		\begin{qu}\label{qu: min vol}
			For $n\geq 2$ and $c\in (0,1]$, is it true that $v_{n}^+(\{c,1\}) \leq  \frac{1}{2^{n-1}}v_{n-1}^+(\{c,1\})$?
		\end{qu}
		
\section*{Acknowledgements}
	Part of this work was done when the first author was visiting the Tianyuan Mathematical Center in Southeast China, located at Xiamen University. We would like to thank the TMCSC and Xiamen University for the support and hospitality. We are grateful to the referee for  careful reading and suggestions. The first author was supported by National Key Research and Development Program of China (No.~2020YFA0713200), by the NSFC for Innovative Research Groups (No.~12121001), by the Natural Science Foundation of Shanghai
	(No.~21ZR1404500), and by the NSFC (No.~11871155 and  No.~11731004). The second author was supported by the NSFC (No.~11971399 and No.~11771294) and by the Presidential Research Fund of Xiamen University (No.~20720210006).

\bibliographystyle{alpha}
  \bibliography{arxiv}			
	\end{document}